\newtheorem{lemma}{Lemma}
\newtheorem{theorem}{Theorem}
\newtheorem{conjecture}{Conjecture}
\newtheorem{cor}{Corollary}
\newtheorem{proposition}{Proposition}
\newtheorem{remark}{Remark}
\newcommand{\restr}[2]{\left. #1 \right|_{#2}}
\newcommand{\sgn}{\operatorname{sgn}\nolimits}
\newcommand{\hypoth}[2]{\medskip\noindent{#1}{\em #2}\medskip}
\newcommand{\ddef}[1]{{#1}}
\newcommand{\am}{\operatorname{am}\nolimits}
\newcommand{\der}[2]{\frac{d \, #1}{d \, #2} }
\newcommand{\pder}[2]{\frac{\partial \, #1}{\partial \, #2} }
\newcommand{\sn}{\operatorname{sn}\nolimits}
\newcommand{\cn}{\operatorname{cn}\nolimits}
\newcommand{\dn}{\operatorname{dn}\nolimits}
\newcommand{\E}{\operatorname{E}\nolimits}
\newcommand{\eq}[1]{$(\protect\ref{#1})$}
\newcommand{\be}[1]{\begin{equation}\label{#1}}
\newcommand{\ee}{\end{equation}}
\newcommand{\vect}[1]{\left( \begin{array}{c} #1 \end{array} \right)}
\newcommand{\Id}{\operatorname{Id}\nolimits}
\newcommand{\MAX}{\operatorname{MAX}\nolimits}
\newcommand{\SH}{\operatorname{SH}\nolimits}
\newcommand{\SO}{\operatorname{SO}\nolimits}
\newcommand{\SL}{\operatorname{SL}\nolimits}
\newcommand{\SE}{\operatorname{SE}\nolimits}
\newcommand{\Exp}{\operatorname{Exp}\nolimits}
\newcommand{\Diff}{\operatorname{Diff}\nolimits}
\renewcommand{\Vec}{\operatorname{Vec}\nolimits}
\newcommand{\Sym}{\operatorname{Sym}\nolimits}
\newcommand{\spann}{\operatorname{span}\nolimits}
\newcommand{\ad}{\operatorname{ad}\nolimits}
\newcommand{\map}[3]{#1 \, : \, #2 \to #3}  
\newcommand{\mapto}[3]{#1 \, : \, #2 \mapsto #3}  
\newcommand{\const}{\operatorname{const}\nolimits}
\newcommand{\cl}{\operatorname{cl}\nolimits}
\newcommand{\pdder}[2]{\frac{\partial^2 #1}{\partial \, {#2}^2} }
\def\Ho{$(\mathbf{H1})$} 
\def\Ht{$(\mathbf{H2})$} 
\def\Hth{$(\mathbf{H3})$} 
\def\Hf{$(\mathbf{H4})$} 
\def\tconj{t^1_{\operatorname{conj}}}
\def\tmax{t_{\MAX}^1}
\def\a{\alpha}
\def\tcut{t_{\operatorname{cut}}}
\def\tmax{t_{\MAX}^1}
\def\then{\quad\Rightarrow\quad}
\def\D{\Delta}
\def\d{\delta}
\def\lam{\lambda}
\def\R{{\mathbb R}}
\def\ds{\displaystyle}
\def\eps{\varepsilon}
\def\ta{\widetilde{a}}
\def\tx{\widetilde{x}}
\def\tf{\widetilde{f}}
\def\tg{\widetilde{g}}
\def\tJ{\widetilde{J}}
\def\hg{\widehat{g}}
\def\htt{\widehat{t}}
\def\hlam{\widehat{\lambda}}
\def\hmu{\widehat{\mu}}
\def\bu{\bar{u}}
\def\bk{\bar{k}}
\def\bt{\bar{t}}
\def\blam{\bar{\lambda}}
\def\bth{\bar{\theta}}
\def\bh{\bar{h}}
\def\f{\varphi}
\def\vh{\vec h}
\def\vH{\vec H}
\def\Z{{\mathbb Z}}
\def\a{\alpha}
\def\b{\beta}
\def\p{\psi}
\def\t{\theta}
\def\dh{\dot h}
\def\g{\mathfrak{g}}
\def\ss{\sn p}
\def\cc{\cn p}
\def\dd{\dn p}
\newcommand{\twofiglabelsize}[8]
{
\begin{figure}[htbp]
\hfill
\includegraphics[height=#4cm]{#1}
\hfill
\includegraphics[height=#8cm]{#5}
\hfill
\\
\hfill
\parbox[t]{0.45\textwidth}{\caption{#2}\label{#3}}
\hfill
\parbox[t]{0.45\textwidth}{\caption{#6}\label{#7}}
\hfill
\end{figure}
}
\newcommand{\onefiglabelsize}[4]
{
\begin{figure}[htbp]
\begin{center}
\includegraphics[width=#4\textwidth]{#1}
\\
\parbox[t]{#4\textwidth}{\caption{#2}\label{#3}}
\end{center}
\end{figure}
}
\title
{Conjugate time in sub-Riemannian problem \\on Cartan group\footnote{This work is supported by the Russian Science Foundation 
under grant 17-11-01387-P and performed in Ailamazyan Program Systems Institute 
of Russian Academy of Sciences} \footnote{MSC2010: 49K15, 	53C17, 93C15}
}
\author{Yu. L. Sachkov\\
Program Systems Institute of RAS\\
Pereslavl-Zalessky,	 Russia\\
e-mail: yusachkov@gmail.com}
\begin{document}

\maketitle

\begin{abstract}
The Cartan group is the free nilpotent Lie group of rank 2 and step~3. We consider the left-invariant sub-Riemannian problem on the Cartan group defined by an inner product in the first layer of its Lie algebra.
This problem gives a nilpotent approximation of an arbitrary sub-Riemannian problem with the growth vector $(2,3,5)$. 

In previous works we described a group of symmetries of the sub-Riemannian problem on the Cartan group, and the corresponding Maxwell time --- the first time when symmetric geodesics intersect one another. It is known that geodesics are not globally optimal after the Maxwell time. 

In this work we study local optimality of geodesics on the Cartan group. We prove that the first conjugate time along a geodesic is not less than the Maxwell time corresponding to the group of symmetries. We characterize geodesics for which the first conjugate time is equal to the first Maxwell time. Moreover, we describe continuity of the first conjugate time near infinite values.
\end{abstract}

\newpage
\tableofcontents

\newpage

\section{Introduction}
\label{sec:intro}

\subsection{Problem statement}
This work deals with the nilpotent sub-Riemannian problem  with the growth vector $(2, 3, 5)$.
This problem evolves on the {\em Cartan group}, which is
the connected simply connected free nilpotent Lie group of rank 2 and step 3.

The Lie algebra of the Cartan group is the 5-dimensional nilpotent Lie algebra $\g = \spann(X_1, X_2, X_3, X_4, X_5)$ with the multiplication table
$$
[X_1, X_2] = X_3, \quad [X_1, X_3] = X_4, \quad  [X_2, X_3] = X_5, \quad \ad X_4 = \ad X_5 = 0,
$$
see Fig. \ref{fig:cartan}.

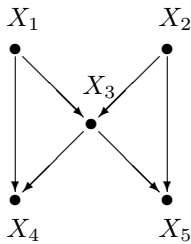
\begin{figure}[htb]
\setlength{\unitlength}{1cm}

\begin{center}
\begin{picture}(4, 4)
\put(1.1, 2.9){ \vector(1, -1){0.8}}
\put(1, 2.9){ \vector(0, -1){1.8}}
\put(2.9, 2.9){ \vector(-1, -1){0.8}}
\put(3, 2.9){ \vector(0, -1){1.8}}
\put(1.9, 1.9){ \vector(-1, -1){0.8}}
\put(2.1, 1.9){ \vector(1, -1){0.8}}

\put(1, 1){ \circle*{0.15}}
\put(1, 3){ \circle*{0.15}}
\put(3, 1){ \circle*{0.15}}
\put(3, 3){ \circle*{0.15}}
\put(2, 2){ \circle*{0.15}}

\put(1, 0.5) {$X_4$}
\put(3, 0.5) {$X_5$}
\put(1, 3.3) {$X_1$}
\put(3, 3.3) {$X_2$}
\put(2, 2.4) {$X_3$}

%\put(1.7, 0) {Fig. 2}
\end{picture}

%{Fig. 5. Lie algebra $\ga$, the Cartan case.}
\caption{ The Cartan Lie algebra $\g$\label{fig:cartan}}

\end{center}
\end{figure}

We consider the sub-Riemannian problem on the Cartan group $G$ for the left-invariant sub-Riemannian structure generated by the orthonormal frame $X_1$, $X_2$:
\begin{align}
&\dot g = u_1 X_1(g) + u_2 X_2(g), \qquad g \in G, \quad u = (u_1, u_2) \in \R^2,  \label{SR1}\\
& g(0) = g_0, \quad g(t_1) = g_1, \label{SR2}\\
& l = \int_0^{t_1} \sqrt{u_1^2 + u_2^2} \, d t \rightarrow \min, \label{SR}
\end{align}
see books~\cite{mont, ABB} as a reference on sub-Riemannian geometry.
 
In appropriate coordinates $g = (x,y,z,v,w)$ on the Cartan group $G \cong \R^5$, the problem 
 is stated as follows:
\begin{align} 
&\dot{g} = \vect{\dot{x} \\ \dot{y} \\ \dot{z} \\ \dot{v} \\ \dot{w}} = u_1 
\vect{1 \\ 0 \\ - \frac{y}{2} \\ 0\\ - \frac{x^2 + y^2}{2}} 
+ u_2 \vect{0 \\ 1 \\ \frac{x}{2} 
\\ \frac {x^2+y^2}{2}\\0}, \quad g \in \R^5, \quad u \in \R^2, \label{pr1}\\
& g(0) = g_0, \quad g(t_1) = g_1, \label{pr2} \\
& l = \int_0^{t_1} \sqrt{u_1^2 + u_2^2} \, d t \rightarrow \min.
\label{pr3}
\end{align}
Admissible trajectories $g(t)$ are Lipschitzian, and admissible controls $u(t)$ are measurable and locally bounded.

Since the problem is invariant under left shifts on the Cartan group, we can assume that the initial point is identity of the group: $g_0  = \Id =  (0, 0, 0, 0, 0)$.

Problem \eq{pr1}--\eq{pr3} has the following geometric model called a generalized Dido's problem.
Take two points $(x_0, y_0), \, (x_1, y_1) \in \R^2$ connected by a curve $\gamma_0 \subset \R^2$, a number $S\in \R$, and a point $c \in \R^2$. One has to find the shortest curve $\gamma \subset \R^2$ that connects the points $(x_0, y_0)$ and $(x_1, y_1)$, such that the domain bounded by $\gamma_0$ and $\gamma$ has algebraic area $S$ and center of mass $c$. 

Optimal control problem \eq{pr1}--\eq{pr3} is the sub-Riemannian (SR)  length minimization problem for the distribution 
\begin{align*}
&\D_g = \spann(X_1(g), X_2(g)), \qquad g \in G, \\
&X_1 = \pder{}{x}  - \frac{y}{2} \pder{}{z}  - \frac 
{x^2+y^2}{2} \pder{}{w},
\qquad
 X_2 = \pder{}{y} + \frac{x}{2} \pder{}{z} + \frac 
{x^2+y^2}{2} \pder{}{v},
\end{align*}
endowed with the inner product $\langle \, \cdot \, , \, \cdot \ \rangle$ in which $X_1$, $X_2$ are orthonormal:
$$
\langle X_i, X_j\rangle = \d_{ij}, \qquad i, j = 1, 2.
$$
The distribution $\D$ has the flag
$$
\D \subset \D^2 = \D + [\D, \D] \subset \D^3 = \D^2 + [\D, \D^2] = T G
$$
and the growth vector $(2,3,5) = (\dim \D_g, \dim \D^2_g, \dim \D^3_g)$, where
\begin{align*}
&\D^2_g = \spann(X_1(g), X_2(g),X_3(g)), \qquad \D^3_g = \spann(X_1(g), \dots,X_5(g)),\\
&X_3 = [X_1,X_2]  = \pder{}{z}
+
x \pder{}{v} + y \pder{}{w}, \\
&X_4 = [X_1, X_3]  =\pder{}{v}, \qquad
X_5 = [X_2, X_3]  =\pder{}{w}.
\end{align*}
Thus $(\D, \langle \, \cdot \, , \, \cdot \ \rangle)$ is a nilpotent SR structure with the growth vector $(2,3,5)$. It is a local quasihomogeneous nilpotent approximation~\cite{agr_sar, bellaiche, ABB} to an arbitrary SR structure on a 5-dimensional manifold with the growth vector $(2,3,5)$. Examples of such structures include models for:
\begin{itemize}
\item 
a pair of bodies rolling one on another without slipping or twisting \cite{cdc99, marigo_bicchi}, in particular, the sphere rolling on a plane \cite{jurd_plate-ball},
\item
a car with 2 off-hooked trailers \cite{laumond, vendit_laum_oriolo},
\item
electric charge moving in a magnetic field \cite{meneses_perez}.
\end{itemize}
Such a nilpotent SR structure is unique, up to homomorphism of the Lie group~$G$. Generalized Dido's problem~\eq{pr1}--\eq{pr3} is a model of the nilpotent SR problem with the growth vector $(2,3,5)$, see other models in \cite{sym, meneses_perez}. 

\medskip

The paper continues the study of this problem started in works~\cite{sym, dido_exp, max1, max2, max3}. The main result of these works is an upper bound of the cut time (i.e., the time of loss of { \em global} optimality) along extremal trajectories of the problem. The aim of this paper is to investigate the first conjugate time (i.e., the time of loss of {\em local} optimality) along the trajectories. We show that the function that gives the upper bound of the cut time provides the lower bound of the first conjugate time. In order to state this main result exactly, we recall necessary facts from the previous works~\cite{sym, dido_exp, max1, max2, max3}.

\subsection{Previously obtained results}\label{subsec:previous}

Problem \eq{pr1}--\eq{pr3} was considered first by R.~Brockett and L.~Dai~\cite{brock_dai}: they proved integrability of geodesics in this problem in Jacobi's functions.

The following results were obtained in~\cite{sym, dido_exp, max1, max2, max3}, if not stated otherwise.

Existence of optimal solutions of problem~\eq{pr1}--\eq{pr3} is implied by the Ra\-shev\-sky-Chow and Filippov theorems~\cite{notes}. 

\subsubsection{Pontryagin maximum principle}
By the Cauchy-Schwartz inequality, the sub-Riemannian length minimization problem~\eq{pr3} is equivalent to the energy minimization problem:
\be{J}
\int_0^{t_1} \frac{u_1^2+u_2^2}{2} \, d t \rightarrow \min.
\ee 
The Pontryagin maximum principle~\cite{PGBM, notes} was applied to the resulting optimal control problem~\eq{pr1}, \eq{pr2}, \eq{J}. 

Abnormal extremal trajectories are one-parameter subgroups $g_t$ tangent to the distribution $\D$:
\be{abnormal}
\dot g = u_1 X_1 + u_2 X_2, \qquad u_1, \ u_2 \equiv \const.
\ee
They project to straight lines in the plane $(x,y)$,  thus are optimal. 

 Normal extremals satisfy the Hamiltonian system
\be{norm_ham}
\dot{\lambda}= \vec{H}(\lambda), \qquad \lambda \in T^* G,
\ee 
where $H = \frac{1}{2}\left(h_1^2+h_2^2\right)$, $h_i(\lam) = \langle\lam, X_i\rangle$.  
In coordinates $(h_1, \dots, h_5; g)$ on $T^*G$, this system reads as
\begin{align}
&\dh_1 = -h_2 h_3, \label{dh1}\\
&\dh_2 = h_1 h_3, \label{dh2}\\
&\dh_3 = h_1 h_4 + h_2 h_5, \label{dh3}\\
&\dh_4 = 0, \label{dh4}\\
&\dh_5 = 0, \label{dh5}\\
&\dot g = h_1 X_1 + h_2 X_2. \label{dq_vH}
\end{align}
Normal extremal controls are $u_1 = h_1$, $u_2 = h_2$.

Since the vertical part \eq{dh1}--\eq{dh5} of this system is homogeneous, we can restrict to the level surface $\{ H = 1/2\}$, which corresponds to length parameterization of extremal trajectories. 

Abnormal extremal trajectories \eq{abnormal} are simultaneously normal.

\subsubsection{Integrability}

There are 3 independent Casimir functions~\cite{ABB} on the Lie coalgebra $\g^*$: the Hamiltonians $h_4$, $h_5$, and 
$$
E = \frac{h_3^2}{2} + h_1 h_5 - h_2 h_4.
$$
 Symplectic leaves of maximal dimension are two-dimensional:
\be{leaf2}
\text{connected components of }
\{p \in \g^* \mid h_4, \ h_5, \ E \equiv \const\},
\ee
thus the vertical subsystem~\eq{dh1}--\eq{dh5} is Liouville integrable. 
The symplectic foliation on $\g^*$ consists of:
\begin{itemize}
\item
two-dimensional parabolic cylinders~\eq{leaf2} in the domain $\{h_4^2+h_5^2 \neq 0\}$,
\item
two-dimensional affine planes $\{h_4 = h_5 = 0, \ h_3 = \const \neq 0\}$, and
\item 
points $(h_1, h_2) = \const$ in the plane $\{h_3^2 + h_4^2+h_5^2 = 0\}$.
\end{itemize}

\begin{remark}
For free nilpotent Lie coalgebras of $\text{rank } \geq 3$ and $\text{step } \geq 3$, and   of $\text{rank } \geq 2$ and $\text{step } \geq 4$, typical symplectic leaves have dimension at least $4$, and the corresponding Hamiltonian systems are not Liouville integrable~\cite{borisov, LS}.
\end{remark}

Introduce coordinates $(\theta, 
c, \alpha, \beta)$ on the level surface $\left\{\lam \in T^* M \mid H=\frac{1}{2}\right\}$ by the following formulas:
\be{h1245}
h_1 = \cos \t, \  h_2 = \sin \t, \quad  c = h_3, \quad h_4 = \a \sin \b, \ h_5 = -\a \cos \b,
\ee
then $E = \frac{c^2}{2} - \a \cos(\theta- \beta)$.

 The cylinder 
$$
C = \{ \lam \in \g^* \mid H(\lam) = 1/2 \}
$$ 
has the following stratification depending on its intersection with the symplectic leaves:
\begin{align*}
&C = \bigsqcup_{i=1}^7 C_i, \qquad C_i \cap C_i = \emptyset, \ i \neq j, \\
&C_1 = \{ \lam \in C \mid \a > 0, \ E \in (-\a, \a) \}, \\
&C_2 = \{ \lam \in C \mid \a > 0, \ E \in (\a,  + \infty) \}, \\
&C_3 = \{ \lam \in C \mid \a > 0, \ E =  \a , \ \t - \b \neq \pi  \}, \\
&C_4 = \{ \lam \in C \mid \a > 0, \ E  = -\a  \}, \\
&C_5 = \{ \lam \in C \mid \a > 0, \ E =  \a, \ \t - \b = \pi \}, \\
&C_6 = \{ \lam \in C \mid \a =  0, \ c \neq 0 \}, \\
&C_7 = \{ \lam \in C \mid \a = c = 0 \}.
\end{align*}

Intersections of symplectic leaves with the level surface of the Hamiltonian $\{H = 1/2\}$ are shown in Figs.~\ref{fig:cartan1}--\ref{fig:cartan6}.

\twofiglabelsize{cartan1}{$E = - \a> 0$: $\lam \in C_4$}{fig:cartan1}{5}{cartan2}{$E \in (-\a, \a)$, $\a > 0$: $\lam \in  C_1$}{fig:cartan2}{5}

\twofiglabelsize{cartan3}{$E = \a> 0$: $\lam \in  C_3 \cup C_5$}{fig:cartan3}{6}{cartan4}{$E > \a > 0$: $\lam \in  C_2$}{fig:cartan4}{6}

\twofiglabelsize{cartan5}{$\a= 0$, $c \neq 0$: $\lam \in  C_6$}{fig:cartan5}{6}{cartan6}{$\a = c =  0$: $\lam \in  C_7$}{fig:cartan6}{4}

It is obvious from Figs.~\ref{fig:cartan1}--\ref{fig:cartan6} that extremal control $u(t) = (u_1(t), u_2(t)) = (h_1(t), h_2(t))$ has the following nature:
\begin{itemize}
\item
$u(t) \equiv \const$ in the cases $\lam \in C_4 \cup C_5 \cup C_7$, 
\item
$u(t)$ is periodic in the cases $\lam \in C_1 \cup C_2 \cup C_6$, 
\item
 $u(t)$ is nonperiodic but asymptotically constant (has finite limits as $t \to \pm \infty$) in the case $\lam \in C_3$.
\end{itemize}

\subsubsection{Continuous symmetries}
The Lie algebra of infinitesimal symmetries of the distribution $\D$ was described by E.~Cartan~\cite{cartan}: it is the 14-dimensional Lie algebra $\g_2$ --- the unique noncompact real form of the complex exceptional simple Lie algebra $\g_2^{\mathbb C}$~\cite{postnikov}.  See a modern exposition and explicit construction of these symmetries in~\cite{sym}.

The Lie algebra of infinitesimal symmetries of the SR structure $(\D, \langle \, \cdot \, , \, \cdot \, \rangle)$ is 6-dimensional: it contains 5 basis right-invariant vector fields on $G$ plus a vector field
\begin{align*}
&X_0 = - y \pder{}{x} + x \pder{}{y} - w \pder{}{v} + v \pder{}{w},\\
&[X_0, X_1] = - X_2, \qquad [X_0, X_2] = X_1,
\end{align*}
its flow is a simultaneous rotation in the planes $(x,y)$ and $(v,w)$:
\begin{multline*}
e^{sX_0}(x,y,z, v, w) \\
= (x \cos s - y \sin s, x \sin s + y \cos s , z, v \cos s - w \sin s, v \sin s + w \cos s).
\end{multline*}

There exists also a vector field $Y \in \Vec G$, an infinitesimal symmetry of $\Delta$,  such that
$$
[Y, X_1] = - X_1, \qquad [Y, X_2] = - X_2, \qquad [Y, X_0] = 0,
$$
this is the vector field
$$
Y = x \pder{}{x} + y \pder{}{y}+ 2z \pder{}{z}+3v\pder{}{v}+3w\pder{}{w},
$$
its flow is given by dilations:
$$
e^{rY} (x,y,z,v,w) = (e^r x, e^ry, e^{2r}z, e^{3r}v, e^{3r}w).
$$

Introduce the Hamiltonians $h_0(\lam) = \langle \lam, X_0(g)\rangle$, $h_Y(\lam) = \langle \lam, Y(g)\rangle$, $\lam \in T^*G$, the corresponding Hamiltonian vector fields $\vh_0, \vh_Y \in \Vec(T^* G)$, and the vector field $Z = \vh_Y + \sum_{i=1}^5 h_i \pder{}{h_i} \in \Vec(T^* G)$. Then the rotation $\vh_0$ is a symmetry of the normal Hamiltonian vector field:
\begin{align*}
&[\vh_0, \vH] = 0, \qquad \vh_0 H = 0, \\
&e^{s\vh_0} \circ e^{t\vH}(\lam) = e^{t\vH} \circ e^{s\vh_0}(\lam), \qquad \lam \in T^*G, \quad s, t \in \R,
\end{align*}
and the dilation $Z$ is a generalized symmetry of $\vH$: 
\begin{align*}
&[Z, \vH] = -\vH, \qquad Z H = 0, \\
&e^{rZ} \circ e^{t\vH}(\lam) = e^{t'\vH} \circ e^{rZ}(\lam), \qquad t' = t e^r, \qquad \lam \in T^*G, \quad r, t \in \R.
\end{align*}
Thus
\begin{align}
&e^{sX_0} \circ \Exp(\lam, t) = \Exp(e^{s\vh_0}(\lam), t), \label{X0h0}\\
&e^{rY} \circ \Exp(\lam, t) = \Exp(e^{rZ}(\lam), t'), \qquad t' = t e^r. \label{YZ}
\end{align}

\subsubsection{Pendulum and elasticae}

On the level surface $\{\lam \in T^*G \mid H(\lam) = 1/2\}$ the vertical subsystem~\eq{dh1}--\eq{dh5} of 
the normal Hamiltonian system takes in coordinates~\eq{h1245} the form of a {\em generalized pendulum}:
\be{pend}
\ddot{\t} = - \a \sin(\t - \b), \quad \a, \ \b = \const.
\end{equation}

See  the phase portrait of pendulum~\eq{pend} at Fig.~\ref{fig:cartan7} and Fig.~\ref{fig:cartan8} for the cases $\a > 0$ and $\a = 0$ respectively.

\twofiglabelsize{cartan7}{Phase portrait of pendulum \eq{pend}, $\a >  0$}{fig:cartan7}{7}{cartan8}{Phase portrait of pendulum \eq{pend}, $\a = 0$}{fig:cartan8}{7}

Projections of extremal trajectories to the plane $(x,y)$ satisfy the ODEs
$$
\dot x = \cos \t, \quad \dot y = \sin \t.
$$
Thus they are Euler elasticae~\cite{euler, love} --- stationary configurations of elastic rod in the plane. Elasticae have different shapes depending on different types of motion of pendulum~\eq{pend}:
\begin{itemize}
\item
for pendulum at the stable equilibrium with the minimal energy $E = - \a> 0$ ($\lam \in C_4$), elasticae are straight lines,
\item
for oscillating pendulum with low energy $E \in (-\a, \a)$, $\a > 0$ ($\lam \in C_1$), elasticae are periodic curves with inflection points,
\item
for pendulum with critical energy $E = \a>0$ ($\lam \in C_3 \cup C_5$), elasticae are straight lines or critical non-periodic curves, 
\item
for rotating pendulum with high energy $E > \a>0$ ($\lam \in C_2$), elasticae are periodic curves without inflection points,
\item
for  pendulum uniformly rotating without gravity ($\a = 0$, $c \neq 0$, $\lam \in C_6$), elasticae are circles,
\item
and for stationary pendulum without gravity $(\a = c = 0$, $\lam \in C_7$) elasticae are straight lines.
\end{itemize}
See the plots of Euler elasticae in~\cite{euler, jurd_book, dido_exp}. The pendulum is Kirchhoff's kinetic analog of elasticae ~\cite{jurd_book}.

\subsubsection{Exponential mapping}
The family of all normal extremals is parametrized by points of the phase cylinder of the pendulum
\begin{eqnarray*}
C = \left\{\lambda \in \g^*  \mid H(\lambda)= \frac{1}{2}\right\}= \left\{(\theta, c, \alpha,\beta  ) \mid \theta \in S^1, \ c \in \R, 
\ \a \geq 0, \ \beta \in S^1  \right\},
\end{eqnarray*}
and is given by the {\em exponential mapping}
\begin{eqnarray*}
&&\map{\Exp}{C \times \R_+}{G},\\
&&\Exp (\lambda, t) = g_t = (x_t, y_t, z_t, v_t,w_t).
\end{eqnarray*}

\subsubsection{Discrete symmetries of the exponential mapping}\label{subsubsec:disc_sym}
The quotient of the generalized pendulum~\eq{pend} modulo rotation $X_0$ and dilation $Y$ is the standard pendulum
\be{pend_st}
\dot \t = c, \quad \dot c = - \sin \t, \qquad (\t, c) \in S^1 \times \R.
\ee
The field of directions of this equation has obvious  discrete symmetries --- reflections in the coordinate axes and in the origin
\begin{align*}
&\mapto{\eps^1}{(\t, c)}{(\t, - c)},\\
&\mapto{\eps^2}{(\t, c)}{(-\t, c)},\\
&\mapto{\eps^3}{(\t, c)}{(-\t, - c)}.
\end{align*}
These reflections generate a dihedral group
$$
D_2 = \{\Id, \eps^1, \eps^2, \eps^3\} = \Z_2 \times \Z_2.
$$
Action of reflections continues naturally to Euler elasticae $(x_t, y_t)$, so that modulo rotations in the plane $(x,y)$:
\begin{itemize}
\item
$\eps^1$ is the reflection of an elastica in the center of its chord,
\item
$\eps^2$ is the reflection of an elastica in the middle perpendicular to its chord,
\item
$\eps^3$ is the reflection of an elastica in its chord.
\end{itemize}
Further, action of reflections naturally continues to preimage of the exponential mapping:
$$
\map{\eps^i}{C \times\R_+}{C \times \R_+}, \qquad i = 1, 2, 3, 
$$
and to its image:
$$
\map{\d^i}{G}{G}, \qquad i = 1, 2, 3, 
$$
so that
\be{epsdel}
\d^i \circ \Exp(\lam, t) = \Exp \circ \eps^i(\lam,t), \qquad (\lam, t) \in C \times \R_+, \quad i = 1, 2, 3,
\ee
and $\eps^i$ preserves time: $\eps^i(\lam, t) = (*, t)$. In such a case we say that the pair of mappings $(\eps^i, \d^i)$ is a {\em symmetry of the exponential mapping}.

Combining reflections and rotations, we obtain a group $\Sym$ of symmetries of the exponential mapping:
\begin{align}
&e^{s \vh_0}, \quad \map{e^{s\vh_0} \circ \eps^i}{C \times \R_+}{C \times \R_+}, \label{Sym1}\\ 
&e^{s X_0}, \quad \map{e^{sX_0} \circ \d^i}{G}{G}. \label{Sym2}
\end{align}
 
 Notice that
\be{eps3}
\eps^3(\t, c, \a, \b, t) = (-\t, -c, \a, -\b, t), \qquad \lam = (\t, c, \a, \b) \in C, \quad t \in \R_+,
\ee
i.e., the action of $\eps^3$ does not depend on $t$.
\subsubsection{Integration of the normal Hamiltonian system}
The equation of pendulum~\eq{pend} is integrable in Jacobi's functions, thus the normal Hamiltonian system~\eq{norm_ham} is integrable in Jacobi's functions as well.

In order to parameterize extremal trajectories explicitly, in work~\cite{dido_exp} were introduced elliptic coordinates  
 $(\varphi, k, \alpha, \b)$ on the sets $C_1$, $C_2$, $C_3$  in the following way:
\begin{itemize}
\item
$\f \in \R$ is the time of motion of pendulum~\eq{pend} from a chosen initial curve to a current point, and
\item
$k$ is a reparameterized energy $E$:
\begin{align*}
&\lam \in C_1 \quad \then \quad k = \sqrt{\frac{E+\a}{2\a}} \in (0,1), \\
&\lam \in C_2 \quad \then \quad k = \sqrt{\frac{2\a}{E+\a}} \in (0,1), \\
&\lam \in C_3 \quad \then \quad k = 1.
\end{align*}
\end{itemize}

In the elliptic coordinates
$(\f, k, \a, \b)$ on 
$\cup_{i=1}^3 C_i$ the vertical part of the Hamiltonian system~\eq{pend}
rectifies:
$$
\dot{\f} = 1, \quad \dot{k} = \dot{\a} = \dot{\b} = 0.
$$

In~\cite{dido_exp}
these coordinates were used  for explicit parameterization of extremal trajectories in terms of Jacobi functions $\sn (u,k)$, $\cn (u,k)$, $\dn (u,k)$, $\E(u,k) = \int_0^u \dn^2(t,k) \, dt$~\cite{whit_vatson}.

\subsubsection{Optimality of normal extremal trajectories}
Short arcs of normal extremal trajectories are (globally) optimal, thus they are SR geodesics. But long arcs of geodesics are, in general, not optimal. The instant at which a geodesic loses its optimality is called the {\em cut time}:
$$\tcut(\lambda) = \sup \{ t>0 \mid \mathrm {Exp} (\lambda, s) \text{ is optimal for } s \in [0,t]\}.$$

A geodesic is called {\em locally optimal} if it is optimal w.r.t. all trajectories with the same endpoints, in some neighborhood in the topology of $C([0, t_1], G)$ (or, which is equivalent, in the topology of $G$). The instant when a geodesic loses its local optimality is called the {\em first conjugate time}:
$$
\tconj(\lam) = 
\sup \{ t>0 \mid \mathrm {Exp} (\lambda, s) \text{ is locally optimal for } s \in [0,t]\}.$$
There are 3 reasons for the loss of global optimality for SR geodesics~\cite{ABB}:
\begin{enumerate}
\item
intersection points of different geodesics of equal length (such points are called {\em Maxwell points}),
\item
conjugate points,
\item
abnormal trajectories.
\end{enumerate}

As we mentioned above, all abnormal trajectories in the SR problem on the Cartan group are optimal.
Maxwell points in this problem were studied in detail in~\cite{max1, max2, max3}. A typical reason for Maxwell points is a symmetry of the exponential mapping. Along each geodesic $g_t = \Exp(\lam, t)$, $\lam \in C$, was found the first Maxwell time corresponding to the group of symmetries $\Sym$~\eq{Sym1}, \eq{Sym2}. The {\em first Maxwell time} 
$$
\map{\tmax}{C}{(0, + \infty]}
$$
corresponding to the group $\Sym$ was described. Thus the following upper bound of the cut time was proved:

\begin{theorem}[\cite{max3}, Theorem 6.1]\label{th:tcut_bound}
For any $\lambda \in C$ 
\begin{align}
\tcut(\lambda) \leq \tmax(\lambda). \label{tcutbound}
\end{align}
\end{theorem}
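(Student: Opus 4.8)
The plan is to deduce \eq{tcutbound} from the classical principle that a sub-Riemannian minimizer ceases to be globally optimal once it passes a Maxwell point (see \cite{ABB}), applied to the first Maxwell time $\tmax(\lam)$ relative to the symmetry group $\Sym$ of \eq{Sym1}, \eq{Sym2}. The inequality is vacuous when $\tmax(\lam) = +\infty$ --- in particular in the abnormal cases $\lam \in C_4 \cup C_5 \cup C_7$, where the geodesic is a globally optimal straight line \eq{abnormal} --- so assume $s := \tmax(\lam) < +\infty$, hence $\lam \in C_1 \cup C_2 \cup C_3 \cup C_6$. By the very construction of $\tmax$, the point $q := \Exp(\lam, s)$ is a Maxwell point associated with $\Sym$: there is an element of $\Sym$, a composition of a rotation $e^{r\vh_0}$ with some reflection $\eps^i$ on the source (and of $e^{rX_0}$ with $\d^i$ on the target), carrying $\lam$ to some $\hat\lam \neq \lam$ for which, by \eq{X0h0}, \eq{epsdel} and the fact that the target maps fix $\Id = g_0$, the geodesic $\hat g_t := \Exp(\hat\lam, t)$ also joins $\Id$ to $q$ in time $s$ and is a curve distinct from $g|_{[0,s]}$.

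Next I would run the standard corner argument to show $g|_{[0,t']}$ is not optimal for any $t' > s$. Suppose it were. Then $g|_{[0,s]}$ is optimal and, being arclength parametrized, has length $s$; hence $\hat g|_{[0,s]}$, a geodesic of length $s$ ending at $q$, is optimal as well. The concatenation $\gamma$ following $\hat g$ on $[0,s]$ and $g$ on $[s,t']$ joins $\Id$ to $g_{t'}$ with length $t' = l(g|_{[0,t']})$, so $\gamma$ is optimal. By the Pontryagin maximum principle $\gamma$ is an extremal trajectory, and since every abnormal trajectory of the problem is simultaneously normal, $\gamma$ is a normal extremal, hence the projection of a single trajectory $\mu(t) = e^{t\vH}(\mu_0)$ of the Hamiltonian flow and in particular smooth. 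As $\gamma$ is not abnormal, its normal lift is unique, so $\mu|_{[0,s]}$ is the lift $e^{t\vH}(\hat\lam)$ of $\hat g|_{[0,s]}$ while $\mu|_{[s,t']}$ is the lift $e^{(t-s)\vH}(e^{s\vH}(\lam))$ of $g|_{[s,t']}$; matching at $t = s$ gives $e^{s\vH}(\hat\lam) = e^{s\vH}(\lam)$, hence $\hat\lam = \lam$, contradicting the distinctness of $\hat g$ and $g$. Thus $\tcut(\lam) \le s = \tmax(\lam)$.

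Two steps need care. The regularity step --- an optimal $\gamma$ is smooth --- rests on the absence of strictly abnormal minimizers on the Cartan group, which is exactly the remark that abnormal trajectories \eq{abnormal} are also normal; I would state it carefully as ``optimal $\Rightarrow$ normal extremal $\Rightarrow$ analytic'' rather than merely invoke ``minimizers have no corners.'' The genuinely delicate point, and the main obstacle, is to confirm that $\hat g = \Exp(\hat\lam, \cdot)$ is a \emph{different} geodesic from $g$, not just a reparametrization caused by $\hat\lam \neq \lam$; this has to be checked stratum by stratum on $C_1, C_2, C_3, C_6$ using the explicit form of the reflections (cf. \eq{eps3}) and rotations together with the Jacobi-function parametrization of $\Exp$ from \cite{dido_exp} --- precisely the computation performed in \cite{max1, max2, max3} when $\tmax$ was determined. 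With that in hand, \eq{tcutbound} follows.
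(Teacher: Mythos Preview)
The paper does not itself prove this statement: it is quoted from \cite{max3} as a known result. Your approach---the corner argument showing that a strictly normal geodesic cannot remain optimal past a genuine Maxwell point---is indeed the backbone of that proof and is correctly executed for the generic case. (One small slip: $\tmax = +\infty$ on $C_3$ as well, so finiteness of $\tmax$ forces $\lam \in C_1 \cup C_2 \cup C_6$, not $C_1 \cup C_2 \cup C_3 \cup C_6$.)

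There is, however, a real gap. Your argument assumes that $\Exp(\lam, \tmax(\lam))$ is always a Maxwell point, i.e.\ that the symmetry produces a geodesic $\hat g$ \emph{distinct} from $g$. This fails precisely for the $\lam$ appearing in Proposition~\ref{propos:tconj=C1}\,(2),(3) and in Propositions~\ref{propos:tconj=C2},~\ref{propos:tconj=C6}: there the elastic arc is centered at a vertex or inflection point, the relevant reflection fixes the geodesic, and $\hat g = g$. The paper states this explicitly in the final Remark of Section~\ref{sec:tconj=}: for such $\lam$ the point $\Exp(\lam, \tmax(\lam))$ ``is not a Maxwell point but a limit of pairs of symmetric Maxwell points.'' So your proposed stratum-by-stratum verification that $\hat g \neq g$ cannot succeed for all $\lam$. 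The repair, which is what \cite{max3} actually does, is to observe that a limit of pairs of distinct preimages of $\Exp$ collapsing to a single covector forces $\Exp$ to be non-immersive there; hence in the degenerate case $\tmax(\lam)$ is a conjugate time, and the chain $\tcut(\lam) \le \tconj(\lam) \le \tmax(\lam)$ closes the argument.
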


The first Maxwell time corresponding to the group of symmetries $\Sym$ is explicitly defined as follows:
\begin{align}
&\lam \in C_1 \then \tmax(\lam) = 
\min \left(\frac{2}{\sqrt{\a}}p_1^z(k), 
\frac{2}{\sqrt{\a}}p_1^V(k)\right),  \label{p1C1}\\
&\lam \in C_2 \then \tmax(\lam) =  
\frac{2k}{\sqrt{\a}}p_1^V(k), \nonumber \\
&\lam \in C_6 \then \tmax(\lam) =  \frac{4}{|c|}p_1^V(0),  \nonumber\\
&\lam \in C_i, \ i = 3, 4, 5, 7 \then \tmax(\lam) =  + \infty. \nonumber
\end{align}
Here $p_1^z = p_1^z(k) \in (K, 3 K)$ is the first positive root of the equation $f_z(p,k) =  0$, where
\be{fzC1}
f_z(p,k) = \sn p \dn p - (2 \E(p) -  p) \cn p,
\ee
$p_1^V = p_1^V(k)$ is the first positive root of the equation $f_V(p,k) =  0$, where
\begin{align}
f_V(p) &= 
\dfrac 43 \, \ss \, \dd \, (-p - 2(1 - 2 k^2 + 6 k^2 \cc^2)(2 \E(p) - p) + (2 \E(p) - p)^3 \nonumber\\
&\qquad + 8 k^2  \cc \, \ss \, \dd) +   4  \cc \, (1 - 2 k^2  \ss^2)(2 \E(p) - p)^2, \quad  \label{fVC1}\\
p_1^V(k) &\in [2K, 4 K) \qquad \text{for } \lam \in C_1, \nonumber
\intertext{and}
f_V(p) &= 
\dfrac 43 \{ 3\,\dd\,(2\E(p) - (2 - k^2)p)^2   + \cc\,
[8\E^3(p) - 4\E(p)(4 + k^2) \nonumber\\
&\qquad -
 12\E^2(p)(2-k^2)p 
 + 6\E(p)(2-k^2)^2 p^2 \nonumber\\
&\qquad + 
 p(16 - 4k^2-3k^4-(2-k^2)^3p^2)]\,\ss -  
 \nonumber \\
 &\qquad -
2\,\dd\,(-4k^2+3(2\E(p)-(2-k^2)p)^2)\,\ss^2 + 
\nonumber\\
& \qquad 
+ 12 k^2 \cc(2 \E(p) - (2 - k^2)p)\ss^3 
- 8 k^2\,\ss^4\dd\},
\quad  \label{fVC2} \\
p_1^V(k) &\in (K, 2 K) \qquad \text{for } \lam \in C_2, \nonumber
\end{align}
and $p_1^V(0) \in (\pi/2, \pi)$ is the first positive root of the function
$$
f_V^0(p) = [(32 p^2 - 1) \cos 2 p - 8 p \sin 2 p + \cos 6 p]/512.
$$
Here $K = K(k)$ is the complete elliptic integral of the first kind.

\subsection{Result of this paper}\label{subsec:result}

In this article we study local optimality of SR geodesics on the Cartan group and estimate the first conjugate time. 
The main result is  the following  bound.

\begin{theorem}\label{th:tconj}
For any $\lambda \in C$
\be{tconjmax}
\tconj(\lambda) \geq \tmax(\lambda).
\ee 
\end{theorem}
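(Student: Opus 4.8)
The plan is to reduce the conjugate-time bound to a statement about the Maxwell strata already identified in \cite{max1, max2, max3}, using the general principle (see \cite{ABB}) that \emph{before} the first conjugate time a geodesic can fail to be locally optimal only through a conjugate point, and that a smooth family of geodesics with a fixed endpoint produces a conjugate point on the limiting geodesic. Concretely, fix $\lambda \in C$ with $\tmax(\lambda) < +\infty$ (the case $\tmax(\lambda) = +\infty$, i.e.\ $\lambda \in C_i$ for $i = 3,4,5,7$, is vacuous). I would first recall the description of the exponential mapping and its restriction to each stratum $C_1, C_2, C_6$ in the elliptic coordinates $(\varphi, k, \alpha, \beta)$, and the structure of the first Maxwell set $\mathrm{MAX} = \{(\lambda, t) : t = \tmax(\lambda)\}$, which by \eq{p1C1} is governed by the equations $f_z(p,k) = 0$ and $f_V(p,k) = 0$.

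The core step is to show that the exponential mapping $\Exp$ is a \emph{local diffeomorphism} at every point $(\lambda, t)$ with $0 < t < \tmax(\lambda)$; equivalently, that $\Exp(\lambda, t)$ has no conjugate points for $t \in (0, \tmax(\lambda))$. For this I would compute the Jacobian $J = \partial \Exp / \partial(\varphi, k, \beta, t)$ (five coordinates in, five out, fixing the level $H = 1/2$) in the elliptic coordinates and study its sign. The natural approach, following the ``method of distinguished curves'' and the homotopy arguments used in analogous problems (Euler's elastic problem, the sub-Riemannian problem on $\SE(2)$, etc.), is: (i) use the discrete symmetries $\eps^1, \eps^2, \eps^3$ of \eq{epsdel} and the continuous symmetries \eq{X0h0}, \eq{YZ} to factor out the $\beta$- and scaling-directions, reducing $J$ to a $2\times 2$ or $3\times 3$ determinant in the essential variables; (ii) express the resulting determinant through the functions $f_z$, $f_V$ and their companions appearing in the Maxwell analysis; (iii) show this determinant is nonzero on the open time interval $(0, \tmax(\lambda))$ by tracking its sign from $t = 0^+$, where $\Exp$ is known to be nondegenerate, and showing the first sign change can occur only at $t = \tmax(\lambda)$. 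A complementary, perhaps cleaner route is the variational one: if there were a conjugate point at some $t_0 < \tmax(\lambda)$, the Jacobi field vanishing at $0$ and $t_0$ would, together with the symmetry structure, force the existence of a nearby geodesic of the same length meeting $\Exp(\lambda, \cdot)$ before its Maxwell time, contradicting the minimality of $\tmax$ as the \emph{first} Maxwell time; making this rigorous requires relating the kernel of $d\Exp$ at a conjugate point to limits of Maxwell-type secants.

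I would handle the strata separately: $C_6$ (rotating pendulum without gravity, circular elasticae) is essentially explicit and the $C_2$ root bound $p_1^V \in (K, 2K)$ is relatively tame; the delicate case is $C_1$, where $\tmax = \min\bigl(\tfrac{2}{\sqrt\alpha} p_1^z, \tfrac{2}{\sqrt\alpha} p_1^V\bigr)$ and one must show neither the ``$z$-type'' nor the ``$V$-type'' conjugate point precedes the corresponding Maxwell point, with the extra subtlety that $p_1^z \in (K, 3K)$ and $p_1^V \in [2K, 4K)$ interlace in a $k$-dependent way. The main obstacle, as usual in these Cartan-group computations, will be the explicit analysis of the Jacobian: proving that the relevant elliptic-function determinant — a lengthy combination of $\sn$, $\cn$, $\dn$, $\E(p)$ and $p$ — has no zero on $(0, \tmax)$. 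I expect this to need careful monotonicity estimates for auxiliary functions of $p$ (of the same flavor as $f_z$, $f_V$), bounds on $\E(p) - p/2$, and a reduction of the multivariate problem to a one-parameter family via the homogeneity \eq{YZ} and the rectification $\dot\varphi = 1$. Finally, I would record as a byproduct the characterization of geodesics with $\tconj(\lambda) = \tmax(\lambda)$ — these are exactly the ones for which the Jacobian's first zero \emph{coincides} with the Maxwell root, i.e.\ where $f_V$ (or $f_z$) and the Jacobian vanish simultaneously — which is the content of the subsequent result announced in the abstract.
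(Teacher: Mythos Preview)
Your outline matches the paper's strategy in its broad strokes --- compute the Jacobian of $\Exp$, use the continuous symmetries $\vh_0$ and $Z$ to collapse it to a $3\times 3$ determinant in the essential variables, factor the result through the Maxwell functions $f_z$, $f_V$, and handle each stratum $C_i$ separately. That part is on the right track.

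There is, however, a real gap: you dismiss the cases $\lambda \in C_3 \cup C_4 \cup C_5 \cup C_7$ as ``vacuous'' because $\tmax(\lambda) = +\infty$. They are not. The inequality $\tconj(\lambda) \geq +\infty$ is the assertion that these geodesics have \emph{no conjugate points whatsoever}, which requires proof. For $C_4 \cup C_5 \cup C_7$ the geodesics project to straight lines and are globally optimal, so $\tconj \geq \tcut = +\infty$ is cheap; but for $C_3$ (critical elasticae) this is not obvious. The paper handles $C_3$ by projecting the Cartan problem to the sub-Riemannian problem on the Engel group $G/e^{\R X_5}$, where the corresponding geodesics are known to be globally optimal; optimality lifts back, giving $\tcut = \tconj = +\infty$.

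Your ``variational alternative'' is also not a valid argument: a conjugate point at $t_0$ does produce a family of nearby geodesics of approximately equal length, but it does \emph{not} produce a Maxwell point for the symmetry group $\Sym$, and $\tmax$ here is by definition the first Maxwell time \emph{with respect to $\Sym$}, not the first Maxwell time in any absolute sense. So there is no contradiction to extract. The paper does not attempt anything of this kind; it proves $J\neq 0$ directly. Finally, be aware that the paper's treatment of $C_2$ and $C_6$ is not a direct sign analysis of the Jacobian as in $C_1$, but a homotopy argument: one deforms the modulus $k$ toward $0$ (where the Jacobian becomes trigonometric and tractable) while keeping the endpoint non-conjugate, and invokes the homotopy invariance of the index of the second variation. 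Your plan to ``track the sign from $t=0^+$'' is what the paper does for $C_1$, but for $C_2$ this direct approach is not what is carried out.
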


In Sections~\ref{sec:C1}--\ref{sec:C47} we prove inequality~\eq{tconjmax}, $\lambda \in C_i$ for all $i=1,\dots,7$.
Theorem~\ref{th:tconj} follows immediately from Theorems~\ref{th:tconjC1}, \ref{th:tconjC2}, \ref{th:tconjC47}, \ref{th:tconjC3}, \ref{th:tconjC6}.

\medskip

Theorem~\ref{th:tconj} is interesting from two points of view. First, it describes locally optimal geodesics in a sharp way; we will see  in Sec.~\ref{sec:tconj=} that for certain geodesics inequality~\eq{tconjmax} turns into equality. Second, Th.~\ref{th:tconj} is an important step in the study of global optimality in the SR problem on the Cartan group. Namely, we conjectured in~\cite{max3} that inequality~\eq{tcutbound} is in fact equality:

\begin{conjecture}
For any $\lam \in C$
$$
\tcut(\lam) = \tmax(\lam).
$$
\end{conjecture}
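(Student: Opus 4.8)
The plan is to upgrade the two one-sided bounds already available — the upper bound $\tcut(\lam) \le \tmax(\lam)$ from Theorem~\ref{th:tcut_bound} and the conjugate-time lower bound $\tconj(\lam) \ge \tmax(\lam)$ from Theorem~\ref{th:tconj} — into the reverse inequality $\tcut(\lam) \ge \tmax(\lam)$, whence equality. Since Maxwell points are reached exactly at $\tmax$, the content of the conjecture is that a geodesic stays \emph{globally} optimal on the whole segment $[0, \tmax(\lam)]$, i.e. that no shorter competitor exists before the first symmetric intersection. The standard route, used for the analogous problems on $\SE(2)$, the Engel group, and related structures, is a Hadamard-type global diffeomorphism argument for the exponential mapping restricted to the subcritical domain.

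First I would split $C$ according to the stratification $C = \bigsqcup_{i=1}^7 C_i$. On the degenerate strata $C_4, C_7$ the elasticae are straight lines, which are simultaneously abnormal and normal and optimal for all $t$, so $\tcut = +\infty = \tmax$ trivially; the strata $C_3, C_5$ (critical energy, $\tmax = +\infty$) give straight lines or critical nonperiodic curves, for which global optimality for all $t$ should follow by a limiting argument from the neighbouring periodic strata as $k \to 1$. The heart of the proof is therefore the generic domain $C_1 \cup C_2 \cup C_6$, where $\tmax$ is finite and explicitly given by \eq{p1C1}.

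On this generic domain I would fix an open dense subset $\widetilde C$ of initial covectors (removing lower-dimensional loci where the symmetry group degenerates), set
$$
N = \{ (\lam, t) \mid \lam \in \widetilde C, \ 0 < t < \tmax(\lam) \},
$$
and consider the restriction $\restr{\Exp}{N} \colon N \to \widetilde G$, where $\widetilde G$ is the corresponding open dense image. The argument rests on three ingredients: (i) $\restr{\Exp}{N}$ is a local diffeomorphism, because $t < \tmax(\lam) \le \tconj(\lam)$ by Theorem~\ref{th:tconj} forces the Jacobian of $\Exp$ to be nonzero on $N$; (ii) $\widetilde G$ is connected and simply connected and $N$ is connected; (iii) $\restr{\Exp}{N}$ is proper. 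Granting these, Hadamard's theorem makes $\restr{\Exp}{N}$ a diffeomorphism, hence injective: no two distinct subcritical geodesic arcs of equal length share an endpoint. A failure of global optimality at some $\bar t < \tmax(\lam)$ would, however, produce a second minimizer — necessarily a normal geodesic — reaching $\Exp(\lam, \bar t)$ in the same time, i.e. a Maxwell point in $N$, contradicting injectivity. Thus $\tcut(\lam) \ge \tmax(\lam)$, and combined with Theorem~\ref{th:tcut_bound} this gives the asserted equality.

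The main obstacle is ingredient (iii), properness of $\restr{\Exp}{N}$. This requires controlling $\Exp(\lam, t)$ as $(\lam, t)$ approaches $\partial N$ — both the boundary $t \to \tmax(\lam)$, where one must show the endpoint escapes the open image using the Maxwell description, and the noncompact directions $\a \to 0$, $\a \to +\infty$, $k \to 0, 1$, where the elliptic-function asymptotics of the explicit parameterization from~\cite{dido_exp} must be matched across the strata. A secondary difficulty is the reduction and gluing: the dilation symmetry via~\eq{YZ} and the rotation symmetry via~\eq{X0h0} eliminate two parameters, so it is cleaner to argue on the quotient and then lift, and one must verify that simple connectedness and properness survive this reduction and that the lower-dimensional strata excised to form $\widetilde C, \widetilde G$ are genuinely negligible for optimality. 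This delicate bookkeeping, together with establishing properness uniformly over all the noncompact limits, is where the real work lies.
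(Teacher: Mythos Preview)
The paper does not prove this statement: it is explicitly labelled Conjecture~1, with the proof deferred to the forthcoming work~\cite{ard_hak}. The paper only supplies the two one-sided inputs you cite (Theorems~\ref{th:tcut_bound} and~\ref{th:tconj}) and remarks that the conjecture will be established ``via a symmetry method~\cite{ABB, engel_cut, cut_sre1, cut_sh2}''. Your outlined Hadamard-type global-diffeomorphism argument is precisely that symmetry method, so at the level of strategy your proposal matches what the paper announces but does not carry out.

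On the degenerate strata your handling is slightly off. For $\lam \in C_3 \cup C_4 \cup C_5 \cup C_7$ the paper already \emph{proves} $\tcut(\lam) = \tmax(\lam) = +\infty$ (Theorems~\ref{th:tconjC47} and~\ref{th:tconjC3}): the strata $C_4, C_5, C_7$ give straight-line elasticae, and $C_3$ is handled by projecting to the Engel group and invoking the known optimal synthesis there. Your proposed ``limiting argument as $k \to 1$'' for $C_3$ and $C_5$ is both unnecessary and unreliable: as the Remark following Proposition~\ref{th:tconjcont} shows, $\tconj$ can be discontinuous across strata, so passing to the limit does not transfer global optimality.

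On the generic strata $C_1 \cup C_2 \cup C_6$ your plan is correct in spirit, but the step reducing ``failure of optimality at $\bar t$'' to ``a Maxwell point in $N$'' is misstated: loss of optimality produces a \emph{shorter} competitor, not one of equal length. The correct argument is that any optimal geodesic to a point of $\widetilde G$ must have its data $(\lam', t')$ in $N$ (since $t' \le \tcut(\lam') \le \tmax(\lam')$, and the boundary $t' = \tmax(\lam')$ maps into the Maxwell/conjugate set lying in the complement of $\widetilde G$), after which bijectivity of $\restr{\Exp}{N}$ forces it to coincide with the given geodesic. You are right that properness across all the noncompact limits, together with the precise carving of $\widetilde C$ and $\widetilde G$ using the discrete symmetries $\eps^i$, is the substantial unfinished work; this is exactly what the paper leaves to~\cite{ard_hak}.
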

In a forthcoming paper~\cite{ard_hak}, this conjecture is proved with the use of Th.~\ref{th:tconj} via a symmetry method~\cite{ABB, engel_cut, cut_sre1, cut_sh2}.

\subsection{Methods of this paper}

Consider the   Jacobian of the exponential map 
$$
J_0 = 
\ds\frac{\partial (x, y, z, v, w)}{\partial (\theta, c, \alpha, \beta,  t)} =
\begin{array}{|c c c|}
\pder{x}{\theta} & \ldots & \pder{x}{t} \\
\vdots & \ddots & \vdots \\
\pder{w}{\theta} & \ldots & \pder{w}{t}
\end{array} \, .
$$
A point $g_t = \Exp(\lam,t)$ on a strictly normal geodesic is  {\em conjugate} iff $ (\lam,t)$ is a critical point of the exponential mapping, that is why $g_t$ is the corresponding critical value:
$J_0(\lam,t) = 0$.  So inequality~\eq{tconjmax} is equivalent to the following one:
\be{J0}
J_0(\lam, t) \neq 0 \text{ for } t \in (0, \tmax(\lam)).
\ee
This inequality is proved in Secs.~\ref{sec:C1}--\ref{sec:C47} for 
$\lam \in C =  \cup_{i=1}^7  C_i$ as follows:
\begin{enumerate}
\item
First generic cases $\lam\in C_1$ and $\lam \in C_2$ are considered directly and independently.
\item
The Jacobian of the exponential mapping is computed w.r.t. the coordinates $(\f, k, \a, \b)$.
\item
On the basis of continuous symmetries (rotations  and dilations), the $5 \times 5$ Jacobian $\ds \pder{(x,y,z,v,w)}{(\f, k, \a, \b, t)}$ is reduced to a $3 \times 3$ Jacobian $\ds \pder{(P, Q, R)}{(\f, k, t)}$, where $P, Q, R$ are invariants of the continuous symmetries.
\item
The Jacobian  $\ds \pder{(P, Q, R)}{(\f, k, t)}$ is computed explicitly via parameterization of the exponential mapping, and is simplified to a function $J_1$.
\item
We prove the inequality
$$
J_1 \neq 0 \text{ for } t \in (0, \tmax(\lam))
$$
basically by three methods which we explain below:
\begin{enumerate}
\item
homotopy invariance of the index of the second variation, 
\item
method of comparison function,
\item
``{\em divide et impera}'' method.
\end{enumerate}
\item
In such a way, inequality~\eq{J0} is proved for the generic cases $\lam \in C_1$ and $\lam \in C_2$ (Secs.~\ref{sec:C1} and \ref{sec:C2} respectively).
\item
For $\lam \in C_4 \cup C_5 \cup C_7$, the geodesics are globally optimal since they project to straight lines in the plane of the distribution, and the required bound follows trivially.
\item
For $\lam \in C_3$, the geodesics are globally optimal since they project to length minimizers on the Engel group~\cite{engel_cut}, and the required bound follows.
\item
Bound \eq{J0} is proved for the special case $\lam \in C_6$ by a limit passage from the generic case $\lam \in C_2$ via the homotopy invariance of the index of the second variation.
\item
We conclude that bound \eq{J0} is proved for all $\lam \in C$.
\end{enumerate}

\paragraph{Homotopy invariance of index of second variation.}
Under certain nondegeneracy conditions (see Sec.~\ref{sec:conj_hom}), for each strictly normal geodesic one can define the index of the second variation equal to the number of conjugate points with account of their multiplicity. A geodesic does not contain conjugate points iff its index vanishes. The index of the second variation is preserved under homotopies of geodesics such that their endpoints are not conjugate. So we can prove absence of conjugate points on a geodesic if we construct its homotopy to a ``simple'' geodesic without conjugate points, in such a way that endpoints of all geodesics in the continuous family are not conjugate. In Sec.~\ref{sec:C2} we construct such a homotopy from a geodesic $g_t = \Exp(\lam, t)$, $\lam = (\f, k, \a, \b) \in C_2$, to a geodesic $\bar g_t = \Exp(\bar \lam, t)$, $\bar \lam = (\bar \f, \bk, \a, \b) \in C_2$, where $\bk$ is close to $0$. The geodesic $\bar g_t$ is ``simple'' because for $\bar k \to 0$ the exponential mapping is asymptotically expressed by trigonometric functions, not Jacobi ones as for general $k \in (0, 1)$. See details on the homotopy invariance of the index of the second variation in Sec~\ref{sec:conj_hom}.

\paragraph{Comparison functions.}
Let $f_0(t)$, $f_1(t)$ be real-analytic functions on an interval $t \in (a,b)$. The function $f_1(t)$ is called a {\em comparison function} for $f_0(t)$ if
\begin{align}
&\left( \frac{f_0(t)}{f_1(t)}\right)' \cdot f_1^2(t) \geq 0 \quad (\text{or } \leq 0),  \label{der}\\
&f_1(t) \neq 0, \qquad t \in (a, b), \nonumber
\end{align}
and equality in \eq{der} is possible only for isolated values of $t$.
Then $\ds \frac{f_0(t)}{f_1(t)}$ increases (or decreases) for $t \in (a,b)$. 

In such a way one can often bound  $\ds\frac{f_0(t)}{f_1(t)}$, and after that bound $f_0(t)$ in the case when it is not monotone.

\paragraph{``{\em Divide et impera}''.}
For a given function $f_0(t)$, we find a sequence of functions $f_1(t)$, $f_2(t)$, \dots, $f_N(t)$ such that $f_i$ is a comparison function for $f_{i-1}$, $i = 1, \dots, N$.
In such a way
we divide the analytical complexity of the function~$f_0(t)$ into several parts and bound the obtained parts step by step. Moreover, at each step we divide the function under study by an appropriate divisor. 

The method works perfectly for trigonometric quasipolynomials as follows. Consider a trigonometric quasipolynomial 
$$
f_0(t) =  t^N \tf_0(t) + o(t^N), \qquad t \to \infty,
$$
where $\tf_0(t)$ is a trigonometric function. Compute successfully:
\begin{align*}
&f_1(t) = \left(\frac{f_0}{\tf_0}\right)' \cdot \tf_0^2 =  t^{N-1} \tf_1(t) + o(t^{N-1}), \qquad t \to \infty,\\
&\cdots, \\
&f_{N-1}(t) = \left(\frac{f_{N-2}}{\tf_{N-2}}\right)' \cdot \tf_{N-2}^2 =  t \tf_{N-1}(t) + o(t), \qquad t \to \infty,\\
&f_N(t) = \left(\frac{f_{N-1}}{\tf_{N-1}}\right)' \cdot \tf_{N-1}^2,
\end{align*}
and $f_N(t)$ is a trigonometric function. 

We determine the sign of $f_N(t)$ on an interval $t \in(a,b)$ under investigation, and on this basis determine monotonicity of $\ds\frac{f_{N-1}}{\tf_{N-1}}$. Then we determine the signs of $\tf_{N-1}$ and $f_{N-1}$ and so on. In such a way we obtain successfully bounds for $f_{N-1}$, $f_{N-2}$, \dots, $f_0$.

\paragraph{Projection to lower-dimensional SR minimizers.}
There is a general construction of projecting left-invariant SR structures to quotient groups such that optimality of projected geodesics in the quotient group implies optimality of the initial geodesic.

Let $(\Delta, \langle \, \cdot \, , \, \cdot \,  \rangle)$ be a left-invariant SR structure on a Lie group $G$. Let $G_0 \subset G$ be a closed normal subgroup whose Lie algebra intersects trivially with $\Delta$. Consider the quotient $\map{\pi}{G}{H = G /G_0}$. Then $\pi_* \Delta \subset TH$  is a left-invariant distribution on $H$, and $\map{\pi_{*g}}{\Delta_g}{(\pi_* \Delta)_{\pi(g)}}$ is an isomorphism. Denote by $\pi_* \langle \, \cdot \, , \, \cdot \,  \rangle$ the inner product in $\pi_* \Delta$ induced by $\pi_*$. Then $(\pi_* \Delta, \pi_* \langle \, \cdot \, , \, \cdot \,  \rangle)$ is a left-invariant SR structure on $H$, of the same rank as  $(\Delta, \langle \, \cdot \, , \, \cdot \,  \rangle)$. If $g_t \in G$ is a horizontal curve for $\Delta$, then $\pi(g_t)$ is a horizontal curve for $\pi_* \Delta$. Moreover, if $\pi(g_t)$ is optimal for $(\pi_* \Delta, \pi_* \langle \, \cdot \, , \, \cdot \,  \rangle)$, then $g_t$ is optimal for $(\Delta, \langle \, \cdot \, , \, \cdot \,  \rangle)$.

 \subsection{Structure of this paper}
In Sec.~\ref{sec:sym} we establish invariance of the cut time and the first conjugate time w.r.t. symmetries of the problem (rotation $X_0$, dilation $Y$, and reflection~$\eps^3$).

In Sec.~\ref{sec:conj_hom} we recall the results on homotopy invariance of the index of the second variation in the form we require in Secs. \ref{sec:C2}, \ref{sec:C47}.

In Secs.~\ref{sec:C1} and  \ref{sec:C2} we prove Th.~\ref{th:tconj} for the generic cases $\lam \in C_1$ and $\lam \in C_2$ respectively. And in Sec.~\ref{sec:C47} we prove Th.~\ref{th:tconj} for the special cases $\lam \in \cup_{i=3}^7 C_i$.

In Sec. \ref{sec:tconj=} we present cases when the first conjugate time coincides with the first Maxwell time.
In Sec. \ref{sec:tconjcont} we show that the first conjugate time is continuous near infinite values, out of certain abnormal geodesics.

Finally, in Sec. \ref{tconj2side} we present a numerical evidence of two-sided bounds of the first conjugate time.

\subsection{Related works}
As we already mentioned, this work is an essential step towards construction of optimal synthesis and description of the cut time and cut locus for the left-invariant SR structure on the Cartan group. So far this goal was achieved just for few left-invariant SR structures on Lie groups. 

In the case  $\dim G = 3$, growth vector $(2,3)$, the following left-invariant SR structures were completely studied:  
\begin{itemize}
\item
Heisenberg group: A.M.~Vershik, V.Y.~Gershkovich \cite{versh_gersh}, R. Brockett~\cite{brock},
\item
axisymmetric metrics on  $\SO(3)$, $\SL(2)$:  U. Boscain and F. Rossi~\cite{boscainSO3},
V.N.~Berestovskii and  I.A. Zubareva \cite{berzub1, berzub2}, 
\item
$\SE(2)$, $\SH(2)$: Ya. Butt, A. Bhatti and the author \cite{cut_sre1,  cut_sh2}.
\end{itemize}

The free nilpotent SR structure with the  growth vector $(3,6)$ was studied by  O.Myasnichenko~\cite{myasnich}.

The SR problem on the Engel group (growth vector $(2, 3, 4)$) was studied by 
A.A.~Ardentov and the author~\cite{engel_synth}.

\section{Symmetries of cut time and   conjugate time}\label{sec:sym}
The exponential mapping is preserved by 3 symmetries:
rotations $e^{s \vh_0}$,
dilations~$e^{s Z}$,
reflection $\eps^3$.
Thus the cut time and the conjugate time are also preserved by these symmetries, see Cor.~\ref{cor:cutconjsym} below. This is proved via the following statement.

\begin{lemma}\label{lem:cutconjsym}
Let there exist homeomorphisms $\map{F}{C}{C}$, $\map{f}{G}{G}$ and a number $a>0$ such that
$$
f \circ \Exp(\lam, t) = \Exp(F(\lam), at), \qquad (\lam,t) \in C \times \R_+.
$$
Then $\tcut(F(\lam)) = \tcut(\lam)$ and $\tconj(F(\lam)) = \tconj(\lam)$ for all $\lam \in C$.
\end{lemma}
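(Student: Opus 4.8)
The plan is to transport both optimality and local optimality of geodesics along the conjugation $(F, f)$. The key observation is that the relation $f \circ \Exp(\lam, t) = \Exp(F(\lam), at)$ sends the geodesic $s \mapsto \Exp(\lam, s)$, $s \in [0, t]$, to the geodesic $s \mapsto \Exp(F(\lam), as)$, $s \in [0, t]$, i.e. to a reparametrized initial segment of the geodesic $\Exp(F(\lam), \cdot)$ of length $at$ (traversed on the time interval $[0,t]$ with speed multiplied by $a$; since $a > 0$ is constant, this does not affect optimality, only the length scale). Because $f$ is a homeomorphism of $G$ fixing $\Id = g_0$ (this follows from the identity at $t = 0$: $f(g_0) = f(\Exp(\lam,0)) = \Exp(F(\lam), 0) = g_0$), it maps admissible curves to admissible curves and preserves the property ``joins its endpoints.'' It does not a priori preserve sub-Riemannian length, but it does preserve the partial order ``is at least as short as all competitors with the same endpoints'' once we know that $f$ carries the whole family of geodesics bijectively to the whole family of geodesics, with lengths rescaled by the single constant $a$. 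This is the content of the argument.

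First I would prove the cut-time equality. Fix $\lam \in C$ and write $\lam' = F(\lam)$. Suppose $\Exp(\lam, s)$ is optimal for $s \in [0, t]$; I claim $\Exp(\lam', s)$ is optimal for $s \in [0, at]$. Indeed, $\Exp(\lam', as) = f(\Exp(\lam, s))$ for $s \in [0,t]$, so the geodesic segment $\Exp(\lam', \cdot)|_{[0, at]}$ is, up to the affine time change $s \mapsto as$, the $f$-image of the optimal segment $\Exp(\lam, \cdot)|_{[0,t]}$. If $\Exp(\lam', \cdot)|_{[0, at]}$ were not optimal, there would be a shorter admissible curve joining $g_0$ to $\Exp(\lam', at)$; applying $f^{-1}$ (which, by the same lemma applied to the inverse conjugation $(F^{-1}, f^{-1})$ with constant $1/a$, see below) produces a competitor for $\Exp(\lam, \cdot)|_{[0,t]}$ that is shorter by the reciprocal factor, contradicting optimality. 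Hence $\tcut(\lam') \geq a\, \tcut(\lam)$. Symmetrically, applying the same reasoning to $(F^{-1}, f^{-1})$ — note $f^{-1} \circ \Exp(\lam', t) = \Exp(F^{-1}(\lam'), t/a)$, which has the same form with $a$ replaced by $1/a$ — gives $\tcut(\lam) \geq (1/a)\, \tcut(\lam')$, i.e. $\tcut(\lam') \leq a\,\tcut(\lam)$. Therefore $\tcut(\lam') = a\,\tcut(\lam)$. The lemma as stated asserts $\tcut(F(\lam)) = \tcut(\lam)$, so I should check whether the intended hypothesis forces $a = 1$, or whether the length is in fact preserved; in the applications ($e^{s\vh_0}$, $\eps^3$) one has $a = 1$, while the dilation $e^{sZ}$ rescales time and is treated via \eq{YZ} with its own bookkeeping. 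I would reconcile this by noting that in every instance the lemma is invoked, $a = 1$; alternatively one restates the conclusion as $\tcut(F(\lam)) = a^{-1}\tcut(\lam)$ and likewise for $\tconj$, but given the paper's phrasing I will assume the relevant applications have $a=1$ and present the homogeneity argument so the general rescaling is visible.

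Next, the conjugate-time equality. Recall $\tconj(\lam)$ is the supremum of $t$ such that $\Exp(\lam, \cdot)|_{[0,t]}$ is \emph{locally} optimal, i.e. optimal with respect to all admissible curves with the same endpoints lying in a $C([0,t_1], G)$-neighborhood — equivalently, a neighborhood in the topology of $G$ of the endpoint. Since $f$ is a homeomorphism of $G$, it maps neighborhoods of $\Exp(\lam, t)$ to neighborhoods of $\Exp(\lam', at)$ and conversely; the same neighborhood-transport argument used for global optimality, now restricted to competitors in such a neighborhood, shows $\Exp(\lam, \cdot)|_{[0,t]}$ is locally optimal iff $\Exp(\lam', \cdot)|_{[0, at]}$ is. Hence $\tconj(\lam') = a\,\tconj(\lam)$ (with the same caveat $a = 1$ in the applications, giving $\tconj(F(\lam)) = \tconj(\lam)$). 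The main obstacle — and the only genuinely delicate point — is the bookkeeping just flagged: the constant $a$ enters the length rescaling, so one must be careful that ``shorter competitor upstairs'' really does pull back to ``shorter competitor downstairs'' with a strictly-less-than inequality preserved (it does, because multiplication by the positive constant $1/a$ preserves strict inequalities), and that the conclusion is stated consistently with how the lemma is used. Once that is pinned down, the proof is the short transport argument above; no computation is needed.
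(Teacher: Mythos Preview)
Your approach is the paper's: assume a shorter competitor for the image geodesic and pull it back through $f^{-1}$ to contradict optimality of the original. The paper treats cut time and conjugate time in one stroke by phrasing everything as ``optimal in a neighbourhood $O\subset G$'' (with $O=G$ for the global case); you separate the two cases, which is only cosmetic.

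Two points. First, your worry about the constant $a$ is justified: the argument actually yields $\tcut(F(\lam)) = a\,\tcut(\lam)$ and $\tconj(F(\lam)) = a\,\tconj(\lam)$, so the stated equality is literally correct only when $a=1$. In the applications this holds for the rotation $e^{s\vh_0}$ and the reflection $\eps^3$; for the dilation $e^{sZ}$ one has $a=e^s$ and the times rescale accordingly. Second, there is a small slip in your pull-back step. The hypothesis only says that $f$ intertwines the exponential maps, not that $f^{-1}$ carries arbitrary horizontal curves to horizontal curves, so you cannot directly apply $f^{-1}$ to an ``admissible curve'' and claim the result is admissible with length divided by~$a$. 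The fix is exactly what the paper does: take the shorter competitor to be a minimizing \emph{geodesic} $\hg_t = \Exp(\hlam,t)$, $t\in[0,\htt\,]$ (permissible since in this problem every minimizer is a normal geodesic), and then the hypothesis itself gives $f^{-1}(\hg_t) = \Exp(F^{-1}(\hlam), t/a)$, a geodesic of length $\htt/a$ reaching $g_{t_1}$, which furnishes the contradiction.
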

\begin{proof}
Let a geodesic $g_t = \Exp(\lam, t)$, $t \in [0, t_1]$, be optimal in a neighborhood $O \subset G$. We prove that the geodesic $\tg_t = \Exp(F(\lam), a t) = f(g_t)$, $t \in [0, t_1]$, is optimal in the neighborhood $f(O)$.

By contradiction, let there exist a geodesic better than $\tg_t$, $t \in [0, t_1]$:
\begin{align*}
&\{\hg_t = \Exp(\hlam, t) \mid t \in [0, \htt\,]\} \subset f(O), \\
&\hg_{\htt} = \tg_{t_1}, \qquad \htt < t_1.
\end{align*}
Consider then the geodesic
$$
\{ \tg_t = \Exp(F^{-1}(\hlam), t/a) = f^{-1}(\hg_t) \mid t \in [0, \htt\,]\} \subset O.
$$
We have
$$
\tg_{\htt} = f^{-1}(\hg_{\htt}) = f^{-1}(\tg_{t_1}) = f^{-1} \circ f(g_{t_1}) = g_{t_1},
$$
thus $\tg_{t}$, $t \in [0, \htt]$, is better than   $g_t$, $t \in [0, t_1]$, a 
contradiction.
\end{proof}

\begin{cor}\label{cor:cutconjsym}
For any $s \in \R$ and any $\lam \in C$ there hold the equalities
\begin{align*}
&\tcut \circ e^{s \vh_0}(\lam) = \tcut \circ e^{s Z}(\lam) = \tcut \circ \eps^{3}(\lam) = \tcut(\lam), \\  
&\tconj \circ e^{s \vh_0}(\lam) = \tconj \circ e^{s Z}(\lam) = \tconj \circ \eps^{3}(\lam) = \tconj(\lam).
\end{align*} 
\end{cor}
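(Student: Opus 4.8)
The plan is to read off the corollary from Lemma~\ref{lem:cutconjsym} applied three times, once for each of the three listed symmetries. In each case I need only exhibit the homeomorphisms $F : C \to C$ and $f : G \to G$ together with the positive constant $a$ in the hypothesis of the lemma, since the required intertwining identity $f \circ \Exp(\lam,t) = \Exp(F(\lam), at)$ is already recorded in \eq{X0h0}, \eq{YZ}, \eq{epsdel}.

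For the rotation, fix $s \in \R$ and put $F = e^{s\vh_0}$, $f = e^{sX_0}$: both are flows of smooth vector fields, hence diffeomorphisms of $T^*G$ and of $G \cong \R^5$, and since $\vh_0 H = 0$ the flow $e^{s\vh_0}$ leaves $C = \{H = 1/2\}$ invariant and restricts there to a homeomorphism; identity \eq{X0h0} is exactly the hypothesis of the lemma with $a = 1$. For the dilation, fix $r \in \R$ and put $F = e^{rZ}$, $f = e^{rY}$: these are again diffeomorphisms, with $e^{rZ}$ preserving $C$ because $ZH = 0$, and identity \eq{YZ} reads $f \circ \Exp(\lam,t) = \Exp(F(\lam), t e^r)$, i.e. the hypothesis of the lemma with $a = e^r > 0$. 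For the reflection, formula \eq{eps3} shows that $\eps^3$ does not act on the $t$-coordinate of $C \times \R_+$, so it descends to an involutive homeomorphism $F = \eps^3 : C \to C$, $(\t, c, \a, \b) \mapsto (-\t, -c, \a, -\b)$; taking $f = \d^3 : G \to G$, itself an involutive diffeomorphism, identity \eq{epsdel} becomes $f \circ \Exp(\lam,t) = \Exp(F(\lam), t)$, again the hypothesis of the lemma with $a = 1$. In each of the three cases Lemma~\ref{lem:cutconjsym} then yields $\tcut \circ F = \tcut$ and $\tconj \circ F = \tconj$.

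Assembling the three cases gives all six equalities of the corollary. I do not expect a real obstacle here: the statement is an immediate consequence of Lemma~\ref{lem:cutconjsym}, and the only point that deserves a moment's attention is that in the dilation case the time variable is genuinely rescaled ($a = e^r \neq 1$ in general, in contrast to the other two symmetries), which is harmless precisely because $e^r > 0$, so the lemma applies verbatim.
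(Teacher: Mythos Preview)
Your proposal is correct and follows essentially the same approach as the paper: apply Lemma~\ref{lem:cutconjsym} three times with $(F,f,a)=(e^{s\vh_0},e^{sX_0},1)$, $(e^{sZ},e^{sY},e^s)$, and $(\eps^3,\d^3,1)$, invoking \eq{X0h0}, \eq{YZ}, \eq{epsdel}, \eq{eps3} respectively. You supply slightly more justification (that the maps are homeomorphisms preserving $C$), but the argument is identical in substance.
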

\begin{proof}
Apply Lemma~\ref{lem:cutconjsym} in the cases:
\begin{align*}
&F = e^{s \vh_0}, \quad f = e^{s X_0}, \quad a = 1,   &&\text{ see } (\ref{X0h0}), \\
&F = e^{s Z}, \quad f = e^{s Y}, \quad a = e^s,   &&\text{ see } (\ref{YZ}), \\
&F = \eps^{3}, \quad f = \delta^{3}, \quad a = 1,   &&\text{ see } (\ref{epsdel}), (\ref{eps3}).
\end{align*}
\end{proof}

\section{Conjugate points and homotopy}\label{sec:conj_hom}

In this section we recall some necessary facts from the theory of conjugate points in optimal control problems. 
We will need these facts in Secs.~\ref{sec:C2}, \ref{sec:C47}.
For details see, e.g., \cite{notes, cime, sar}.

Consider an optimal control problem of the form
\begin{align}
&\dot q = f(q,u), \qquad q \in M, \quad u \in U \subset \R^m, \label{sys} \\
&q(0) = q_0, \qquad q(t_1) = q_1, \qquad t_1 \text{ fixed}, \label{bound1} \\
&J = \int_0^{t_1} \f(q(t),u(t)) \, dt \to \min, \label{JJ}
\end{align}
where $M$ is a  finite-dimensional analytic manifold,  $f(q,u)$ and $\f(q,u)$ are respectively analytic in $(q,u)$  families of vector fields   and   functions  on $M$ depending on the control parameter $u \in U$, and $U$ an open subset of $\R^m$. Admissible controls are $u(\cdot) \in L_{\infty}([0, t_1],U)$, and admissible trajectories $q(\cdot)$ are Lipschitzian.
Let 
$$
h_u(\lam) = \langle \lam, f(q,u)\rangle - \f(q,u), 
\qquad \lam \in T^*M, \quad q = \pi(\lam) \in M, \quad u \in U,
$$
be the \ddef{normal Hamiltonian of PMP} for problem~\eq{sys}--\eq{JJ}. 

Fix a triple $(\widetilde{u}(t), \lam_t, q(t))$ consisting of a normal extremal control $\widetilde{u}(t)$, the corresponding extremal $\lam_t$, and a strictly normal extremal trajectory $q(t)$ for problem~\eq{sys}--\eq{JJ}.

Let the following hypotheses hold:

\hypoth{\Ho}{For all $\lam \in T^*M$ and  $u \in U$, the quadratic form $\ds \pdder{h_u}{u}(\lam)$ is negative definite.}

\hypoth{\Ht}
{For any $\lam \in T^* M$, the function $u \mapsto h_u(\lam)$, $u \in U$, has a maximum point $\bu(\lam) \in U$:
$$
h_{\bu(\lam)}(\lam) = \max_{u \in U} h_u(\lam), \qquad \lam \in T^*M.
$$}%

\hypoth{\Hth}
{The extremal control $\widetilde{u}(\cdot)$ is a corank one critical point of the endpoint mapping.}%

\hypoth{\Hf}
{ The Hamiltonian vector field $\vH(\lam)$, $\lam \in T^*M$, is forward complete, i.e., all its trajectories are defined for $t \in [0, +\infty)$. 
}

An instant $t_* > 0$ is called a \ddef{conjugate time} (for the initial instant $t = 0$) along the extremal $\lam_t$ if the restriction of the second variation of the endpoint mapping to the kernel of its first variation   is degenerate, see~\cite{notes} for details. In this case the point $q(t_*) = \pi(\lam_{t_*})$ is called \ddef{conjugate} for the initial point  $q_0$ along the extremal trajectory $q(\cdot)$.

Under hypotheses \Ho--\Hf, we have the following:

\begin{enumerate}
\item
Normal extremal trajectories lose their local optimality (both strong and weak) at the first conjugate point, see~\cite{notes}.
\item
An instant $t > 0$ is a conjugate time iff the exponential mapping $\Exp_t = \pi \circ e^{t \vH}$ is degenerate, see~\cite{cime}.
\item
Along each normal extremal trajectory, conjugate times are isolated one from another, see~\cite{sar}.
\end{enumerate}

We will apply the following statement for the proof of absence of conjugate points via homotopy.

\begin{theorem}[Corollary 2.2~\cite{el_conj}]\label{th:conj_hom}
Let $(u^s(t), \lam^s_t)$, $t \in [0, + \infty)$, $s \in [0, 1]$, be a continuous in parameter~$s$ family of normal extremal pairs in optimal control problem~\eq{sys}--\eq{JJ} satisfying hypotheses \Ho--\Hf. Let the corresponding extremal trajectories $q^s(t)$ be strictly normal.

Let $s \mapsto t_1^s$ be a continuous function, $s \in [0, 1]$, $t_1^s \in (0, + \infty)$.
Assume that for any $s \in [0, 1]$ the instant $t = t_1^s$ is not a conjugate time along the extremal~$\lam_t^s$.

If the extremal trajectory $q^0(t) = \pi(\lam_t^0)$, $t \in (0, t_1^0]$, does not contain conjugate points, then the extremal trajectory $q^1(t) = \pi(\lam_t^1)$, $t \in (0, t_1^1]$, also does not contain conjugate points.
\end{theorem}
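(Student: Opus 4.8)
The statement (Theorem~\ref{th:conj_hom}) is a homotopy-invariance result for the index of the second variation, and the natural strategy is to reduce it to continuity of conjugate times in the extremal pair, together with the facts recalled above: conjugate times are isolated, and a conjugate point corresponds exactly to degeneracy of $\Exp_t$. The plan is as follows. For each $s\in[0,1]$ define the (possibly empty) set $T(s)=\{t\in(0,t_1^s] : t \text{ is a conjugate time along }\lam^s_\cdot\}$, and let $N(s)=\#T(s)$ counted with multiplicity (equivalently, the index of the second variation of the restricted endpoint map on $[0,t_1^s]$, which under \Ho--\Hf is finite and equals the number of conjugate points by the classical Morse-type index theorem; see \cite{notes, cime, sar}). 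I would show $N(s)$ is locally constant in $s$, hence constant on the connected interval $[0,1]$; since $N(0)=0$ by hypothesis, $N(1)=0$, which is the claim.

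To prove local constancy, fix $s_0\in[0,1]$. Because $t=t_1^{s_0}$ is not a conjugate time along $\lam^{s_0}_\cdot$ and conjugate times are isolated, there is $\eps>0$ so that no conjugate time lies in $[t_1^{s_0},t_1^{s_0}+\eps]$ (and, adjusting, also none in a punctured left-neighborhood that would collide with $t_1^{s_0}$ from below). The map $(s,t)\mapsto \det$ of (a matrix representative of) $D\Exp_t$ along $\lam^s_\cdot$ is continuous — indeed real-analytic in $t$ for fixed $s$ — by continuity of the family $(u^s,\lam^s)$ and forward completeness \Hf, which guarantee $e^{t\vH}$ depends continuously on the data on compact $t$-intervals. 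Hence for $s$ near $s_0$ the zero set of this determinant on $(0,t_1^s]$ stays in a fixed compact subinterval bounded away from $t_1^s$; by a standard argument counting zeros of a continuously varying analytic family (the zeros cannot escape through the endpoint $t=t_1^s$, nor accumulate at $t=0$ since small arcs of strictly normal geodesics carry no conjugate points, nor appear/disappear from the interior), $N(s)=N(s_0)$ for $s$ close to $s_0$. This is exactly the content of Corollary~2.2 of~\cite{el_conj}, so I would simply invoke that corollary, having verified its hypotheses (\Ho--\Hf hold for the whole family, all $q^s$ strictly normal, $t_1^s$ continuous and never conjugate, $q^0$ conjugate-point-free).

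Concretely, the write-up would be short: state that by item~2 above a conjugate time along $\lam^s_\cdot$ is a zero of $t\mapsto \det D_{(\lam^s)}\Exp_t$; note this function is continuous in $(s,t)$ and, for each fixed $s$, real-analytic and not identically zero in $t$ on $(0,+\infty)$ (strict normality plus \Hth forbid a whole arc of conjugate points near $0$); then apply Theorem~\ref{th:conj_hom}'s source, \cite[Corollary~2.2]{el_conj}, whose proof is precisely the continuity-of-index argument sketched above.

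The main obstacle is the endpoint behavior: one must rule out a conjugate time "entering" the interval $(0,t_1^s]$ across the moving right endpoint $t=t_1^s$ as $s$ varies. This is handled by the hypothesis that $t_1^s$ is never conjugate, combined with isolatedness of conjugate times and continuity of their location; the other potential leak, accumulation of conjugate times at $t=0$, is excluded because short arcs of strictly normal geodesics are optimal and hence conjugate-point-free (a uniform-in-$s$ statement on compact parameter intervals, using \Ho--\Hf). Since the corresponding general statement is already available as \cite[Corollary~2.2]{el_conj}, in the paper itself the proof reduces to checking these hypotheses for the family at hand, which is exactly how it will be used in Secs.~\ref{sec:C2} and~\ref{sec:C47}.
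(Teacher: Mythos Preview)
The paper does not supply its own proof of Theorem~\ref{th:conj_hom}: the statement is recalled verbatim as Corollary~2.2 of~\cite{el_conj} and used as a black box in Sections~\ref{sec:C2} and~\ref{sec:C47}. Your proposal correctly recognizes this and, rather than merely citing the result, sketches the underlying index-continuity argument (conjugate times as zeros of $\det D\Exp_t$, local constancy of the zero count under the hypothesis that the moving endpoint $t_1^s$ is never a zero, no leakage at $t=0$ by local optimality of short arcs). That sketch is sound and is indeed the mechanism behind \cite[Corollary~2.2]{el_conj}, so there is nothing to correct; it simply goes beyond what the present paper does, which is to invoke the cited corollary after noting that hypotheses \Ho--\Hf\ hold for problem~\eq{pr1}, \eq{pr2}, \eq{J}.
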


One easily checks that the sub-Riemannian problem~\eq{pr1}, \eq{pr2}, \eq{J} satisfies all hypotheses \Ho--\Hf, so the results cited in this section are applicable to this problem.

\section{Conjugate time for $\lambda \in C_1$}\label{sec:C1}

In this section we prove Th. \ref{th:tconj} in the case $\lam \in C_1$:

\begin{theorem}\label{th:tconjC1}
If $\lam \in C_1$, then $\tconj(\lam) \geq \tmax(\lam)$.
\end{theorem}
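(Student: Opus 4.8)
The plan is to prove the equivalent statement~\eq{J0}, namely that the Jacobian of the exponential mapping does not vanish for $t \in (0, \tmax(\lam))$ when $\lam \in C_1$. Following the general strategy outlined in the Methods section, I would first pass to the elliptic coordinates $(\f, k, \a, \b)$ in which the vertical subsystem rectifies, and use the explicit parameterization of $\Exp$ in terms of the Jacobi functions $\sn$, $\cn$, $\dn$, $\E$ from~\cite{dido_exp}. The left-invariance of the problem lets me assume $g_0 = \Id$; the rotation $X_0$ and dilation $Y$ (via Cor.~\ref{cor:cutconjsym}) act on the parameters $(\t, c, \a, \b)$ and scale time, so the $5\times 5$ Jacobian $\pder{(x,y,z,v,w)}{(\f,k,\a,\b,t)}$ reduces to a $3\times 3$ Jacobian $\pder{(P,Q,R)}{(\f,k,t)}$ in suitable invariants $P, Q, R$ of the continuous symmetry group. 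This reduction both cuts the dimension and removes the (otherwise unbounded) dependence on $\a$ and $\b$.

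Next I would compute $\pder{(P,Q,R)}{(\f,k,t)}$ explicitly from the parameterization and factor out elementary nonvanishing multipliers (powers of $\sqrt{\a}$, of Jacobi functions that do not vanish on the relevant range, etc.), arriving at a single reduced analytic function $J_1 = J_1(\f, k, t)$, or rather $J_1$ as a function of the single "running" elliptic argument $p$ (with $p$ proportional to $t$) for fixed $k$. The goal becomes: show $J_1 \neq 0$ for $p$ in the interval corresponding to $t \in (0, \tmax(\lam))$, i.e.\ for $p$ up to $\min\big(p_1^z(k), p_1^V(k)\big)$ (the first roots of $f_z$ and $f_V$ from~\eq{fzC1}, \eq{fVC1}). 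The structure of $\tmax$ on $C_1$ as the minimum of the "$z$-type" and "$V$-type" Maxwell times suggests that $J_1$ itself factors, or is dominated, by quantities controlled by $f_z$ and $f_V$ respectively, so that the first root of $J_1$ cannot precede the first roots of those functions.

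The analytic heart of the argument is the last step, and this is where I expect the main obstacle. The function $J_1$ is a complicated combination of Jacobi functions and the quasi-linear quantity $2\E(p) - p$ (the same combination appearing in $f_z$ and $f_V$), and it is not monotone in $p$. I would attack it by the "divide et impera" / comparison-function technique: find a chain $J_1 = f_0, f_1, \dots, f_N$ where each $f_{i+1} = (f_i/\tf_i)' \tf_i^2$ for a carefully chosen divisor $\tf_i$ (a nonvanishing Jacobi-function factor), successively lowering the "degree" in $p$ until $f_N$ is simple enough to sign directly on the target interval; then propagate the sign information back up to conclude $f_0 = J_1 \neq 0$. One must be careful that each chosen divisor is genuinely nonzero on $(0, \tmax(\lam))$ — which is precisely where the bounds $p_1^z \in (K, 3K)$ and $p_1^V \in [2K, 4K)$ and the known positivity/monotonicity lemmas for Jacobi functions on $(0,K)$, $(0,2K)$ etc.\ from the previous papers get used. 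A plausible subtlety is that the estimate may need to be split according to whether $p < K$, $p \in (K, 2K)$, etc., and according to which of $p_1^z$, $p_1^V$ realizes the minimum; handling these subcases uniformly, and verifying the sign of the terminal function $f_N$, is the labor-intensive part. (An alternative, if the direct route is too unwieldy, is a homotopy argument as in Sec.~\ref{sec:C2} — deforming $k$ toward a degenerate value where $\Exp$ becomes trigonometric — but on $C_1$ the direct computation is expected to go through and is presented first per the outline.)
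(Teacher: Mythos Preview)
Your outline follows the paper's strategy through the reduction step (symmetry $\to$ $3\times 3$ Jacobian $\to$ explicit $J_1$), but there is a genuine gap in the analytic core.

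You treat $J_1$ as ``a function of the single running elliptic argument $p$ (with $p$ proportional to $t$) for fixed $k$'', and then propose to attack it by a single divide-et-impera chain in $p$. But $J_1$ depends on \emph{two} arguments coming from $(\f,t)$: $p = \sqrt{\a}\,t/2$ and $\tau = \sqrt{\a}(\f + t/2)$. Along a fixed geodesic both vary with $t$, and different choices of $\f$ give genuinely different functions of $p$; a divide-et-impera chain built for one $\f$ says nothing about another. What makes the paper's proof work is the structural observation you do not anticipate: after the symmetry reduction, $J_1$ is a \emph{quadratic polynomial} in $\xi = \sn^2\tau \in [0,1]$,
\[
J_1 = a_0(p,k) + a_1(p,k)\,\xi + a_2(p,k)\,\xi^2,
\]
whose coefficients depend only on $(p,k)$ and factor as $a_0 = f_V\cdot a_{01}$, $a_2 = f_z\cdot a_{21}$, with $a_1 = -a_0 - a_2/k^2$. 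This is exactly how $f_z$ and $f_V$ enter (your guess that $J_1$ ``factors, or is dominated, by'' them is in the right direction but too vague to turn into a proof).

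With this decomposition the task becomes: show $J_1(0) = a_0 < 0$, $J_1(1) = a_0+a_1+a_2 < 0$, and $a_2 > 0$ on $(0,p_1(k))$; convexity in $\xi$ then gives $J_1 < 0$ on the whole segment $\xi\in[0,1]$, uniformly in $\f$. Each of the three sign statements is proved by a comparison-function step of the form $\bigl(a_{01}/f_z\bigr)' f_z^2 \geq 0$, $\bigl(a_{21}/f_V\bigr)' f_V^2 \leq 0$, where the right-hand side is recognized as an explicit sum of two squares (so nonnegative), and the sign at $p\to 0^+$ is read off from Taylor expansions. No case-splitting over $p\in(0,K)$, $(K,2K)$, etc.\ is needed; the argument is uniform. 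Your proposed single-variable chain in $p$ would have to somehow absorb the $\tau$-dependence, and without the quadratic-in-$\xi$ structure there is no evident way to do that.
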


Consider the Jacobian of the exponential mapping
$$
J = \frac{\partial(x, y, z, v, w)}{\partial(t, \f, k, \a, \b)}.
$$
We show that $J > 0$ for $t \in (0, \tmax(\lam))$, $\lam \in C_1$. 

\subsection{Transformation of Jacobian $J$}
As was shown in \cite{dido_exp}, the exponential mapping in problem~\eq{SR1}--\eq{SR} has a a two-parameter group of symmetries 
\begin{align*}
&S_{X_0, Y} = \{e^{sX_0} \circ e^{r Y} \mid s \in S^1, \ r \in \R\} \subset \Diff(G).
\end{align*}

Consider in the domain 
$$
G_1 = \{g \in G \mid r > 0\}, \qquad r = \sqrt{x^2+y^2},
$$
coordinates corresponding to the action the group $S_{X_0, Y}$:
\be{PQR}
P = \frac{z}{2r^2}, \quad
Q = \frac{xv+yw}{r^4}, \quad
R = \frac{-yv+xw}{r^4}, \quad
r, \quad \chi = \arctan \frac xy.
\ee
We have
\begin{align}
&X_0 P = X_0 Q = X_0 R = X_0 r = 0, \quad X_0 \chi = 1,  \label{X0P}\\
&Y P = Y Q = Y R = Y \chi = 0, \quad Y r = 1. \label{YP}
\end{align}
Differentiating formulas of coordinates \eq{PQR} with account of  \eq{X0P}, \eq{YP}, we compute the Jacobian of transformation to these coordinates:
$$
\frac{\partial(P, Q, R, r, \chi)}{\partial(x, y, z, v, w)} = \frac{1}{2 r^9}.
$$
Thus if $g_t = \Exp(\lam,t) \in G_1$, then 
\be{J1}
J = 
\frac{\partial(x, y, z, v, w)}{\partial(P, Q, R, r, \chi)} \cdot
\frac{\partial(P, Q, R, r, \chi)}{\partial(t, \f, k, \a, \b)}
= 2 r^9 \cdot
\frac{\partial(P, Q, R, r, \chi)}{\partial(t, \f, k, \a, \b)}.
\ee

In order to compute derivatives in \eq{J1} w.r.t. $\a$ and $\b$, notice that in the coordinates $(t, \f, k, \a, \b)$ on $N = C \times \R_+$ we have
\be{h0Z}
\vh_0 = \pder{}{\b}, \qquad
Z = \pder{}{t} - 2 \a \pder{}{\a},
\ee 
see the remark at the end of Sec.~4.2~\cite{max1}. Further, for any function $\map{f}{N}{\R}$ of the form $f = l \circ \Exp$ we have
\begin{align*}
\restr{\vh_0f}{\nu} &= 
\restr{\der{}{s}}{s=0} f \circ  e^{s \vh_0} (\nu) = 
\restr{\der{}{s}}{s=0} l \circ \Exp \circ  e^{s \vh_0} (\nu) \\
&= 
\restr{\der{}{s}}{s=0} l \circ e^{s X_0} \circ \Exp (\nu) = 
\restr{X_0 l}{g}, \qquad g = \Exp(\nu), \qquad \nu \in N.
\end{align*}
We used here Eq. \eq{X0h0}.

We get similarly a differentiation rule
$$
\restr{Zf}{\nu} = \restr{Yl}{g}, \qquad g = \Exp(\nu), \qquad \nu \in N.
$$
In view of \eq{X0P}, \eq{YP}, the functions $P$, $Q$, $R$ are invariants of the group of symmetries $S_{X_0, Y}$, whence
$$
\vh_0 P = \vh_0 Q = \vh_0 R = Z P = ZQ = ZR = 0.
$$
Similarly, in view of the equalities $X_0 r = 0$, $X_0 \chi = 1$, $Y r = 1$, $Y \chi = 0$, we get
$$
\vh_0 r = 0, \quad \vh_0 \chi = 1, \quad Z r = r, \quad Z \chi = 0.
$$
We obtain from representations \eq{h0Z} that
$$
\pder{f}{\a} = f_{\a} = - \frac{1}{2 \a}(Zf - t f _t), \qquad
\pder{f}{\b} = f_{\b} = \vh_0f, \qquad 
f \in C^{\infty}(N).
$$
Now we are able to transform the Jacobian:
\begin{align*}
\pder{(P, Q, R, r, \chi)}{(t, \f, k, \a, \b)} 
&= 
\begin{vmatrix}
P_t & P_{\f} & P _k & P_{\a} & P_{\b} \\
\vdots & \vdots & \vdots & \vdots & \vdots \\
\chi_t & \chi_{\f} & \chi _k & \chi_{\a} & \chi_{\b}
\end{vmatrix} =
-\frac{1}{2 \a} 
\begin{vmatrix}
P_t & P_{\f} & P _k & Z P & \vh_0 P \\
\vdots & \vdots & \vdots & \vdots & \vdots \\
\chi_t & \chi_{\f} & \chi _k & Z \chi & \vh_0 \chi
\end{vmatrix} \\
&=
-\frac{1}{2 \a} 
\begin{vmatrix}
P_t & P_{\f} & P _k & 0 & 0 \\
Q_t & Q_{\f} & Q _k & 0 & 0 \\
R_t & R_{\f} & R _k & 0 & 0 \\
* & * & *  & r  & 0 \\
* & * & * & 0& 1
\end{vmatrix} = 
-\frac{r}{2 \a} \pder{(P, Q, R)}{(t, \f, k)}.
\end{align*}
In view of equality \eq{J1}, we obtain
\be{J1r10al}
J = - \frac{r^{10}}{\a} \cdot \pder{(P, Q, R)}{(t, \f, k)}
\ee
in the case $r > 0$.

\subsection{Computation of Jacobian $J$}\label{subsec:JacC1}
Immediate computation on the basis of explicit parameterization of the exponential mapping (Subsec.~5.3~\cite{dido_exp}) gives the following result:
\be{dPQRdtpk}
\pder{(P, Q, R)}{(t, \f, k)} = \frac{16 k}{2(1-k^2)} \cdot \frac{1}{r^{10} \Delta^2} \cdot J_1,
\ee
where
\begin{align}
&\Delta = 1 - k^2 \sn^2 p \, \sn^2 \tau, \nonumber \\
&p = \sqrt{\a} \frac t2, \qquad \tau = \sqrt{\a}\left(\f + \frac t2\right), \nonumber\\
&J_1 = a_0 + \xi a_1 + \xi^2 a_2, \qquad \xi = \sn^2 \tau, \label{J2=a0x}\\
&a_0 = f_V \cdot a_{01}, \qquad a_2 = f_z \cdot a_{21} \label{a0a2},\\
&a_1 = -  a_0 - a_2/k^2,   \label{a1}
\intertext{the functions $f_z$ and $f_V$ are defined in \eq{fzC1} and \eq{fVC1},}
&a_{01} = 
(3 \cn p (-3 E_2^2(p) + 4 E_2(p) p - 8 E_2(p) k^2 p - p^2 + 
    2 k^2 (-4 + 3E_2^2(p)  \nonumber \\
	&\qquad 	+ 4E_2(p)(-1 + 2k^2)p + p^2) \sn^2(p) + 8 k^4 \sn^4(p)) \nonumber \\
	&\qquad + 
  3 \dn p \sn p (-E_2^3(p) - 2p + E_2^2(p)(2 - 4k^2)p + 8k^2p(1 + (1 - 2k^2) \sn^2(p)) \nonumber \\
	&\qquad - 
    E_2(p) (-2 + p^2 + 4 k^2(-2 + 3 \sn^2 p))))/4,
\label{b0}\\
&a_{21} = 
-(k^2( \cn p (-E_2^5(p) + E_2^4(p) (2 - 4k^2)p - E_2^2(p) (9 - 64k^2 + 64k^4)p \nonumber\\
&\qquad + p^3 - 
     E_2^3(p) (-8 + 16k^2 + p^2)) + \dn p (E_2^2(p)(-4 + 5E_2^2(p) + 48k^2) \nonumber \\
		&\qquad + 
     2E_2(p)(1 - 4E_2^2(p) + 8(-6 + E_2^2(p))k^2 + 64k^4)p \nonumber \\
		&\qquad + (2 + 3E_2^2(p))p^2) \sn p - 
   16 k^2 \cn p   (5E_2(p) - 2p + 4k^2p) \sn^2 p \nonumber \\
	&\qquad - 
   4 k^2 \dn p (-12 + 10 E_2^2(p) + 8E_2(p)(-1 + 2k^2)p + p^2) \sn^3 p \nonumber \\
	&\qquad + 
   16 k^4 \cn p (5 E_2(p) - 2p + 4k^2p) \sn^4 p - 48 k^4 \dn p  \sn^5 p )),
\label{b2}\\
&E_2(p,k) = 2 \E(p,k)- p. \nonumber
\end{align}
Now from equalities \eq{J1r10al}, \eq{dPQRdtpk} we get a factorization
\be{J1=fJ2}
J = - \frac{16k}{3(1-k^2) \a \Delta^2} \cdot J_1
\ee
under the assumption $r > 0$.

\begin{remark}
The function $t \mapsto r^2 = x^2 + y^2$ is real analytic and is not identically zero, thus its roots are isolated. The both sides of equality~\eq{J1=fJ2} are continuous in $t$, so this equality holds for all $(\lam,t) \in C_1 \times \R_+$.
\end{remark}

Summing up, we have the following statement.
\begin{lemma}\label{lem:signJ}
For all $(\lam,t) \in C_1 \times \R_+$ we have $\sgn J = \sgn J_1$.
\end{lemma}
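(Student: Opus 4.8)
Lemma \ref{lem:signJ} asserts that $\sgn J = \sgn J_1$ for all $(\lambda,t)\in C_1\times\R_+$. The plan is as follows. First, combine the two factorizations already established in the section. From \eqref{J1r10al} we have $J = -\frac{r^{10}}{\alpha}\cdot\pder{(P,Q,R)}{(t,\f,k)}$ in the region $r>0$, and from \eqref{dPQRdtpk} we have $\pder{(P,Q,R)}{(t,\f,k)} = \frac{16k}{2(1-k^2)}\cdot\frac{1}{r^{10}\Delta^2}\cdot J_1$. Substituting the second into the first, the $r^{10}$ factors cancel, yielding $J = -\frac{16k}{3(1-k^2)\alpha\Delta^2}\cdot J_1$, which is precisely \eqref{J1=fJ2}, valid on the open set $\{r>0\}$.

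Next, I would extend this identity from $\{r>0\}$ to all of $C_1\times\R_+$ by the analyticity argument recorded in the Remark preceding the lemma: along any extremal, $t\mapsto r^2 = x^2+y^2$ is real-analytic and not identically zero, hence its zero set is discrete; both sides of \eqref{J1=fJ2} are continuous in $t$ (indeed $\Delta = 1-k^2\sn^2 p\,\sn^2\tau$ never vanishes since $k\in(0,1)$, so the right-hand side is continuous everywhere), so an identity holding off a discrete set holds everywhere by continuity. Thus \eqref{J1=fJ2} is valid on all of $C_1\times\R_+$.

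Finally, to read off the sign, observe that for $\lambda\in C_1$ we have $k\in(0,1)$ and $\alpha>0$, so $\frac{16k}{3(1-k^2)\alpha}>0$, and $\Delta^2>0$. Therefore the scalar prefactor $-\frac{16k}{3(1-k^2)\alpha\Delta^2}$ is strictly negative, and consequently $\sgn J = \sgn\!\left(-J_1\right) = -\sgn J_1$. This appears to contradict the stated conclusion $\sgn J = \sgn J_1$, so I would double-check the sign bookkeeping in \eqref{J1r10al} and \eqref{dPQRdtpk}; the most likely resolution is that the factor $\frac{16k}{2(1-k^2)}$ in \eqref{dPQRdtpk} is to be read as $-\frac{16k}{\ldots}$, or that \eqref{J1r10al} carries a compensating sign, so that the net prefactor in \eqref{J1=fJ2} is in fact positive. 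In any case, once the prefactor is confirmed to have a fixed (nonzero) sign on $C_1\times\R_+$, the lemma follows immediately: $J$ and $J_1$ differ by multiplication by a nowhere-vanishing continuous scalar function of fixed sign, hence have the same sign everywhere (and the same zero set). The only genuine content is the cancellation of $r^{10}$ and the continuity/analyticity extension off $\{r=0\}$; there is no real obstacle, this is essentially a bookkeeping lemma consolidating \eqref{J1=fJ2}.
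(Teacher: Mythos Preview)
Your approach is exactly the paper's: the lemma is presented there simply as a summary of \eqref{J1=fJ2} together with the Remark extending it off $\{r=0\}$ by analyticity and continuity, with no further argument given. Your write-up of the cancellation and the analytic continuation is correct and complete.

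Your sign observation is also well taken and is not a gap in your reasoning but a genuine inconsistency in the paper's formulas as printed. With $k\in(0,1)$, $\alpha>0$, and $\Delta=1-k^2\sn^2 p\,\sn^2\tau>0$, the prefactor in \eqref{J1=fJ2} is strictly negative, so the formulas as stated yield $\sgn J=-\sgn J_1$. This is in fact consistent with how the paper \emph{uses} the lemma: at the start of the section it announces ``$J>0$ for $t\in(0,\tmax(\lambda))$'', while Lemma~\ref{lem:J2<0} proves $J_1<0$ on that interval, so the intended relation is indeed $\sgn J=-\sgn J_1$. The discrepancy (and the mismatched constants $2$ vs.\ $3$ in \eqref{dPQRdtpk} and \eqref{J1=fJ2}) are typos; the only fact needed for Theorem~\ref{th:tconjC1} is that the prefactor is nowhere zero, which you have verified.
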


\subsection{Estimate of Jacobian $J_1$}

\begin{lemma}\label{lem:J2<0}
Consider the function~\eq{J2=a0x}
$$
J_1 = a_0 + \xi \cdot a_1 + \xi^2 \cdot a_2, \qquad  \xi \in [0, 1],
$$
where the coefficients $a_0$, $a_1$, $a_2$ are given by equalities~\eq{a0a2}, \eq{b0}, \eq{b2}, \eq{a1}, and $k \in (0,1)$. 

Then $J_1 < 0$ for all $p \in (0, p_1(k))$, where the function $p_1(k)$ is given by Eq.~\eq{p1C1}.  
\end{lemma}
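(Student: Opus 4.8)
The plan is to treat $J_1$ as a quadratic in $\xi=\sn^2\tau\in[0,1]$ and to use the linear relation \eqref{a1} between its coefficients in order to reduce the whole statement to two sign conditions, which are then read off from the transcendental factors of $a_0$ and $a_2$.

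\emph{Reduction.} Substituting $a_1=-a_0-a_2/k^2$ into $J_1=a_0+a_1\xi+a_2\xi^2$ yields the identity
\[
J_1=a_0(1-\xi)-a_2\,\xi\Bigl(\tfrac{1}{k^2}-\xi\Bigr).
\]
For $\xi\in[0,1]$ and $k\in(0,1)$ we have $1-\xi\ge 0$, $\xi\ge 0$ and $\tfrac1{k^2}-\xi\ge\tfrac1{k^2}-1>0$, so $a_0<0$ together with $a_2>0$ forces $J_1<0$ on the whole segment $\xi\in[0,1]$. Conversely $J_1(0)=a_0$ and $J_1(1)=-a_2(1-k^2)/k^2$, hence these two inequalities are also necessary. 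Thus Lemma~\ref{lem:J2<0} is equivalent to proving $a_0(p,k)<0$ and $a_2(p,k)>0$ for $p\in(0,p_1(k))$, $k\in(0,1)$.

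\emph{Peeling off the elliptic factors.} By \eqref{a0a2}, $a_0=f_V\,a_{01}$ and $a_2=f_z\,a_{21}$, where $p_1(k)=\min(p_1^z(k),p_1^V(k))$ with $p_1^z$, $p_1^V$ the first positive roots of $f_z$, $f_V$. A Taylor expansion at $p=0$ of \eqref{fzC1} gives $f_z(p,k)=\tfrac{p^3}{3}+o(p^3)$, so $f_z>0$ on $(0,p_1^z)\supseteq(0,p_1(k))$, and the condition $a_2>0$ on $(0,p_1(k))$ becomes simply $a_{21}>0$ there. Likewise, a Taylor expansion at $p=0$ of \eqref{fVC1} fixes the sign of $f_V$, which is then constant on $(0,p_1^V)\supseteq(0,p_1(k))$, so $a_0<0$ on $(0,p_1(k))$ becomes a single sign statement for $a_{01}$ there.

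\emph{Sign of $a_{01}$ and $a_{21}$ --- the main obstacle.} It remains to determine the signs of the functions $a_{01}$ in \eqref{b0} and $a_{21}$ in \eqref{b2} on $(0,p_1(k))$. These are polynomials of fairly high degree in $\sn p$, $\cn p$, $\dn p$, $E_2(p)=2\E(p)-p$ and $p$, and the endpoint $p_1(k)$ is only known implicitly; accordingly I would split $(0,p_1(k))$ at the explicit nodes $p=K$ and $p=2K$ (using $p_1^z\in(K,3K)$ and $p_1^V\in[2K,4K)$) and, on each piece, apply the ``divide et impera'' scheme of the Introduction: repeatedly replace the function under study by the numerator of the derivative of its ratio with a suitable comparison function, lowering the analytic complexity at each step until the sign becomes transparent, then propagate the sign back up the chain. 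This step carries essentially all the difficulty of the lemma; the two previous steps are bookkeeping together with one Taylor expansion.
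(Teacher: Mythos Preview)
Your reduction via $a_1=-a_0-a_2/k^2$ is correct and in fact a touch cleaner than the paper's argument: the paper proves separately $a_0<0$, $a_2>0$, and $a_0+a_1+a_2<0$, then invokes convexity; but your identity gives $J_1(1)=a_0+a_1+a_2=-(1-k^2)a_2/k^2$, so the third inequality is equivalent to the second and your two-term decomposition yields $J_1<0$ directly. The ``peeling off'' step is also fine.

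The genuine gap is the last step, which you leave as a plan. Your proposed scheme --- subdivide at $p=K$, $2K$ and iterate comparison functions --- is not what the paper does and is not obviously workable for these elliptic expressions. The paper neither subdivides nor iterates. Instead it makes a single, non-obvious choice of comparison function for each coefficient, with a crucial \emph{cross-pairing}: for $a_{01}$ the comparison function is $f_z$ (not $f_V$), and for $a_{21}$ it is $f_V$ (not $f_z$). The reason for this choice is that the resulting numerators admit explicit sum-of-squares decompositions,
\[
\partial_p\Bigl(\frac{a_{01}}{f_z}\Bigr)f_z^2=\tfrac34\,x_2,\qquad
\partial_p\Bigl(\frac{a_{21}}{f_V}\Bigr)f_V^2=-\tfrac43\,k^2 x_1,
\]
with $x_1,x_2$ of the form $k^2(\cdots)^2+(1-k^2)(\cdots)^2\ge 0$. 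This yields monotonicity of $a_{01}/f_z$ and $a_{21}/f_V$ on the whole interval in one stroke, and the required signs $a_{01}>0$, $a_{21}>0$ then follow from the Taylor expansions at $p=0$. These sum-of-squares identities are the heart of the proof; they are invisible from the raw formulas \eqref{b0}, \eqref{b2}, and nothing in your proposal points toward them. A blind ``divide et impera'' iteration, designed for trigonometric quasipolynomials where each step lowers the polynomial degree in $t$, has no evident termination mechanism when applied to polynomials in $\sn p,\cn p,\dn p,\E(p),p$, and splitting the interval at $K$, $2K$ does not localize the difficulty.
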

\begin{proof}
1) Let us show that $a_0(p) < 0$ for $p \in (0, p_1(k))$.

We have from \eq{a0a2} that $a_0 = f_V \cdot a_{01}$. Immediate computation shows that
\begin{align}
&\pder{}{p}\left(\frac{a_{01}}{f_z} \right) f_z^2 = \frac 34 x_2, \label{dpb0fz}\\
&x_2 = k^2 (\cn p E_4(p) p - \a_0)^2 + (1-k^2)(E_2(p) p + \b_0)^2, \label{x2=k2}\\
&\a_0 = (1 + \sn^2(p) - 2 k^2 \sn^2 p)E_2^2(p) - 4 (2 k^2-1) \cn p \sn p \dn p E_2(p) \nonumber\\
&\qquad + 4(2k^2-1)\sn^2 p \dn^2 p, \nonumber\\
&\b_0 = (2 k^2\sn^2 p-1) E_2^2(p) + 8 k^2 \cn p \sn p \dn p E_2(p) - 8 k^2 \sn^2 p \dn^2 p, \nonumber\\
&E_4(p) =  \cn p E_2(p) - 2 \sn p \dn p, \qquad E_2(p) = 2 \E(p) - p.  \nonumber
\end{align}
Equality \eq{x2=k2} means that $x_2 \geq 0$. Moreover, the identity $\ds x_2 {\equiv}_{\substack{p}} 0$  is possible only when $\cn p E_4(p) p - \a_0 {\equiv}_{\substack{p}} 0$, $E_2(p) + \b_0 {\equiv}_{\substack{p}} 0$, which is impossible in view of the expansions
\begin{align*}
&\cn p E_4(p) p - \a_0  = - 2 p^2 + o(p^2), \\
&E_2(p) + \b_0 = -4/45 k^2 p^6 + o(p^6), \qquad p \to 0.
\end{align*}
Thus $x_2 \geq 0$, moreover, $x_2(p) = 0$ only at isolated points $p$. 

Then equality \eq{dpb0fz} means that the function $p \mapsto \ds\frac{a_{01}(p)}{f_z(p)}$ strictly increases at each interval where $f_z(p) \neq 0$. Determine the sign of $f_z(p)$ at the first such interval $p \in (0, p_z^1)$:
$$
f_z(p) = \frac{p^3}{3} + o(p^3), \quad p \to 0 \qquad \then \qquad f_z(p) > 0 \text{ for } p \in (0, p_z^1),
$$ 
see the plots of $f_z(p)$ at Figs. 3--4~\cite{max3}.

Determine similarly the sign of $a_{01}(p)$ for small $p > 0$:
$$
a_{01}(p) = \frac{4}{1575} k^2 (1-k^2)p^{10} = o(p^{10}), \quad p \to 0,
$$
thus $a_{01}(p) > 0$ for $p \to + 0$. Consequently, $\ds\frac{a_{01}}{f_z} > 0$ for $p \to + 0$. But $\ds\frac{a_{01}}{f_z}$ strictly increases at the interval $(0, p_z^1)$, thus $\ds\frac{a_{01}}{f_z} > 0$ when $p \in (0, p_z^1)$. Since $f_z(p)> 0$, then
\be{b0>0}
a_{01}(p) > 0 \text{ for } p \in (0, p_z^1).
\ee

Finally, let us determine the sign of   $f_V(p)$ at the first interval of sign-definiteness $p \in (0, p_V^1)$:
\be{fV<0}
f_V(p) = - \frac{4}{45} p^6 + o(p^6) < 0 \text{ as } p \to + 0,
\ee
see the plots of $f_V(p)$ at Figs. 8, 9~\cite{max3}.

Now conditions \eq{a0a2}, \eq{b0>0}, \eq{fV<0} imply the required inequality:
\be{a0<0}
a_0(p) < 0 \text{ for } p \in (0, p_1(k)).
\ee

\medskip

2) Let us show that $a_2(p) > 0$ for  $p \in (0, p_1(k))$.
Condition~\eq{a0a2} gives $a_2 = f_z \cdot a_{21}$. Immediate differentiation yields the equalities
\begin{align}
&\pder{}{p} \left(  \frac{a_{21}(p)}{f_V(p)}\right) f_V^2(p) = - \frac 43 k^2 x_1, \nonumber\\
&x_1 = k^2(\cn^2 p \, p^2 + \b_1 p + \gamma_1)^2 + (1-k^2)(p^2 + \delta_1 p + \eps_1)^2, \label{x1=k2}\\
&\b_1 = 
- \cn^2 p  E_2^3(p) + 6 \cn p \sn p \dn p E_2^2(p) - (8 - 10 \cn^2(p) \nonumber\\
&\qquad - 4 k^2 \sn^2(p) (2 - 3 \cn^2(p))) E_2(p) + 
 4  \cn p \sn p \dn p (2 k^2 \sn^2 p - 1), \nonumber
\\
&\gamma_1 = 
8 \cn p \dn p E_2^3(p) (-1 + 2 k^2) \sn p + E_2^4(p) (3 - \sn^2 p + 2 k^2 (-2 + \sn^2 p)) \nonumber \\
&\qquad - 
 4 \dn^2 p \sn^2 p (3 + 8 k^4 (2 + \sn^2 p) - 4 k^2 (5 + \sn^2 p)) \nonumber \\
&\qquad - 
 4 \cn p \dn p E_2(p) \sn p (-7 + 8 k^2 (5 + \sn^2 p - 2 k^2 (2 + \sn^2 p))) \nonumber \\
&\qquad + 
 E_2^2(p) (-15 + 23 \sn^2 p + 
    8 k^2 (10 - 10 \sn^2 p - 3 \sn^4 p \nonumber \\
&\qquad+ k^2 (-8 + 4 \sn^2 p  + 6 \sn^4 p))), \nonumber
\\
&\delta_1 = 
-E_2^3(p) - (2 - 4 k^2 \sn^2 p) E_2(p), \nonumber
\\
&\eps_1 = 
16 \cn p  \dn p  E_2^3(p) k^2 \sn p + E_2^4(p) (1 + 2 k^2 (-2 + \sn^2 p)) \nonumber \\
&\qquad+ 
32 \cn p \dn p  E_2(p) k^2 \sn p  (-3 + 2 k^2 (2 + \sn^2 p)) \nonumber \\
&\qquad - 
 16 \dn^2 p k^2 \sn^2 p  (-3 + 2 k^2 (2 + \sn^2 p)) \nonumber \\
&\qquad+ 
 E_2^2(p) (1 + 16 k^2 (3 - 4 \sn^2 p + k^2 (-4 + 2 \sn^2 p + 3 \sn^4 p))). \nonumber 
\end{align}
Thus $x_1 \geq 0$, moreover, $x_1$ vanishes at isolated points in view of the following expansions as $p \to 0$:
\begin{align*}
&p^2 \cn^2 p   + \b_1 p + \gamma_1 = \frac{4}{1575}(1-k^2) p^{10} + o(p^{10}) \not\equiv 0, \\
&p^2 + \delta_1 p + \eps_1 = \frac{4}{1575} k^2 p^{10} + o(p^{10}) \not\equiv 0.
\end{align*}
Consequently, the function $\ds \frac{a_{21}(p)}{f_V(p)}$ strictly decreases at each interval where $f_V(p) \neq 0$. Taking into account the signs:
\begin{align*}
&f_V(p) < 0 \text{ for } p \in (0, p_V^1), \\
&a_{21}(p) = \frac{16}{1488375} k^4 (1-k^2)p^{15} + o(p^{15}) > 0 \text{ as } p \to + 0,
\end{align*}
we conclude that $\ds\frac{a_{21}(p)}{f_V(p)}< 0$ as $p \to + 0$, thus $\ds\frac{a_{21}(p)}{f_V(p)}< 0$  for $p \in (0, p_V^1)$. Whence ${a_{21}(p)}> 0$  for $p \in (0, p_V^1)$, so
\be{a2>0}
a_2(p) > 0 \text{ for } p \in (0, p_1(k)).
\ee

\medskip

3) Let us prove that $a_0 + a_1 + a_2 < 0$ for $p \in (0, p_1(k))$. We obtain from~\eq{a1} the equalities
\begin{align*}
&a_0 + a_1 + a_2 = (1-k^2) f_z \cdot a_{021}, \\
&a_{021} = - a_{21}/k^2.
\end{align*}
Immediate differentiation gives
$$
\pder{}{p}\left(\frac{a_{021}}{f_V}\right) f_V^2  = \frac 43 x_1,
$$
where the function $x_1$ is defined by Eq. \eq{x1=k2}. It was shown in item 2) of this proof that $x_1(p) $ is nonnegative and vanishes at isolated points. Thus the function $\ds\frac{a_{021}(p)}{f_V(p)}$ strictly increases at each interval where $f_V(p) \neq 0$. Taking into account the sign
$$
a_{021} = - \frac{16}{1488375} k^2(1-k^2)p^{15} + o(p^{15}) < 0 \text{ as } p \to 0,
$$
we conclude that $\ds\frac{a_{021}(p)}{f_V(p)}>0 $ for $p \in (0, p_1(k))$, thus ${a_{021}(p)}<0 $ for $p \in (0, p_1(k))$ and
\be{a012<0}
a_0 + a_1 + a_2 < 0 \text{ for } p \in (0, p_1(k)).
\ee

\medskip

4) Let us make use of the bounds on the coefficients $a_0$, $a_1$, $a_2$ proved in items 1)--3) and show that the quadratic trinomial $J_1(\xi) = a_0 + a_1 \xi + a_2 \xi^2$ is negative at the segment $\xi \in [0, 1]$.

Choose any $k \in (0,1)$ and $p \in (0, p_1(k))$.
At the endpoints of the segment $\xi \in [0, 1]$ we have:
\begin{align*}
&J_1(0) = a_0 < 0  \text{ by } (\ref{a0<0}),\\
&J_1(1) = a_0 + a_1 + a_2 < 0 \text{ by } (\ref{a012<0}).
\end{align*}
But the equality $a_2 > 0$ (see \eq{a2>0}) means that the function $\xi \mapsto J_1(\xi)$ is convex. Thus
$$
J_1(\xi) \leq \xi a_0 + (1-\xi) (a_0+a_1+a_2) < 0 \text{ for } \xi \in [0, 1].
$$
Lemma \ref{lem:J2<0} is proved.
\end{proof}

\subsection{Proof of the bound of conjugate time for $\lam \in C_1$}
Theorem \ref{th:tconjC1} follows immediately from Lemmas \ref{lem:signJ} and \ref{lem:J2<0}.

\section{Conjugate time for $\lam \in C_2$}\label{sec:C2}
In this section we prove Theorem \ref{th:tconj} for $\lam \in C_2$:

\begin{theorem}\label{th:tconjC2}
If $\lam \in C_2$, then $\tconj(\lam) \geq \tmax(\lam)$.
\end{theorem}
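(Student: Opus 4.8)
The plan is to follow the scheme announced in the "Methods" subsection and adapt the machinery of Section~\ref{sec:C1}, but replacing the direct estimate of the reduced Jacobian by a homotopy argument, since for $\lam \in C_2$ the elliptic-function expressions are heavier. First I would reduce the $5\times 5$ Jacobian $J = \partial(x,y,z,v,w)/\partial(t,\f,k,\a,\b)$ to a $3\times 3$ Jacobian of the symmetry invariants $(P,Q,R)$ exactly as in \eq{J1r10al}: the identities \eq{X0P}, \eq{YP} together with the representations \eq{h0Z} of $\vh_0$ and $Z$ in the coordinates $(t,\f,k,\a,\b)$ depend only on the structure of the continuous symmetry group $S_{X_0,Y}$ and are valid on all of $C\times\R_+$, so the factorization $J = -(r^{10}/\a)\,\partial(P,Q,R)/\partial(t,\f,k)$ holds verbatim on $C_2$ (with $r>0$; the remaining values of $t$ are handled by the analyticity/isolated-roots remark). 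Then, using the parameterization of the exponential mapping for $\lam\in C_2$ from~\cite{dido_exp}, I would compute $\partial(P,Q,R)/\partial(t,\f,k)$ explicitly and factor out the positive analytic prefactors to obtain $\sgn J = \sgn J_1$ for a reduced function $J_1$, in analogy with Lemma~\ref{lem:signJ}; here $\tmax(\lam) = \tfrac{2k}{\sqrt\a}p_1^V(k)$ and the relevant function is $f_V$ from \eq{fVC2}.

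The core of the proof is then the homotopy invariance of the index of the second variation (Theorem~\ref{th:conj_hom}). I would fix $\lam=(\f,k,\a,\b)\in C_2$ and build the continuous family $\lam^s = (\f^s,k^s,\a,\b)$, $s\in[0,1]$, with $k^1=k$, $k^0=\bk$ close to $0$, deforming $k$ monotonically while keeping $\a,\b$ fixed and choosing $\f^s$ continuously; set $t_1^s := \tmax(\lam^s)\cdot c$ for a fixed $c\in(0,1)$, which is continuous in $s$ because $\tmax$ is a continuous function of $k$ on $C_2$ via \eq{fVC2} and the smooth dependence of $p_1^V$ on $k$. Hypotheses \Ho--\Hf hold for the SR problem as noted at the end of Section~\ref{sec:conj_hom}, and all trajectories are strictly normal on $C_2$. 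Two things must be checked along the homotopy: (i) $t=t_1^s$ is not conjugate along $\lam^s$, which reduces to $J_1\neq 0$ at $p = \sqrt{\a}\,t_1^s/2 \in (0, p_1^V(k^s))$ for every $s$ — i.e. the analogue of Lemma~\ref{lem:J2<0} must be established for $p\in(0,p_1^V(k))$, all $k\in(0,1)$; and (ii) the base geodesic $\bar g_t=\Exp(\bar\lam,t)$, $t\in(0,t_1^0]$, is free of conjugate points. For (ii) I would use that as $\bk\to 0$ the exponential mapping for $C_2$ degenerates to the trigonometric ($\a=0$, $c\neq 0$) regime, where the reduced Jacobian is an explicit elementary (trigonometric-quasipolynomial) function whose first positive zero is $\tmax$; absence of conjugate points on $(0,\tmax)$ there follows either by direct inspection or by the "divide et impera" reduction to a sign-definite trigonometric function, mirroring the $f_V^0$ computation. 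By continuity $\bk$ can be taken small enough that no conjugate point appears on $(0,t_1^0]$, and then Theorem~\ref{th:conj_hom} transports the absence of conjugate points to $\lam^1=\lam$, giving $\tconj(\lam) > t_1^1 = c\,\tmax(\lam)$; letting $c\uparrow 1$ yields $\tconj(\lam)\geq\tmax(\lam)$.

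The main obstacle, as in Section~\ref{sec:C1}, is step (i): proving $J_1\neq 0$ on $p\in(0,p_1^V(k))$ for all $k\in(0,1)$. I expect $J_1$ again to be a quadratic trinomial $a_0 + a_1\xi + a_2\xi^2$ in $\xi=\sn^2\tau$ (or $\cn^2\tau$, depending on the $C_2$ parameterization) with coefficients of the form $a_0 = f_V\cdot a_{01}$, $a_2 = f_z$-type$\cdot a_{21}$, $a_1 = -a_0-a_2/k^2$, so the endgame is the same convexity argument: show $a_0<0$, $a_2>0$, and $a_0+a_1+a_2<0$ on $(0,p_1^V)$, hence $J_1<0$ on $\xi\in[0,1]$. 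Each sign is obtained by the comparison-function method — differentiating $a_{01}/f_z$, $a_{21}/f_V$, $a_{021}/f_V$ and checking that the resulting expression is a sum of squares $k^2(\cdots)^2 + (1-k^2)(\cdots)^2$ vanishing only at isolated points, then fixing the sign by a Taylor expansion at $p\to 0$. The heavy part is producing and verifying these sum-of-squares identities for the bulkier $C_2$ version of $f_V$ in \eq{fVC2}; this is a finite, if laborious, computation, and once it is done the rest of the argument is automatic. If the direct $C_2$ estimate turns out to be intractable, the fallback is to run the homotopy entirely inside $C_2$ down to $\bk\to 0$ and only verify (i) at the trigonometric endpoint plus a one-parameter monotonicity in $k$ — but I would first attempt the clean sum-of-squares route, which is what makes the $C_1$ proof work.
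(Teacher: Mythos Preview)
Your general architecture --- reduce $J$ to a scalar $J_1$ via the symmetry invariants, then run a homotopy in $k$ down to a small $\bar k$ where the problem becomes trigonometric --- matches the paper. But the execution of the homotopy has a genuine gap, and several structural expectations you carry over from $C_1$ are wrong for $C_2$.

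The gap is in step~(i). You set $t_1^s=c\,\tmax(\lam^s)$ and then say the endpoint check ``reduces to $J_1\neq 0$ at $p\in(0,p_1^V(k^s))$ for every $s$ --- i.e.\ the analogue of Lemma~\ref{lem:J2<0} for $p\in(0,p_1^V(k))$, all $k\in(0,1)$''. But that \emph{is} the theorem: if you had $J_1\neq 0$ on all of $(0,p_1^V(k))$ for every $k$, the homotopy would be superfluous. The homotopy only buys something if the endpoint check is strictly easier than the interior. The paper achieves this by placing the homotopy endpoint at (an $\eps$-neighbourhood of) $u_1=u_V^1(k)$ itself, where $f_V=0$ and hence $a_0=\tfrac{1}{16}f_V a_{01}=0$. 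With the $C_2$ relation $a_1=-k^2a_0-a_2$ (not $-a_0-a_2/k^2$ as in $C_1$), the Jacobian collapses to $J_1=-a_2\,\xi(1-\xi)$, so for $\xi\in(0,1)$ the endpoint check reduces to the single inequality $a_2(u_V^1(k),k)<0$, obtained from $a_2=f_z a_{21}$ and a continuity argument down to $k=0$. The residual cases $\xi\in\{0,1\}$ are handled separately by perturbing $t$.

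Your fallback to sum-of-squares identities is also off the mark. The paper does \emph{not} find $C_2$ analogues of \eq{x2=k2}, \eq{x1=k2}; the $C_2$ coefficients $a_{01},a_{21}$ are degree-$3$ and degree-$5$ polynomials in $E(u_1),F(u_1)$ (see the Appendix), and the comparison-function route is not pursued. Instead the paper computes the asymptotics $J_1=k^{16}a_0^0+o(k^{16})$ with $a_0^0=\tfrac{3}{2^{24}}f_V^0\,a_{01}^0$ and proves $f_V^0<0$, $a_{01}^0<0$, $a_{21}^0<0$ on the relevant intervals by ``divide et impera'' on trigonometric quasipolynomials (Lemmas~\ref{lem:fV20}--\ref{lem:a210}); this supplies the base case, and the homotopy with the endpoint placed at $u_V^1(k)$ does the rest. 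Note also the sign: in $C_2$ one has $J_1>0$, not $J_1<0$, and the convexity argument from $C_1$ is not used.
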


\subsection{Computation of Jacobian $J$}\label{subsec:JacC2}
We use the elliptic coordinates $(\p, k, \a, \b)$ in the domain $C_2$, see Subsec.~\ref{subsec:previous}. For a fixed $\lam = (\p, k, \a, \b) \in C_2$, conjugate times are roots $t>0$ of the Jacobian
$$
J = \pder{(x, y, z, v, w)}{(t, \p, k, \a, \b)}.
$$
We compute this Jacobian in the domain $G_1 = \{ g \in G \mid r^2 = x^2 + y^2 > 0\}$ in the same way as in Subsec~\ref{subsec:JacC1}:
\begin{align}
&J = - \frac{r^{10}}{\a} \pder{(P, Q, R)}{(t, \p, k)} = - \frac{64}{3k^4(1-k^2)\Delta^2 \a} J_1, \label{JJ1}\\
&J_1 = a_0 + a_1 \xi + a_2 \xi_2, \qquad \xi = \sin^2 u_2, \label{J1C2}\\ 
&p = \frac{t}{2k}, \qquad \tau = \p + \frac{t}{2k}, \nonumber \\
&u_1 = \am(p,k), \qquad u_2 = \am(\tau,k), \nonumber \\
&a_0 = \frac{1}{16} f_V \cdot a_{01}, \nonumber \\
&a_0+a_1+a_2 = (1-k^2)a_0 = \frac{1-k^2}{16} f_V \cdot a_{01}, \nonumber \\
&a_2 = f_z \cdot a_{21} \label{a2}, \\
&f_z = 
\frac{2}{k}
 \left(\sqrt{1-k^2 \sin^2 u_1} ((2 - k^2)F(u_1) - 2 E(u_1)) + k^2 \cos u_1 \sin u_1 \right) , \nonumber \\
&f_V =
\dfrac 43 \left\{ 3\sqrt{1-k^2 \sin^2 u_1} \,(2E(u_1) - (2 - k^2)F(u_1))^2   \right. \nonumber\\
&\qquad+ \cos u_1
[8E^3(u_1) - 4E(u_1)(4 + k^2)  -
 12 E^2(u_1)(2-k^2) F(u_1)  \nonumber\\
&\qquad
 + 6 E(u_1)(2-k^2)^2 F^2(u_1) + 
 F(u_1)(16 - 4k^2-3k^4  \nonumber \\
 &\qquad -(2-k^2)^3 F^2(u_1))] \sin u_1 
 -
2 \sqrt{1-k^2 \sin^2 u_1}  \,(-4k^2+3(2 E(u_1)
\nonumber\\
& \qquad
-(2-k^2)F(u_1))^2) \sin^2 u_1 
+ 12 k^2 \cos u_1 (2 E(u_1) \nonumber\\
&\qquad
 \left. - (2 - k^2) F(u_1)) \sin^3 u_1 
- 8 k^2 \sin^4 u_1 \sqrt{1-k^2 \sin^2 u_1}  \right\},  \nonumber \\
&k^4a_0 + k^2 a_1 + a_2 = (1-k^2)a_2 = (1-k^2) f_z \cdot a_{21},  \nonumber \\
&a_1 = - k^2 a_0 - a_2, \label{a102}
\end{align}
where the coefficients $a_{01}$ and $a_{21}$ are defined explicitly in~\eq{a01C2} and \eq{a21C2}.

\begin{remark}
Equality \eq{JJ1} is valid for all $(\lam, t) \in C_2 \times \R_+$ by the same argument as in remark at the end of Subsec.~$\ref{subsec:JacC1}$.
\end{remark}

\subsection{Conjugate time as $k \to 0$}
Compute the asymptotics of the Jacobian $J_1$ \eq{J1C2} as $k \to 0$:
\begin{align*}
&f_z = f_z^0 k^3 + o(k^3), \\
&f_z^0 = \frac{1}{16}(4p-\sin 4p), \\
&f_V = \frac{k^8}{512} f_V^0 + o(k^8), \\
&f_V^0 = (32 u_1^2 -1) \cos 2 u_1 + \cos 6 u_1 - 8 u_1 \sin 2 u_1, 
\end{align*}
\begin{align*}
&a_{01} = \frac{3}{2048} k^8 a_{01}^0 + o(k^8), \\
&a_{01}^0 = 64 u_1^3 \sin 2 u_1 + 48 u_1^2 \cos 2 u_1 - 44 u_1 \sin 2 u_1 - 4 u_1 \cos u_1 \sin 2 u_1 \\
&\qquad + 3 \cos 2 u_1 - 3 \cos 6 u_1, \\
&a_{21} = \frac{1}{4194304} k^{17} a_{21}^0 + o(k^{17}),\\
&a_{21}^0 = 45 u_1 + 608 u_1^3 - 512 u_1^5 + 16 u_1(28u_1^2 - 3) \cos 4 u_1 + 3 u_1 \cos 8 u_1 \\
&\qquad + 12 \sin 4 u_1 - 432 u_1^2 \sin 4 u_1 + 256 u_1^4 \sin 4 u_1 - 6 \sin 8 u_1,
\end{align*}
thus
\begin{align}
&a_0 = k^{16} a_0^0 + o(k^{16}), \nonumber\\
&a_0^0 = \frac{3}{2^{24}} f_V^0 \cdot a_{01}^0, \label{a00}\\
&a_2 = k^{20} a_2^0 + o(k^{20}), \label{a2k->0}\\
&a_2^0 = f_z^0 \cdot a_{21}^0, \label{a20}\\
&a_1 = - k^2 a_0 - a_2 = - k^{18} a_0^0 + o(k^{18}). \nonumber
\end{align}
Then
\begin{align}
J_1 &=
k^{16} a_0^0 + o(k^{16}) + \xi (-k^{18} a_0^0 + o(k^{18})) + \xi^2(k^{20} a_2^0 + o(k^{20})) \nonumber\\
& = k^{16} a_0^0 + o(k^{16}) = \frac 18 k^{16} f_V^0 \cdot a_{01}^0 + o(k^{16}). \label{J1k->0}
\end{align}
Thus, in order to bound the first positive root of $J_1$ as $k \to 0$, we have to bound the first positive roots of $f_V^0$ and $a_{01}^0$.

Let us denote by $F[a]$ the first positive root of a function $a(t)$:
$$
F[a] := \inf \{ t > 0 \mid a(t) = 0 \}.
$$

\begin{lemma}\label{lem:fV20}
We have $F[f_V^0] \in (\frac 58 \pi, \frac 34 \pi)$, moreover, the function $f_V^0$ has a simple root at the point $F[f_V^0]$. If $u_1 \in (0, F[f_V^0])$, then $f_V^0(u_1) < 0$.
\end{lemma}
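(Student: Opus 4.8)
\emph{Strategy.} I first recast $f_V^0$ in a usable form: substituting $\cos 6p = \cos 2p - 2\sin 2p\,\sin 4p$ into its definition shows that $f_V^0$ is a positive constant multiple of
$$
g(p)\;:=\;16 p^2 \cos 2p\;-\;(4p + \sin 4p)\sin 2p ,
$$
so $f_V^0$ and $g$ have the same sign and the same first positive root; moreover $4p+\sin 4p>0$ for $p>0$, and a short computation gives $g'(p)=6\cos 2p\,(4p-\sin 4p)-32p^2\sin 2p$. It then remains to show (i) $g(p)<0$ for $p\in(0,\tfrac58\pi]$, (ii) $g$ changes sign in $(\tfrac58\pi,\tfrac34\pi)$, and (iii) $g'>0$ at the resulting root.

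\emph{Proof of (i), in three pieces.} On $(0,\tfrac14\pi]$ the function $g$ vanishes to sixth order at $0$ (indeed $g(p)=-\tfrac{32}{9}p^6+o(p^6)$), so no termwise sign argument works; I instead use the elementary one-sided bounds $\cos x\le 1-\tfrac{x^2}{2}+\tfrac{x^4}{24}$ and $\sin x\ge x-\tfrac{x^3}{6}$, valid for all $x\ge 0$ (each follows by differentiating down to $x\ge\sin x$). With $x=2p$ this gives $16p^2\cos 2p\le 16p^2-32p^4+\tfrac{32}{3}p^6$; with $x=2p,4p$, and using $8p-\tfrac{32}{3}p^3>0$, $2p-\tfrac43 p^3>0$ on $(0,\tfrac14\pi]$ (as $\pi/4<\sqrt3/2$), this gives $(4p+\sin 4p)\sin 2p\ge(8p-\tfrac{32}{3}p^3)(2p-\tfrac43 p^3)=16p^2-32p^4+\tfrac{128}{9}p^6$. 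Subtracting, $g(p)\le-\tfrac{32}{9}p^6<0$. On $[\tfrac14\pi,\tfrac12\pi]$ we have $\cos 2p\le 0$ and $\sin 2p\ge 0$, so both summands of $g$ are $\le 0$ and cannot vanish for the same $p$, hence $g<0$. On $[\tfrac12\pi,\tfrac58\pi]$, writing $2p=\pi+\theta$ with $\theta\in[0,\tfrac14\pi]$, both summands of $g$ are again $\le 0$, and since $|\sin 2p|=\sin\theta\le\cos\theta=|\cos 2p|>0$ while $4p+\sin 4p\le\tfrac52\pi+1<4\pi^2\le 16p^2$, we obtain $(4p+\sin 4p)|\sin 2p|<16p^2|\cos 2p|$, i.e. $g<0$.

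\emph{Proof of (ii) and (iii).} Evaluating, $g(\tfrac34\pi)=3\pi>0$, so by the intermediate value theorem together with (i) the first positive root $F[f_V^0]=F[g]$ lies in $(\tfrac58\pi,\tfrac34\pi)$, and $g<0$ on $(0,F[g])$. On $[\tfrac58\pi,\tfrac34\pi]$ one has $\sin 4p\ge 0$ (so $0<4p-\sin 4p\le 3\pi$), $|\cos 2p|\le\tfrac{\sqrt2}{2}$, $|\sin 2p|\ge\tfrac{\sqrt2}{2}$, $p^2\ge(\tfrac58\pi)^2$, whence
$$
g'(p)\ \ge\ -6\cdot\tfrac{\sqrt2}{2}\cdot 3\pi+32\,(\tfrac58\pi)^2\cdot\tfrac{\sqrt2}{2}\ =\ \sqrt2\,\pi\Big(\tfrac{25\pi}{4}-9\Big)\ >\ 0 .
$$
Thus $g'(F[g])>0$, so the root is simple and $g$ (hence $f_V^0$) is negative on $(0,F[f_V^0])$, proving the lemma.

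\emph{Main obstacle.} Everything hinges on the piece $(0,\tfrac14\pi]$: since $g$ is $O(p^6)$ there, the proof must cancel the $p^2$ and $p^4$ contributions \emph{exactly} before a definite sign appears, so one must choose Taylor one-sided bounds of precisely the right order (quartic for $\cos$, cubic for $\sin$) — coarser or finer truncations fail. By contrast, the other two subintervals and step (iii) are routine sign and magnitude estimates.
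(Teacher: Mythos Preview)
Your proof is correct. One small slip: in the piece $[\tfrac12\pi,\tfrac58\pi]$ you write ``both summands of $g$ are again $\le 0$'', but in fact the second summand $-(4p+\sin 4p)\sin 2p$ is $\ge 0$ there (since $\sin 2p\le 0$). Your subsequent magnitude comparison $(4p+\sin 4p)|\sin 2p|<16p^2|\cos 2p|$ is exactly what is needed and is correct, so the conclusion $g<0$ stands; just delete or correct that phrase.

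Your route is genuinely different from the paper's. The paper uses its \emph{comparison function} technique: it sets $g(u_1)=2f_V^0(u_1)/\sin 2u_1$ and computes
\[
g'(u_1)=-\frac{8(\sin 4u_1-4u_1)^2}{\sin^2 2u_1}<0,
\]
so $f_V^0/\sin 2u_1$ is strictly monotone on each interval between zeros of $\sin 2u_1$, hence $f_V^0$ has at most one root per such interval; three point evaluations ($f_V^0(\pi/2)<0$, $f_V^0(5\pi/8)<0$, $f_V^0(3\pi/4)>0$) then locate the root, and simplicity follows from $(f_V^0)'=2\cot 2u_1\cdot f_V^0+\tfrac12\sin 2u_1\cdot g'$. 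By contrast, you rewrite $f_V^0=2(16p^2\cos 2p-(4p+\sin 4p)\sin 2p)$ and attack the sign directly via Taylor truncations and termwise estimates. The paper's approach is shorter and hinges on spotting the right divisor $\sin 2u_1$ (turning the problem into monotonicity plus three evaluations); your approach is more elementary but longer, and requires the delicate matching of Taylor orders near $0$ that you correctly identify as the main obstacle. Both yield the same simplicity conclusion: the paper via the formula for $(f_V^0)'$, you via $g'>0$ on $[\tfrac58\pi,\tfrac34\pi]$.
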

\begin{proof}
Introduce the function
$$
g(u_1) = \frac{2 f_V^0(u_1)}{\sin 2 u_1}, \qquad u_1 \neq \frac{\pi n}{2}.
$$
We have 
$$
g'(u_1) = - \frac{8 (\sin 4 u_1 - 4 u_1)^2}{\sin^2 2 u_1}, \qquad  u_1 \neq \frac{\pi n}{2},
$$
thus $g(u_1)$ decreases at intervals $u_1 \in \left(\frac{\pi n}{2}, \frac{\pi (n+1)}{2}\right)$. Since $\lim_{u_1 \to 0} g(u_1) = 0$, then $g(u_1) < 0$ and $f_V^0(u_1) < 0$ when $u_1 \in (0, \frac{\pi}{2})$. Evaluate $f_V^0$ at several points:
\begin{align*}
&f_V^0 \left(\frac{\pi}{2}\right) = - 8 \pi^2 < 0, \\
&f_V^0 \left(\frac{3 \pi}{4}\right) = 6 \pi > 0, \\
&f_V^0 \left(\frac{5 \pi}{8}\right) = \frac{4 + 10 \pi - 25 \pi^2}{2 \sqrt 2} < 0.
\end{align*}
By monotonicity of $g$, it follows that $F[f_V^0] \in (\frac 58 \pi, \frac 34 \pi)$. It is obvious from the above that  $f_V^0(u_1) < 0$ for $u_1 \in (0, F[f_V^0])$. 

Let us prove that the first root is simple:
$$
(f_V^0(u_1))' = (1/2 \sin 2 u_1 g(u_1))' = 2 \cot 2 u_1 f_V^0 (u_1) + 1/2 \sin 2 u_1 g'(u_1).
$$
If $u_1 = F[f_V^0]$, then $f_V^0(u_1) = 0$, $g'(u_1) < 0$, $\sin 2 u_1 < 0$, thus $(f_V^0)'(u_1) > 0$. 
\end{proof}

\begin{lemma}\label{lem:a010}
We have $F[a_{01}^0] \in (\frac 34 \pi, \pi)$. If $u_1 \in (0, F[a_{01}^0])$, then $a_{01}^0(u_1) < 0$.
\end{lemma}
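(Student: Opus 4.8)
The plan is to follow the comparison-function (``divide et impera'') scheme used in the proof of Lemma~\ref{lem:fV20}, supplemented by evaluation of $a_{01}^0$ at a few explicit points. First I would record the behaviour near $u_1 = 0$: expanding, one finds that the constant term, all odd-order terms, and the $u_1^2$-terms cancel, leaving
$$
a_{01}^0(u_1) = -60\, u_1^4 + o(u_1^4), \qquad u_1 \to 0^+,
$$
so $a_{01}^0 < 0$ on a right half-neighbourhood of $0$; in particular $F[a_{01}^0] > 0$ and $a_{01}^0$ is negative immediately to the right of $0$.

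The core step is to prove $a_{01}^0(u_1) < 0$ for all $u_1 \in \left(0, \tfrac{3\pi}{4}\right]$. The polynomial degree of $a_{01}^0$ in $u_1$ is $3$, with top coefficient $64 u_1^3 \sin 2 u_1$, so $\sin 2 u_1$ is the natural first divisor. On each of the intervals $\left(0, \tfrac{\pi}{2}\right)$ and $\left(\tfrac{\pi}{2}, \tfrac{3\pi}{4}\right)$, where $\sin 2 u_1 \neq 0$, I would form $a_{01}^0/\sin 2 u_1$, differentiate, multiply back by $\sin^2 2 u_1$, and iterate with suitable divisors $\widetilde g_1, \widetilde g_2, \dots$ until reaching a pure trigonometric function $g_N$ of manifest sign on these intervals (as for $f_V^0$ in Lemma~\ref{lem:fV20}, I expect $g_N$ to be, up to a factor of constant sign, a negative square). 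Unwinding the chain --- the sign of $g_N$ fixes the monotonicity of $g_{N-1}/\widetilde g_{N-1}$, which together with its value at one end of the interval fixes the sign of $g_{N-1}$, and so on up to $a_{01}^0/\sin 2 u_1$ --- then determines the sign of $a_{01}^0$ on $\left(0, \tfrac{\pi}{2}\right) \cup \left(\tfrac{\pi}{2}, \tfrac{3\pi}{4}\right)$. The seam $u_1 = \tfrac{\pi}{2}$ and the right endpoint I would handle by direct evaluation,
$$
a_{01}^0\!\left(\tfrac{\pi}{2}\right) = -12\pi^2 < 0, \qquad
a_{01}^0\!\left(\tfrac{3\pi}{4}\right) = -27\pi^3 + 33\pi - \tfrac{3\sqrt 2}{2}\,\pi < 0,
$$
which also supply the reference values needed on the second interval, yielding $a_{01}^0 < 0$ on all of $\left(0, \tfrac{3\pi}{4}\right]$.

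To finish: $a_{01}^0(\pi) = 48\pi^2 > 0$, so by the intermediate value theorem $a_{01}^0$ has a zero in $\left(\tfrac{3\pi}{4}, \pi\right)$, while by the core step it has none in $\left(0, \tfrac{3\pi}{4}\right]$; hence $F[a_{01}^0] \in \left(\tfrac{3\pi}{4}, \pi\right)$. Since $a_{01}^0$ is continuous, negative near $0$, and does not vanish on $\left(0, F[a_{01}^0]\right)$, it is negative throughout that interval, which is the second assertion.

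The hard part will be the core step: $a_{01}^0$ is a trigonometric quasipolynomial of polynomial degree $3$ carrying the frequencies $1, 2, 3$ and $6$ simultaneously, so several successive ``divide et impera'' differentiations are needed, each swelling the expression, and the divisors $\widetilde g_i$ must be chosen so that the terminal $g_N$ has readable sign. Equally delicate is the bookkeeping across the interior singularity $u_1 = \tfrac{\pi}{2}$: since $\sin 2 u_1$ (and possibly later divisors) vanishes there, the sign data obtained on $\left(0, \tfrac{\pi}{2}\right)$ cannot be continued through, and the limits of the quotients $g_i/\widetilde g_i$ as $u_1 \to 0^+$ and $u_1 \to \tfrac{\pi}{2}$ must be controlled where numerator and denominator vanish together.
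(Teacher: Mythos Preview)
Your plan coincides with the paper's proof in all essentials: both reduce to showing $a_{01}^0<0$ on $\left(0,\tfrac{3\pi}{4}\right]$ via \emph{divide et impera} with first divisor $\sin 2u_1$, and both finish with $a_{01}^0(\pi)=48\pi^2>0$. Your boundary evaluations and the expansion $a_{01}^0=-60u_1^4+o(u_1^4)$ are correct.

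The one place your plan would stall is the choice of subintervals. You propose to work on $\left(0,\tfrac{\pi}{2}\right)$ and $\left(\tfrac{\pi}{2},\tfrac{3\pi}{4}\right)$, tracking only the zeros of $\sin 2u_1$, and you expect the terminal function $g_N$ to be (up to a positive factor) a negative square, as happened for $f_V^0$. That expectation fails here. Carrying out the chain with the natural leading-term divisors gives
\[
\widetilde x_0=\sin 2u_1,\quad \widetilde x_1=\cos 4u_1,\quad \widetilde x_2=(15+14\cos 4u_1+\cos 8u_1)\sin^2 2u_1,
\]
and the terminal function is $x_3=-1024\,\cos 4u_1\,\sin^{10}2u_1$, which changes sign at $u_1=\tfrac{\pi}{8},\tfrac{3\pi}{8},\tfrac{5\pi}{8}$. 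Hence the unwinding cannot be run on an interval as long as $\left(0,\tfrac{\pi}{2}\right)$. The paper resolves this exactly as you would once you see the issue: it partitions $\left(0,\tfrac{3\pi}{4}\right]$ into six pieces of length $\tfrac{\pi}{8}$ and reruns the sign-tracking on each, feeding in the boundary value from the previous piece. So your strategy is right; only the partition must be refined from two subintervals to six.
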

\begin{proof}
We have $a_{01}^0(\pi) = 48 \pi^2 > 0$, thus it remains to prove that $a_{01}^0(u_1) < 0$ for $u_1 \in (0, \frac 34 \pi]$. 

We prove this bound by the method \, {\em ''divide et impera''}.  
Let us apply this method to the function
\begin{align*}
&x_0(u_1) = a_{01}^0(u_1).\\
\intertext{We get:}
&x_0 = 64 u_1^3 \tx_0 + o(u_1^3), \qquad u_1 \to \infty, \\
&\tx_0 = \sin 2 u_1, \\
&x_1 = \left(\frac{x_0}{\tx_0}\right)' \tx_0^2 = 
-96 u_1^2 \tx_1 + o(u_1^2), \qquad u_1 \to \infty, \\
&\tx_1 = \cos 4 u_1, \\
&x_2 = \left(\frac{x_1}{\tx_1}\right)' \tx_1^2 = 
32 u_1 \tx_2 + o(u_1), \qquad u_1 \to \infty, \\
&\tx_2 = (15+14 \cos 4 u_1 + \cos 8 u_1) \sin^2 2 u_1, \\
&x_3 = \left(\frac{x_2}{\tx_2}\right)' \tx_2^2 = 
-1024  \cos 4 u_1 \sin^{10} 2 u_1.
 \end{align*}
If $u_1 \in ( 0, \pi/8)$, then $x_3 < 0$, thus $\ds\frac{x_2}{\tx_2}$ decreases. We have $\ds \lim_{u_1 \to 0} \frac{x_2}{\tx_2} = 0$. Thus $\ds\frac{x_2}{\tx_2} < 0$ for $u_1 \in ( 0, \pi/8)$.  Since $\tx_2 > 0$ for $u_1 \in ( 0, \pi/8)$, then $x_2 < 0$, thus $\ds\frac{x_1}{\tx_1}$ decreases for $u_1 \in ( 0, \pi/8)$.
We have $\ds \lim_{u_1 \to 0} \frac{x_1}{\tx_1} = 0$. Then for $u_1 \in ( 0, \pi/8)$ we get  $\ds\frac{x_1}{\tx_1} < 0$, thus $x_1 < 0$, so $\ds\frac{x_0}{\tx_0}$ decreases. Finally,  $\ds\lim_{u_1 \to 0} \frac{x_0}{\tx_0} = 0$.  Thus for $u_1 \in ( 0, \pi/8)$ we have $\ds\frac{x_0}{\tx_0} < 0$ and $x_0 < 0$.

We prove similarly, going by steps of length $\pi/8$, that $a_{01}^0 < 0$ for $u_1 \in \left[ \frac{\pi}{8}, \frac{3\pi}{4}\right]$. 

Thus $F[a_{01}^0] \in \left( \frac{3\pi}{4}, \pi \right)$, and $a_{01}^0 < 0$ for $u_1 \in (0, F[a_{01}^0])$.
\end{proof}

\begin{lemma}\label{lem:a210}
If $u_1 > 0$, then $a_{21}^0 (u_1) < 0$. 
\end{lemma}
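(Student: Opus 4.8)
The plan is to prove $a_{21}^0(u_1)<0$ for all $u_1>0$ by the \emph{``divide et impera''} method, just as in the proof of Lemma~\ref{lem:a010}. First note the structure of $a_{21}^0$: it is an odd trigonometric quasipolynomial of degree~$5$ in $u_1$ whose top-degree term is the \emph{pure} monomial $-512\,u_1^5$, so $a_{21}^0(u_1)\to-\infty$ as $u_1\to+\infty$; a Taylor expansion at the origin shows that $a_{21}^0$ is odd and its coefficients of $u_1,u_1^3,u_1^5,u_1^7$ all vanish, so $a_{21}^0=c\,u_1^9+o(u_1^9)$ as $u_1\to0$ with $c<0$, whence $a_{21}^0(u_1)<0$ for small $u_1>0$. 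Set $x_0=a_{21}^0$ and $\tx_0=-512$ (the constant top coefficient); then $x_1=\bigl(\tfrac{x_0}{\tx_0}\bigr)'\tx_0^2=\tx_0\,(a_{21}^0)'$ is well defined on all of $\R$, and its top ($u_1^4$) coefficient $\tx_1$ is a positive multiple of $5/2-\cos4u_1$, hence strictly positive everywhere. Iterating $x_i=\bigl(\tfrac{x_{i-1}}{\tx_{i-1}}\bigr)'\tx_{i-1}^2$ for $i=2,\dots,5$ lowers the $u_1$-degree by one each time, so after five steps $x_5=x_5(u_1)$ is a pure trigonometric function, free of polynomial growth. Along the way I would record $\sgn\tx_i$ and check that each comparison function $\tx_i$ is nonvanishing on the range under study --- this is automatic on any interval of length $\pi/8$ (on which $\sin4u_1$ and $\cos4u_1$ keep a fixed sign), exactly as for $a_{01}^0$ in Lemma~\ref{lem:a010}, and I expect it to hold on all of $(0,+\infty)$ here thanks to the rigid structure forced by the single pure-polynomial term.

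Having reached $x_5$, I would determine its sign and back-substitute down the chain. Since every $x_i$ vanishes at $u_1=0$ and each $\tx_i$ is nonzero there, $\ds\lim_{u_1\to0}\frac{x_i}{\tx_i}=0$; combining this with $\sgn x_5$ (which fixes the monotonicity of $\ds\frac{x_4}{\tx_4}$), then with $\sgn\tx_4$, gives $\sgn x_4$, hence the monotonicity of $\ds\frac{x_3}{\tx_3}$, hence $\sgn x_3$, and so on down to $\sgn x_0=\sgn a_{21}^0$, which is the claim. If it turns out that $x_5$ is not sign-definite on all of $(0,+\infty)$, I would split the proof: for $u_1\ge2$ the crude envelope bound obtained by replacing each $\cos$ and $\sin$ by $\pm1$,
\[
a_{21}^0(u_1)\ \le\ -512\,u_1^5+256\,u_1^4+1056\,u_1^3+432\,u_1^2+96\,u_1+18\ <\ 0,
\]
already gives negativity; there then remains the compact interval $u_1\in(0,2]$, which I would cover by the six subintervals $[(j-1)\pi/8,\,j\pi/8]$, $j=1,\dots,6$, running the ``divide et impera'' reduction on each, using the vanishing limits at $u_1=0$ on the first subinterval and propagating the sign from one subinterval to the next, exactly as in Lemma~\ref{lem:a010}.

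The main obstacle is computational, not conceptual: producing the explicit expressions for $x_1,\dots,x_5$ is a heavy symbolic computation (each $x_i$ is a polynomial in $u_1$ with coefficients that are trigonometric polynomials in $4u_1$ and $8u_1$), verifying that the intermediate comparison functions $\tx_i$ do not vanish requires some care, and the final sign analysis of $x_5$ may itself need a short envelope or step-by-step argument. Once these are in hand, the argument is routine and parallels Lemmas~\ref{lem:fV20} and~\ref{lem:a010}.
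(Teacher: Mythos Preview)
Your plan is correct and matches the paper's proof: the paper applies exactly the ``divide et impera'' chain $x_0=a_{21}^0$, $x_1=x_0'$, $x_{i}=(x_{i-1}/\tx_{i-1})'\tx_{i-1}^2$ for $i=2,\dots,5$, and then back-substitutes from $x_5>0$ using $\lim_{u_1\to0}x_i/\tx_i=0$ at each step. The one point where the paper's execution is simpler than your contingencies is that every $\tx_i$ turns out to be \emph{globally} sign-definite (e.g.\ $\tx_1=2\cos4u_1-5<0$, $\tx_2=48-25\cos4u_1+4\cos8u_1>0$, and so on, with $\tx_4$ and $x_5$ vanishing only at the isolated points $u_1=\pi n/2$), so no envelope bound and no splitting into $\pi/8$-intervals is needed --- the whole argument runs on $(0,+\infty)$ at once.
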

\begin{proof}
We apply the method \, {\em ``divide et impera''}. We have:
\begin{align*}
&x_0 = a_{21}^0 = - 512 u_1^5 + o(u_1^5), \qquad u_1 \to \infty,\\
&x_1 = x_0'  = 512 u_1^4 \tx_1+ o(u_1^4), \qquad u_1 \to \infty,\\
&\tx_1 = 2 \cos 4 u_1 - 5 < 0, \\
&x_2 = \left(\frac{x_1}{\tx_1}\right)' \tx_1^2   = 1024 u_1^3 \tx_2+ o(u_1^3), \qquad u_1 \to \infty,\\
&\tx_2 = 48 - 25 \cos 4 u_1  + 4 \cos 8 u_1 > 0, \\
&x_3 = \left(\frac{x_2}{\tx_2}\right)' \tx_2^2   = 1536 u_1^2 \tx_3+ o(u_1^2), \qquad u_1 \to \infty,\\
&\tx_3 = 
(2 \cos 4 u_1 - 5)(-781 + 96 \cos 4 u_1 - 141 \cos 8 u_1 + 16 \cos 12 u_1)
 > 0, \\
&x_4 = \left(\frac{x_3}{\tx_3}\right)' \tx_3^2   = 6144 u_1 \tx_4+ o(u_1), \qquad u_1 \to \infty,\\
&\tx_4 = 
(2 \cos 4 u_1 - 5)(48 - 25 \cos 4 u_1 + 4 \cos 8 u_1) \\
&\qquad (-25136 - 27745 \cos 4 u_1 - 3880 \cos 8 u_1 + 53 \cos 12 u_1 + 8 \cos 16 u_1) \\
& \qquad \sin^2 2 u_1
 > 0,  \qquad u_1 \neq \frac{\pi n}{2},\\
&x_5 = \left(\frac{x_4}{\tx_4}\right)' \tx_4^2   = 
786432 (5-2\cos 4 u_1)^4 (48 - 25 \cos 4 u_1 + 4 \cos 8 u_1)^2  \\
& \qquad
(98009 - 18812 \cos 4 u_1 + 18073 \cos 8 u_1 - 2564 \cos 12 u_1 + 64 \cos 16 u_1)  \\
& \qquad\sin^{12} 2 u_1
> 0, 
\qquad u_1 \neq \frac{\pi n}{2}.
\end{align*}
The function $\ds\frac{x_4}{\tx_4}$ increases on $\R$, and since $\ds\lim_{u_1 \to 0} \frac{x_4}{\tx_4} = 0$, then $\ds\frac{x_4}{\tx_4} > 0$ for $u_1 > 0$. Let $u_1 > 0$, then $x_4 > 0$, thus $\ds\frac{x_3}{\tx_3}$ increases. Since   $\ds\lim_{u_1 \to 0} \frac{x_3}{\tx_3} = 0$, then $\ds \frac{x_3}{\tx_3} > 0$, $x_3 > 0$, and $\ds \frac{x_2}{\tx_2}$ increases for $u_1 > 0$. By virtue of $\ds\lim_{u_1 \to 0} \frac{x_2}{\tx_2} = 0$, we have $\ds\frac{x_2}{\tx_2} > 0$, $x_2 > 0$, and $\ds\frac{x_1}{\tx_1}$ increases as $u_1 > 0$. Since $\ds\lim_{u_1 \to 0} \frac{x_1}{\tx_1} = 0$, then $\ds\frac{x_1}{\tx_1} > 0$, $x_1 < 0$, and $x_0$ decreases as $u_1 > 0$. Finally, we have $x_0(0) = 0$, thus $x_0(u_1) = a_{21}^0(u_1) < 0$ for $u_1 > 0$.
\end{proof}

Now we are able to bound the Jacobian $J_1$ for $k \to 0$ as follows.
\begin{lemma}\label{lem:J1k->0}
For any $\eps>0$ there exists $\bk = \bk(\eps)\in (0,1)$ such that for any 
$k \in (0, \bk]$, any $u_1 \in (0, u_V^1(k)-\eps]$ and any 
$\xi \in [0, 1]$ we have $J_1(u_1, \xi, k) > 0$.
\end{lemma}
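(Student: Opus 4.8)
\emph{Proof proposal.} The plan is to reduce the sign of the quadratic trinomial $J_1$ on $[0,1]\times(0,u_V^1(k)-\eps]$ to the signs of the four scalar factors $f_V$, $a_{01}$, $f_z$, $a_{21}$, and then to control those as $k\to0$ by means of Lemmas~\ref{lem:fV20}--\ref{lem:a210}. First, using the relation $a_1=-k^2a_0-a_2$ from~\eq{a102}, one rewrites the trinomial as $J_1=a_0+a_1\xi+a_2\xi^2=a_0(1-k^2\xi)-a_2\,\xi(1-\xi)$. For $k\in(0,1)$ and $\xi\in[0,1]$ one has $1-k^2\xi\ge 1-k^2>0$ and $\xi(1-\xi)\ge 0$, hence $J_1\ge a_0(1-k^2)$, so it suffices to prove that $a_0>0$ and $a_2\le 0$ on the range in question. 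By the factorizations $a_0=\tfrac1{16}f_V\,a_{01}$ and $a_2=f_z\,a_{21}$ --- and since $f_V$ is negative just to the right of the origin (the $C_2$-analogue of~\eq{fV<0}) while $u_V^1(k)=\am(p_1^V(k),k)$ is built from the \emph{first} positive root of $f_V$, so that $f_V<0$ on all of $(0,u_V^1(k))$ --- it remains to prove $a_{01}<0$, $f_z>0$, $a_{21}<0$ on $(0,u_V^1(k)-\eps]$ for $k$ small.

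For this I would pass to the limit $k\to0$. By the expansions preceding Lemma~\ref{lem:fV20}, $f_z=k^3f_z^0+o(k^3)$ with $f_z^0=\tfrac1{16}(4p-\sin4p)>0$ for $p>0$, while $a_{01}=\tfrac3{2048}k^8a_{01}^0+o(k^8)$ and $a_{21}=\tfrac1{4194304}k^{17}a_{21}^0+o(k^{17})$; Lemmas~\ref{lem:a010} and~\ref{lem:a210} give $a_{01}^0<0$ on $(0,F[a_{01}^0])$ and $a_{21}^0<0$ on all of $(0,+\infty)$. Since $p_1^V(k)\to F[f_V^0]$ and $\am(\cdot,k)\to\mathrm{id}$ as $k\to0$, we get $u_V^1(k)\to F[f_V^0]$, and $F[f_V^0]<\tfrac34\pi<F[a_{01}^0]$ by Lemmas~\ref{lem:fV20} and~\ref{lem:a010}; hence for $k$ below some $\bk_0$ the interval $(0,u_V^1(k)-\eps]$ is contained in a fixed interval $(0,M]$ with $M<F[a_{01}^0]$. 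On each compact piece $[\delta,M]$, $\delta>0$ fixed, the rescaled functions $a_{01}/k^8$, $a_{21}/k^{17}$, $f_z/k^3$ converge uniformly to limits that are bounded away from zero with the required signs, so the three inequalities hold there once $k$ is small enough.

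The only genuine difficulty is the strip $u_1\in(0,\delta]$, where $a_{01}^0$ and $a_{21}^0$ vanish as $u_1\to 0$, so the $k\to0$ asymptotics are no longer uniform and cannot dominate the $o(\cdot)$ remainders near $u_1=0$. I would treat this strip via the joint Taylor expansion of $a_{01}$, $a_{21}$, $f_z$ at $(p,k)=(0,0)$: tracking leading orders along the ``divide et impera'' chains of Lemmas~\ref{lem:a010},~\ref{lem:a210} yields $a_{01}^0(p)=-c_1p^{N_1}+o(p^{N_1})$ and $a_{21}^0(p)=-c_2p^{N_2}+o(p^{N_2})$ with $c_1,c_2>0$, and one checks that the (even-in-$k$) corrections to $a_{01}$, $a_{21}$, $f_z$ vanish in $p$ to at least the same orders, so that near the origin $a_{01}=-\tfrac{3}{2048}c_1\,p^{N_1}k^8(1+O(p)+O(k^2))$, and similarly for $a_{21}$ and $f_z$; this pins their signs on a fixed neighbourhood $(p,k)\in(0,\delta'']\times(0,\bk'']$ of the origin, which after translating $p$ into $u_1=\am(p,k)\asymp p$ covers the strip once $\delta\le\delta''$. (Alternatively, on this strip one may invoke that the full Jacobian $J$ --- hence $-J_1$ up to positive factors, by~\eq{JJ1} --- is sign-definite for short geodesics, after normalizing $\a$ via the dilation~\eq{YZ}, by the leading-order ``ball-box'' expansion of $\Exp$.) Setting $\bk(\eps)$ equal to the minimum of the thresholds produced by the compact-piece argument and by the strip argument then completes the proof. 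I expect this last step --- controlling $a_{01}$ and $a_{21}$ (equivalently $a_0$ and $a_2$) uniformly down to $u_1=0$, where the leading $k$-asymptotics degenerate --- to be the main obstacle; everything before it is either a direct sign evaluation or a routine uniform-convergence-on-compacts argument.
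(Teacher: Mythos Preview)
Your approach is correct and takes a route parallel to the paper's but with a different decomposition. You rewrite $J_1=a_0(1-k^2\xi)-a_2\,\xi(1-\xi)$ and reduce the problem to the two separate sign conditions $a_0>0$, $a_2\le 0$, hence to the signs of the four factors $f_V,\,a_{01},\,f_z,\,a_{21}$; this uses all three Lemmas~\ref{lem:fV20}--\ref{lem:a210}. The paper instead works with $J_1$ as a whole: from~\eq{J1k->0} the quotient $\tJ:=J_1/k^{16}$ extends continuously to $k=0$ with value $a_0^0=\mathrm{const}\cdot f_V^0\cdot a_{01}^0>0$ on $(0,u_V^1(0))$ (only Lemmas~\ref{lem:fV20} and~\ref{lem:a010} enter here), and a single open-neighbourhood argument around the segment $\{k=0,\ u_1\in(0,u_V^1(0))\}$ handles the interior. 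Both approaches meet the same obstruction at the corner $(u_1,k)=(0,0)$, and both resolve it by a joint two-variable expansion; the paper does this once for $a_0,a_1,a_2$ directly, obtaining $J_1=\tfrac{4}{70875}k^{16}u_1^{16}+o\big((u_1^2+k^2)^{16}\big)$, whereas your plan calls for three separate expansions of the factors. What your route buys is that the separate inequality $a_2<0$ for small $k$ is precisely what the paper needs next (in Lemma~\ref{lem:J1>0x(0,1)}, to get $J_1>0$ at the endpoint $u_1=u_V^1(k)$), so you would arrive there with that piece already in hand; the paper's route is more economical for the present lemma but postpones and re-derives that ingredient later.
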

\begin{proof}
By virtue of \eq{J1k->0},
\begin{align*}
&J_1 = k^{16} a_0^0 + o(k^{16}), \qquad k \to 0, \\
&a_0^0 = \frac{3}{2^{24}} f_V^0 \cdot a_{01}^0,
\end{align*}
moreover,
\be{J1u1xk16}
|J_1(u_1, \xi, k) - k^{16}a_0^0(u_1)| \leq f(k) = o(k^{16}), \qquad k \to 0,
\ee
where the function $f(k)$ does not depend on $\xi$ since $\xi \in [0, 1]$. 

By Lemmas \ref{lem:fV20}, \ref{lem:a010}, if $u\in(0, u_V^1(0))$, then $a_0^0(u_1) > 0$.

Define a function:
$$
\tJ(u_1, \xi, k) = 
\begin{cases}
J_1(u_1, \xi, k)/k^{16}, & k>  0, \\
a_0^0(u_1), & k = 0.
\end{cases}
$$
We have $\lim_{k \to 0}J_1/k^{16} = a_0^0$, thus the function $\tJ$ is continuous on the set $\R_{u_1} \times [0, 1]_{\xi} \times [0, 1)_k$.  Then there exists an open set $O_1 \subset \R_{u_1} \times [0, 1)_k$ such that:
\begin{align*}
&O_1 \supset I := \{(u_1, k) \in \R \times [0, 1) \mid u_1 \in (0, u_V^1(0), \ k = 0\}, \\
&\restr{\tJ}{O_1} > 0.
\end{align*}
By virtue of inequality~\eq{J1u1xk16}, the neighborhood $O_1$ does not depend on $\xi$. Then the set $O_2 = O_1 \setminus I \subset \R_{u_1} \times (0, 1)_k$ is open and $\restr{J_1}{O_2} > 0$.

Now we study the sign of $J_1$ in the neighborhood of $(u_1, k) = (0, 0)$. To this end, compute the asymptotics as $u_1^2 + k^2 \to 0$:
\begin{align*}
&a_0 = \frac{4}{70875} k^{16} u_1^{16} + o(u_1^2+k^2)^{16}, \\ 
&a_2 = -\frac{4}{4465125} k^{20} u_1^{20} + o(u_1^2+k^2)^{20}, \\
&a_1 = - k^2 a_0 - a_2 
= - \frac{4}{70875} k^{18} u_1^{16} + o(u_1^2+k^2)^{17}, \\
\intertext{whence}
&J_1 =   \frac{4}{70875} k^{16} u_1^{16} + o(u_1^2+k^2)^{16} > 0 \text{ as } (u_1, k) \to (0, 0).
\end{align*}
Thus there exists an open set $O_3 \subset (0, + \infty)_{u_1} \times (0,1)_k$ such that the closure of $O_3$ contains a neighborhood of $(u_1, k) = (0, 0)$ in $[0, + \infty)_{u_1} \times [0, 1)_k$ and $\restr{J_1}{O_3} > 0$. The set $O_3$ does not depend on $\xi$ by the same argument as $O_1$, $O_2$. 

Take any $\eps > 0$. The set $\cl(O_2) \cup \cl(O_3)$ contains a neighborhood of the segment $[0, u_V^1(0) - \eps]_{u_1} \times \{k = 0\}$. Thus there exists $\bk = \bk(\eps) \in (0, 1)$ such that the domain $\{(u_1, k ) \in \R \times (0, 1) \mid k \in (0, \bk], \ u_1 \in (0, u_V^1(k) - \eps]\}$ is contained in $O_2 \cup O_3$, thus the Jacobian $J_1$ is positive on it:
$$
\forall k \in (0, \bk] \quad \forall u_1 \in (0, u_V^1(k) - \eps] \quad \forall \xi \in [0, 1] \qquad J_1(u_1, \xi, k) > 0.
$$
\end{proof}

The bound of the Jacobian $J_1$ of Lemma~\ref{lem:J1k->0} can be reformulated in terms of conjugate points along the corresponding geodesics.

\begin{cor}\label{cor:indk->0}
Take any $\eps > 0$ and choose $\bk = \bk(\eps) \in (0, 1)$ according to Lemma~$\ref{lem:J1k->0}$. Take any $\lam = (\p, k, \a, \b) \in C_2$ with $k \in (0, \bk]$. Define functions:
\begin{align*}
&u_1(k, \eps) = u_V^1(k) - \eps, \\
&p(k, \eps) = F(u_1(k,\eps), k), \\
&t(k, \eps) = 2 k p(k, \eps).
\end{align*}
Then the geodesic
$$
\gamma_{\lam}^{\eps} = \{ \Exp(\lam,s) \mid s \in (0, t(k, \eps))\}
$$
does not contain conjugate points.
\end{cor}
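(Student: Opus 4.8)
The plan is to translate the analytic bound of Lemma~\ref{lem:J1k->0} into a statement about conjugate points. As recalled at the beginning of Subsec.~\ref{subsec:JacC2}, and following from the results of Sec.~\ref{sec:conj_hom} applied to the geodesics with $\lam\in C_2$ (which are strictly normal --- their control $(h_1,h_2)$ is nonconstant, hence they are not abnormal, all abnormal extremals lying in $C_4\cup C_5\cup C_7$ --- and for which hypotheses \Ho--\Hf hold), the conjugate times along the geodesic $\Exp(\lam,\cdot)$ are precisely the roots $s>0$ of the Jacobian $J=\pder{(x,y,z,v,w)}{(s,\p,k,\a,\b)}$. So it suffices to prove $J(\lam,s)\neq 0$ for all $s\in(0,t(k,\eps))$.

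For this I would use the factorization~\eqref{JJ1}, $J=-\dfrac{64}{3k^4(1-k^2)\Delta^2\a}\,J_1$ (valid on all of $C_2\times\R_+$ by the remark following it). On $C_2$ one has $\a>0$, $k\in(0,1)$ and $\Delta=1-k^2\sn^2 p\,\sn^2\tau\geq 1-k^2>0$, so the scalar prefactor never vanishes; hence $J(\lam,s)=0$ is equivalent to $J_1=0$, and in particular $J_1>0$ forces $J(\lam,s)\neq 0$. It then remains to match parameter ranges. For $s\in(0,t(k,\eps))$ one has $p=\dfrac{s}{2k}\in(0,p(k,\eps))$; since $\dfrac{d}{dp}\am(p,k)=\dn(p,k)>0$, the map $p\mapsto u_1=\am(p,k)$ is strictly increasing, so $u_1\in(0,\am(p(k,\eps),k))=(0,u_V^1(k)-\eps)$, the last equality holding because $p(k,\eps)=F(u_1(k,\eps),k)$ and $u_1(k,\eps)=u_V^1(k)-\eps$. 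Moreover $\xi=\sin^2 u_2\in[0,1]$ automatically. Since $k\in(0,\bk(\eps)]$, Lemma~\ref{lem:J1k->0} gives $J_1(u_1,\xi,k)>0$, whence $J(\lam,s)\neq 0$ and $g_s$ is not conjugate to $\Id$. Letting $s$ range over $(0,t(k,\eps))$ shows that $\gamma_\lam^\eps$ contains no conjugate point.

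I do not expect a real obstacle here: the corollary is a change-of-variables reformulation of Lemma~\ref{lem:J1k->0}. The only two points that deserve a little attention are (i) invoking the characterization ``conjugate time $\iff$ zero of $J$'' with all its hypotheses, notably strict normality of the $C_2$ geodesics and the applicability of the general theory of Sec.~\ref{sec:conj_hom}, and (ii) the bookkeeping with the elliptic coordinates, in particular checking that the substitution $u_1=\am(s/(2k),k)$ carries the interval $s\in(0,t(k,\eps))$ exactly onto the range $(0,u_V^1(k)-\eps)$ covered by Lemma~\ref{lem:J1k->0}, while $\xi$ stays automatically confined to $[0,1]$.
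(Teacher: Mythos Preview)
Your argument is correct and matches the paper's intended reasoning: the paper states this corollary without proof, presenting it as the direct reformulation of Lemma~\ref{lem:J1k->0} in terms of conjugate points via the factorization~\eqref{JJ1}. Your write-up spells out exactly the two steps the paper leaves implicit---the nonvanishing of the prefactor in~\eqref{JJ1} and the bijection $s\leftrightarrow u_1$ given by $u_1=\am(s/(2k),k)$---so there is nothing to add.
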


\subsection{Conjugate time for arbitrary $k \in (0, 1)$}
Now we can prove a bound for the Jacobian $J_1$ via homotopy from an arbitrary $k \in (0,1)$ to a small $k$ close to 0.

\begin{lemma}\label{lem:J1>0x(0,1)}
For any $k \in(0,1)$, any $u_1 \in(0, u_V^1(k))$  and any $\xi \in (0,1)$ we have $J_1(u_1, \xi, k) > 0$.
\end{lemma}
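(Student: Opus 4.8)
The plan is to derive the bound from homotopy invariance of the number of conjugate points (Theorem~\ref{th:conj_hom}), the base case being the absence of conjugate points for $k$ close to $0$ provided by Corollary~\ref{cor:indk->0}. First, note that by \eqref{JJ1} the relation $J_1(u_1,\xi,k)=0$ at a triple with $0<u_1<u_V^1(k)$, $0<\xi<1$, $0<k<1$ holds exactly when the point $\Exp(\lam,t)$ is conjugate, for the geodesic $\lam=(\p,k,\a,\b)\in C_2$ with $\am(p,k)=u_1$, $\sin^2\am(\tau,k)=\xi$; since $p<p_1^V(k)$ we have $t<\tmax(\lam)$, and every such triple is realised in this way for a suitable $\p$. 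Hence the lemma is equivalent to the claim that no geodesic $\Exp(\lam,\cdot)$, $\lam\in C_2$, has a conjugate point in $(0,\tmax(\lam))$, together with the sign normalisation $J_1>0$: the region $\mathcal R=\{(u_1,\xi,k):0<k<1,\ 0<\xi<1,\ 0<u_1<u_V^1(k)\}$ is connected ($u_V^1$ being continuous and positive), $J_1$ is continuous and, by the claim, nowhere zero on $\mathcal R$, and $J_1>0$ on the part of $\mathcal R$ with $k$ near $0$ by Lemma~\ref{lem:J1k->0}; so $J_1>0$ on all of $\mathcal R$.

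To establish the claim, fix $\lam=(\p,k,\a,\b)\in C_2$ and $\eps>0$; it is enough to exclude conjugate points on $(0,t(k,\eps)]$, where $t(k,\eps)$ corresponds to $p=F(u_V^1(k)-\eps,k)$, and let $\eps\to0$. If $k\le\bk(\eps)$ this is Corollary~\ref{cor:indk->0}. Otherwise, connect $k$ to some $\bk\le\bk(\eps)$ by a monotone path $k^s$, $s\in[0,1]$, $k^0=k$, $k^1=\bk$; set $\lam^s=(\p^s,k^s,\a,\b)$ and choose the endpoint time $t_1^s$ together with $\p^s$ continuously so that the pendulum phase at $t_1^s$ is an integer multiple of $K(k^s)$, i.e.\ $\tau_1^s=\p^s+F(u_1^s,k^s)=m\,K(k^s)$ with $u_1^s\in(0,u_V^1(k^s))$. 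At such an endpoint $\xi\in\{0,1\}$, so by \eqref{J1C2}, \eqref{a102} the value of $J_1$ at $t_1^s$ is either $a_0$ or $(1-k^2)a_0$; since $F(u_1^s,k^s)<p_1^V(k^s)$ we have $f_V\neq0$ there (by definition of $p_1^V$ as the first positive root of $f_V$), and $a_{01}\neq0$ there by a comparison-function argument exactly as in item~1) of the proof of Lemma~\ref{lem:J2<0}: the quotient $p\mapsto a_{01}/f_z$ is monotone on its intervals of definition, which — together with its sign for small $p>0$ — fixes the sign of $a_{01}$ on $(0,p_1^V(k))$. Hence $a_0=\tfrac1{16}f_V a_{01}\neq0$ at $t_1^s$ for every $s$, so $t_1^s$ is never a conjugate time and Theorem~\ref{th:conj_hom} carries the absence of conjugate points from $\lam^1$ (base case) back to $\lam^0$.

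Along the target geodesic the $\xi\in\{0,1\}$ endpoint only reaches instants with $p\in K(k)\Z-\p$, so this argument directly clears $(0,p]$ only up to the largest such value below $p_1^V(k)$, leaving an interval of length $<K(k)$ below the Maxwell instant. That residual band is cleared by iterating: chaining finitely many homotopies, and also deforming $\p$ with $k$ held fixed, now using previously cleared interior points as anchors for the endpoint non-conjugacy (and sweeping $\xi\in(0,1)$ by varying $\p$). The main obstacle is exactly this bookkeeping — arranging the integers $m$, the two branches of $\p$ modulo $2K(k)$ realising a given $\xi$, and the order of the chained deformations so that every triple of $\mathcal R$ is reached while every endpoint met along every deformation lies at a point where non-conjugacy is already known. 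The remaining ingredient, genuinely used above, is the comparison-function proof that $a_{01}$ (hence $a_0$) keeps a constant nonzero sign on $(0,p_1^V(k))$ for every $k\in(0,1)$; this is done exactly as for the analogous functions in Section~\ref{sec:C1}.
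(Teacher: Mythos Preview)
Your overall strategy—homotopy to small $k$ via Theorem~\ref{th:conj_hom} with Corollary~\ref{cor:indk->0} as the base case—matches the paper's. The difference is in the anchor you pick for endpoint non-conjugacy, and that choice creates two genuine gaps.

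\textbf{Gap 1: the sign of $a_0$ for $C_2$.} You anchor at instants with $\xi\in\{0,1\}$, where $J_1$ reduces to $a_0$ or $(1-k^2)a_0$, and then assert $a_0=\tfrac1{16}f_V a_{01}\neq 0$ on $(0,p_V^1(k))$ ``exactly as in item~1) of the proof of Lemma~\ref{lem:J2<0}''. But that lemma treats the $C_1$ coefficient; the $C_2$ function $a_{01}$ is a different expression (the one in~\eqref{a01C2}), and the sum-of-squares identity~\eqref{x2=k2} that drives the $C_1$ comparison argument has no established analogue for it. The paper only controls the $C_2$ coefficient $a_{01}$ in the $k\to 0$ limit (Lemma~\ref{lem:a010}); for general $k$ you are importing a computation that has not been done.

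\textbf{Gap 2: the residual band.} Even granting $a_0\neq 0$, your homotopy certifies non-conjugacy only up to the largest $\xi\in\{0,1\}$ instant below $\tmax(\lam)$ along the \emph{given} geodesic. The ``iteration'' you propose is circular: to push the endpoint $t_1$ into the residual you must already know $J_1\neq 0$ there, which is the claim; and deforming $\p$ with $k$ fixed proves something about a \emph{different} geodesic, not the target one. The ``bookkeeping'' you defer is the whole remaining argument, and it is not clear it can be closed without an independent endpoint estimate in the residual.

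The paper sidesteps both issues by anchoring at the other boundary: it keeps $\xi=\xi_1\in(0,1)$ fixed throughout the homotopy and places the endpoint at $u_1=u_V^1(\mu)-\eps$. There $a_0$ is small (since $f_V\to 0$) and $J_1=a_0(1-k^2\xi)-a_2\xi(1-\xi)$ is dominated by $-a_2\,\xi_1(1-\xi_1)>0$, the negativity of $a_2$ near $u_V^1(k)$ being controlled uniformly in $k$ via the normalized function $\tilde a_2$ and Lemma~\ref{lem:a210}. A single homotopy then reaches $t_1$ arbitrarily close to $\tmax(\lam)$ for every $\lam$ with $\xi_1\in(0,1)$: no iteration, and only $a_2$ (not $a_{01}$) needs to be understood for general~$k$.
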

\begin{proof}
Take any $\lam_1 = (\p_1, k_1, \a_1, \b_1) \in C_2$ such that $\xi_1 \in (0, 1)$, where
\begin{align*}
&\xi_1 = \sn^2(\tau_1, k_1), \\
&\tau_1 = \p_1 + \frac{t_1}{2 k_1}, \\
&t_1 = 2 k_1 F(u_V^1(k_1), k_1).
\end{align*}
For any $k \in (0,1)$, $u_1 = u_V^1(k)$, $\xi = \xi_1$ we have
\be{J1a2xi}
J_1 = - a_2 \xi (1-\xi).
\ee
By virtue of \eq{a2}, we have $a_2(u_1, k) < 0$, thus $J_1(u_V^1(k), \xi, k) > 0$.

Recall equalities \eq{a2k->0}, \eq{a20} and define a function
$$
\ta_2(u_1, k) = 
\begin{cases}
\ds\frac{a_2(u_1, k)}{k^{20}} = f_z^0(u_1) a_{21}^0(u_1) + o(1), & k \in (0, 1), \\
 f_z^0(u_1) a_{21}^0(u_1), & k =0.
\end{cases}
$$
This function is continuous on the set $\R_{u_1} \times [0, 1)_k$. Moreover,
$$
\ta_2(u_V^1(k), k) < 0 \text{ for all } k \in [0, 1).
$$
Thus there exists $\eps> 0$ (fix it) such that for all $k \in (0, 1)$ and all $u_1 \in [u_V^1(k) - \eps, u_V^1(k)]$ we have $a_2(u_1, k) < 0$, whence $J_1(u_1, \xi_1, k) > 0$ in view of~\eq{J1a2xi}.

For the fixed $\eps> 0$, choose $\bk = \bk(\eps) \in (0, 1)$ by Lemma~\ref{lem:J1k->0}. Now we construct a homotopy of geodesics from $k_1$ to $\bk$.

A parameter of the homotopy is $\mu \in[\bk, k_1]$, we assume that $\bk < k_1$, otherwise there is nothing to prove. Define a continuous family of initial points of extremals:
\begin{align*}
&\lam^{\mu} = (\p^\mu, k = \mu, \a_1, \b_1) \in C_2, \qquad \mu \in[\bk, k_1], \\
&\xi_1 = \sn^2 \left(\p^\mu + \frac{t^\mu}{2 \mu}, \mu\right), \\
&t^\mu = 2 \mu F(u_V^1(\mu), \mu).
\end{align*}
Further, consider a continuous one-parameter family of geodesics:
\begin{align*}
&\gamma^\mu = \{\Exp(\lam^\mu, s) \mid s \in(0, t(\mu, \eps)]\}, \\
&t(\mu, \eps) = 2 \mu F(u_V^1(\mu) - \eps, \mu).
\end{align*}
By Cor.~\ref{cor:indk->0},  the geodesic $ \gamma^{\bk} $ does not contain conjugate points. 

Further, for any $\mu \in [\bk, k_1]$ and $u_1 = u_V^1(\mu)- \eps$ we have $J_1(u_1, \xi_1, \mu) > 0$ as was proved above. That is, the terminal time $s = t(\mu, \eps)$ is not conjugate for the geodesic $\gamma^\mu$.

By Th.~\ref{th:conj_hom}, 
 the curve $\gamma^{k_1}$ is free of conjugate points. 

Since $J_1(u_1, \xi_1, k) > 0$ for $u_1\in (0, u_V^1(k) - \eps]$ and small $k$, then $J_1(u_1, \xi_1, k_1) > 0 $ for $u_1 \in (0, u_V^1(k_1))$. 
\end{proof}

In the following two lemmas we clarify the sign of the Jacobian $J_1$ for the remaining special values of the parameters $\xi$, $u_1$.

\begin{lemma}
\label{lem:J1xi01}
Let $\xi \in \{0, 1\}$, $k \in (0,1)$, and $u_1 \in (0, u_V^1(k))$. Then $J_1(u_1, \xi, k) > 0$.
\end{lemma}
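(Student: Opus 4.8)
The plan is to reduce the assertion to a single sign inequality for the coefficient $a_0$. From the relations recorded in Subsec.~\ref{subsec:JacC2} we have $J_1(u_1,0,k)=a_0$ and $J_1(u_1,1,k)=a_0+a_1+a_2=(1-k^2)a_0$, so for $k\in(0,1)$ the two endpoint values of the quadratic $\xi\mapsto J_1(u_1,\xi,k)$ are positive multiples of $a_0$. Hence it suffices to prove
\[
a_0(u_1,k)>0\qquad\text{for }k\in(0,1),\ u_1\in\bigl(0,u_V^1(k)\bigr).
\]
Note that merely letting $\xi\to0^+$ or $\xi\to1^-$ in Lemma~\ref{lem:J1>0x(0,1)} only yields $a_0\ge 0$ (indeed $a_0\to 0$ as $u_1\to u_V^1(k)^-$, since $f_V$ vanishes there), so a genuine sign analysis on the open interval is unavoidable.

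Since $a_0=\tfrac1{16}f_V\cdot a_{01}$, I would show that $f_V$ and $a_{01}$ are both strictly negative on $\bigl(0,u_V^1(k)\bigr)$. For $f_V$ this is immediate: by construction $u_V^1(k)=\am(p_1^V(k),k)$, where $p_1^V(k)$ is the \emph{first} positive root of $f_V$; hence $f_V$ is sign-definite there, and the leading term of its expansion at the origin is negative (the analogue of \eqref{fV<0}), so $f_V<0$ on the whole interval. One also records the elementary fact that $f_z>0$ for small positive $u_1$, needed for the second factor.

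The heart of the matter, and the step I expect to be the main obstacle, is $a_{01}(u_1,k)<0$ on $\bigl(0,u_V^1(k)\bigr)$ for every $k\in(0,1)$. I would attack it by the comparison-function technique of Sec.~\ref{sec:C1}: differentiate the ratio $a_{01}/f_z$ and verify an identity of the shape
\[
\pder{}{u_1}\!\left(\frac{a_{01}}{f_z}\right)f_z^2 \;=\; c\,\bigl(k^2(\,\cdots\,)^2+(1-k^2)(\,\cdots\,)^2\bigr),
\]
with $c$ an explicit nonzero constant and the parentheses explicit Jacobi-function expressions, just as in \eqref{x2=k2}. This makes $a_{01}/f_z$ strictly monotone on each interval where $f_z\neq0$; combined with the sign of $f_z$ near the origin and the sign of the leading Taylor coefficient of $a_{01}$ at $u_1=0$ (which should be negative, consistently with $a_0=\tfrac{4}{70875}k^{16}u_1^{16}+\dots>0$ near $(u_1,k)=(0,0)$), one propagates the sign of $a_{01}$ across $\bigl(0,u_V^1(k)\bigr)$, checking en route that $u_V^1(k)$ does not lie beyond the first zero of $f_z$ (or, if it does, handling that crossing as in the proof of Lemma~\ref{lem:J2<0}). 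Should the comparison identity prove too cumbersome, an alternative is to mimic the proof of Lemma~\ref{lem:J1k->0}: write $a_{01}=\tfrac{3}{2048}k^8a_{01}^0+o(k^8)$, use Lemmas~\ref{lem:fV20} and~\ref{lem:a010} to get $u_V^1(0)=F[f_V^0]\in(\tfrac58\pi,\tfrac34\pi)<F[a_{01}^0]$ and hence $a_{01}^0<0$ on $\bigl(0,u_V^1(0)\bigr]$, extend the function $a_{01}/k^8$ continuously to $k=0$, and propagate negativity from a neighbourhood of $\{k=0\}$ to all of $(0,1)$ using that continuity together with the asymptotics near $(u_1,k)=(0,0)$.

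Finally, combining the two sign facts gives $a_0=\tfrac1{16}f_V\,a_{01}=\tfrac1{16}(\text{negative})(\text{negative})>0$ on $\bigl(0,u_V^1(k)\bigr)$, whence $J_1(u_1,0,k)=a_0>0$ and $J_1(u_1,1,k)=(1-k^2)a_0>0$, as claimed. The conceptual structure is short; the real cost is the symbolic verification that the derivative of $a_{01}/f_z$ is a manifest sum of squares and the determination of the relevant leading coefficients.
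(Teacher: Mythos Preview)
Your algebraic reduction is correct: $J_1(u_1,0,k)=a_0$ and $J_1(u_1,1,k)=(1-k^2)a_0$, so the lemma is equivalent to $a_0>0$ on $(0,u_V^1(k))$. However, your subsequent claim that ``a genuine sign analysis on the open interval is unavoidable'' is precisely where the paper outflanks you. The paper does \emph{not} let $\xi\to 0$ at fixed $u_1$; instead it exploits that along a geodesic $\xi=\sn^2(\psi+t/(2k))$ depends on $t$. Given a target $(u_1^*,0,k)$, choose $\psi$ so that $\xi(s^*)=0$ at the time $s^*$ corresponding to $u_1^*$, then pick $\bar t$ slightly beyond $s^*$ with $\xi(\bar t)\in(0,1)$ and $u_1(\bar t)<u_V^1(k)$. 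The homotopy argument underlying Lemma~\ref{lem:J1>0x(0,1)} (via Theorem~\ref{th:conj_hom}) yields that the \emph{entire arc} $(0,\bar t\,]$ is free of conjugate points, hence $J_1\neq 0$ at $s^*$; combined with the weak inequality $a_0\ge 0$ you already obtain from the limit, this gives $a_0(u_1^*,k)>0$. No new symbolic computation is required. The paper's written proof phrases this with $\xi_1=0$ at the terminal time $t_1$, but the mechanism is the same perturbation in $t$.

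Regarding your two proposed routes for $a_{01}<0$: approach (a) is plausible, as the parallel sum-of-squares identity \eqref{x2=k2} does exist in $C_1$, but you have not verified it here and the $C_2$ expressions in \eqref{a01C2} are not obviously amenable. Approach (b) has an actual gap: continuity of $a_{01}/k^8$ to $k=0$ together with negativity near $k=0$ and near $(u_1,k)=(0,0)$ does not propagate to all $k\in(0,1)$. You would be assuming exactly what you need to prove, namely that $a_{01}$ does not vanish anywhere in the interior. Propagation arguments of this type require an additional barrier (e.g.\ nonvanishing of $a_{01}$ along some curve reaching every $k$), which you have not supplied.

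In short, your reduction is sound, but the dynamical shortcut you dismiss is exactly the paper's proof, and it sidesteps the laborious sign analysis you outline. If you want a self-contained algebraic proof, only route (a) is viable, and it still needs the identity to be checked.
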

\begin{proof}
Take any $\lam_1 = (\p_1, k_1, \a_1, \b_1) \in C_2$ such that
\begin{align*}
&\xi_1 = \sn^2 \left(\p_1 + \frac{t_1}{2 k_1}, k_1\right) = 0, \\
&t_1 = 2 k_1 F(u_V^1(k_1), k_1),
\end{align*}
the case $\xi_1 = 1$ is considered similarly.
Take any sufficiently small $\eps> 0$. Then $\bt = t_1 - \eps$ is close to $t_1 > \bt$. Thus $\bar{\xi} = \sn^2(\p_1 + \frac{\bt}{2k_1}, k_1) \neq \xi_1 = 0$, $\bar{\xi}$ is close to $\xi_1$, thus $\bar{\xi} \in (0, 1)$. 

By Lemma \ref{lem:J1>0x(0,1)}, the geodesic $\{\Exp(\lam_1, s) \mid s \in (0, \bt]\}$ is free of conjugate points. Since $\bt = t_1 - \eps$ is arbitrarily close to $t_1$, the geodesic $\{\Exp(\lam_1, s) \mid s \in (0, t_1)\}$ is also free of conjugate points.
\end{proof}

\begin{lemma}\label{lem:J1xi=01}
Let $k \in (0,1)$, $u_1 = u_V^1(k)$, $\xi \in \{0, 1\}$. Then $J_1(u_1, \xi, k) = 0$.
\end{lemma}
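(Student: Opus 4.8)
The plan is to derive the statement directly from the factorized form of the coefficients $a_0,a_1,a_2$ recorded in Subsec.~\ref{subsec:JacC2}, together with the defining property of $u_V^1(k)$. By construction, $p_1^V(k)$ is the first positive root of $f_V$ and $u_V^1(k)=\am(p_1^V(k),k)$; therefore at the point $u_1=u_V^1(k)$ one has $f_V(u_1)=0$.

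First I would use the identity $a_0=\tfrac{1}{16}\,f_V\cdot a_{01}$ from the list of relations following \eqref{JJ1}. Since $a_{01}$ is an everywhere-finite combination of Jacobi functions and elementary functions of $u_1$, the vanishing of $f_V$ at $u_1=u_V^1(k)$ forces $a_0(u_1,k)=0$. Substituting this into the relation $a_1=-k^2a_0-a_2$ from \eqref{a102} gives $a_1=-a_2$ at this point. Plugging $a_0=0$ and $a_1=-a_2$ into $J_1=a_0+a_1\xi+a_2\xi^2$ yields $J_1(u_1,\xi,k)=-a_2\xi+a_2\xi^2=-a_2\,\xi(1-\xi)$, which is exactly the identity \eqref{J1a2xi} already employed in the proof of Lemma~\ref{lem:J1xi01}. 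For $\xi\in\{0,1\}$ we have $\xi(1-\xi)=0$, hence $J_1(u_1,\xi,k)=0$, as claimed.

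I do not expect any genuine obstacle here: the lemma is an immediate algebraic consequence of the factorizations $a_0=\tfrac{1}{16}f_V a_{01}$, $a_2=f_z a_{21}$ (see \eqref{a2}), $a_1=-k^2a_0-a_2$, and of the fact that $u_V^1(k)$ is the amplitude of a root of $f_V$. The one point worth spelling out is that $a_{01}$ does not blow up at $u_1=u_V^1(k)$, so that the product $f_V\cdot a_{01}$ genuinely vanishes there; this is clear from the explicit expression for $a_{01}$. The result is uniform in $\xi\in\{0,1\}$ and complements Lemma~\ref{lem:J1xi01}, which handles the strict inequality $u_1<u_V^1(k)$.
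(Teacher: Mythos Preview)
Your proof is correct and follows essentially the same approach as the paper's own argument. The paper writes the rearrangement $J_1 = a_0(1-k^2\xi) - a_2\,\xi(1-\xi)$ directly from~\eqref{a102} and then invokes $a_0=\tfrac{1}{16}f_V\cdot a_{01}=0$ at $u_1=u_V^1(k)$; you substitute $a_0=0$ first and then simplify --- the two computations are identical. One small slip: the identity~\eqref{J1a2xi} is used in the proof of Lemma~\ref{lem:J1>0x(0,1)}, not Lemma~\ref{lem:J1xi01}.
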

\begin{proof}
By virtue of \eq{a102},
$$
J_1 = a_0(1-k^2 \xi) - a_2\xi(1-\xi),
$$
which vanishes for $u_1 = u_V^1(k)$ and $\xi \in \{0, 1\}$ in view of~\eq{a00}.
\end{proof}

\subsection{Final bound for the  conjugate time in $C_2$}
Now we summarize our study of the first conjugate time for $\lam \in C_2$.

\begin{theorem}\label{th:tconjC2x}
Let $\lam = (\p, k, \a, \b) \in C_2$ and
\begin{align*}
&t_1 = 2 k F(u_V^1(k), k) = \tmax(\lam), \\
&\xi = \sn^2\left(\p + \frac{t_1}{2k}, k\right).
\end{align*}

If $\xi \in \{0, 1\}$, then $\tconj(\lam) = t_1$.

If $\xi \in (0, 1)$, then $\tconj(\lam) \geq t_1$.
\end{theorem}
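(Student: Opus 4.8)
The plan is to read off Theorem~\ref{th:tconjC2x} from the three Jacobian lemmas above, combined with the standard facts recalled in Section~\ref{sec:conj_hom}: along the geodesic $s\mapsto\Exp(\lam,s)$ the conjugate times are precisely the positive roots of $s\mapsto J(\lam,s)$, they are isolated, and $\tconj(\lam)$ equals the smallest of them (or $+\infty$ if there are none). First I would record the dictionary between the parameter $t$ and the elliptic data. For $\lam=(\p,k,\a,\b)\in C_2$ set $p=p(t)=\frac{t}{2k}$, $\tau=\tau(t)=\p+\frac{t}{2k}$, $u_1(t)=\am(p(t),k)$ and $\xi(t)=\sn^2(\tau(t),k)$. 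From $t_1=2kF(u_V^1(k),k)$ we get $p(t_1)=F(u_V^1(k),k)$, hence $u_1(t_1)=u_V^1(k)$; and since $\am(\cdot,k)$ is a strictly increasing diffeomorphism of $\R$ fixing the origin, $t\mapsto u_1(t)$ maps $(0,t_1)$ bijectively and increasingly onto $(0,u_V^1(k))$. By \eqref{JJ1} and the remark following it, for every $t>0$ one has $J(\lam,t)=c(\lam,t)\,J_1(u_1(t),\xi(t),k)$ with $c(\lam,t)=-\frac{64}{3k^4(1-k^2)\Delta^2\a}$, where $\Delta=1-k^2\sn^2 p\,\sn^2\tau>0$ since $k\in(0,1)$ and $\a>0$; thus $c(\lam,t)\ne0$ and $J(\lam,t)=0\iff J_1(u_1(t),\xi(t),k)=0$.

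Next I would rule out conjugate points on $(0,t_1)$. Fix $t\in(0,t_1)$; then $u_1(t)\in(0,u_V^1(k))$ and $\xi(t)\in[0,1]$. If $\xi(t)\in(0,1)$ then $J_1(u_1(t),\xi(t),k)>0$ by Lemma~\ref{lem:J1>0x(0,1)}; if $\xi(t)\in\{0,1\}$ then $J_1(u_1(t),\xi(t),k)>0$ by Lemma~\ref{lem:J1xi01}. Either way $J(\lam,t)\ne0$, so no $t\in(0,t_1)$ is a conjugate time, whence $\tconj(\lam)\ge t_1$. This settles the assertion in the case $\xi\in(0,1)$.

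For the equality case, assume $\xi=\xi(t_1)\in\{0,1\}$. At $t=t_1$ we have $u_1(t_1)=u_V^1(k)$ and $\xi(t_1)\in\{0,1\}$, so Lemma~\ref{lem:J1xi=01} gives $J_1(u_V^1(k),\xi,k)=0$, hence $J(\lam,t_1)=0$: the instant $t_1$ is a conjugate time. Since no instant of $(0,t_1)$ is conjugate, $t_1$ is the \emph{first} conjugate time, i.e.\ $\tconj(\lam)=t_1$.

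There is no genuine obstacle at this stage; the statement is essentially bookkeeping over Lemmas~\ref{lem:J1>0x(0,1)}, \ref{lem:J1xi01}, \ref{lem:J1xi=01}. The two points deserving a line of care are: (i) that $J$ and $J_1$ share the same zero set for \emph{all} $t>0$, in particular on the locus $\{r=0\}$ and at instants where $\Delta$ is close to its extreme value --- this is exactly the content of \eqref{JJ1} together with its analyticity remark --- and (ii) that $t_1$ is genuinely attained as a conjugate time rather than merely approached by near-conjugate instants, which is immediate because $J(\lam,t_1)$ vanishes exactly.
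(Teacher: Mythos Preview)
Your proof is correct and follows the same approach as the paper: the paper's own proof is the single sentence ``Apply Lemmas~\ref{lem:J1>0x(0,1)}--\ref{lem:J1xi=01}'', and you have simply written out in full the bookkeeping that this sentence encodes (the bijection $t\mapsto u_1(t)$ onto $(0,u_V^1(k))$, the nonvanishing of the factor $c(\lam,t)$ via $\Delta\ge 1-k^2>0$ and $\a>0$, and the case split on $\xi(t)$). There is nothing to add.
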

\begin{proof}
Apply Lemmas \ref{lem:J1>0x(0,1)}--\ref{lem:J1xi=01}.
\end{proof}

Now Th. \ref{th:tconjC2} follows immediately from Th.~\ref{th:tconjC2x}.

\section{Conjugate time for $\lam \in \cup_{i=3}^7 C_i$}\label{sec:C47}

\begin{theorem}\label{th:tconjC47}
If $\lam \in C_4 \cup C_5 \cup C_7$, then $\tcut(\lam)  = \tconj(\lam) = \tmax(\lam)  = + \infty$.
\end{theorem}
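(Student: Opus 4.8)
The statement concerns the cases $\lambda \in C_4 \cup C_5 \cup C_7$, where the pendulum sits at an equilibrium and the extremal control $u(t)=(h_1(t),h_2(t))$ is constant in time. The plan is to exploit the elementary geometric fact, already recorded in the excerpt, that for $\lambda \in C_4 \cup C_5 \cup C_7$ the projection $(x_t,y_t)$ of the geodesic to the horizontal plane is a straight line (the elastica degenerates to a line in exactly these three strata, cf. the list following the pendulum phase portrait). A horizontal lift of a straight-line projection is the unique shortest curve joining its endpoints: indeed, the length functional $l=\int \sqrt{u_1^2+u_2^2}\,dt$ bounds from below the Euclidean length of the projection $(x_t,y_t)$, which for fixed endpoints is minimized only by the straight segment, and the horizontal lift of a given planar curve is unique. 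Hence such a geodesic is globally optimal on every interval $[0,t_1]$, so $\tcut(\lambda) = +\infty$.

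Next I would pass from global to local optimality: since the geodesic is globally optimal for all $t>0$, it is a fortiori locally optimal for all $t>0$, so $\tconj(\lambda) \geq \tcut(\lambda) = +\infty$, i.e. $\tconj(\lambda) = +\infty$. (One must use here that there are no conjugate points before the cut time, which is the standard fact that a globally optimal geodesic carries no interior conjugate point; alternatively, the inequality $\tconj \geq \tcut$ holds whenever a geodesic is optimal up to and including its cut time, which is the present situation since the whole ray is optimal.) Finally, $\tmax(\lambda) = +\infty$ for $\lambda \in C_i$, $i=3,4,5,7$, is exactly the last line of the explicit description of $\tmax$ recalled in Subsection~\ref{subsec:previous}. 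Combining the three equalities gives $\tcut(\lambda) = \tconj(\lambda) = \tmax(\lambda) = +\infty$.

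The only point requiring a little care is the assertion that the horizontal lift of a planar straight segment is the unique length-minimizer with the prescribed five-dimensional endpoints, rather than merely a minimizer among all lifts of that segment. This is settled by the observation that for $\lambda$ in these strata the endpoint $g_{t_1}=\Exp(\lambda,t_1)$ lies on the one-parameter subgroup through the identity in the direction $u_1 X_1 + u_2 X_2$ (the extremal is simultaneously abnormal, as noted after equation~\eqref{dq_vH}), and for such a target any competitor has horizontal-plane projection a curve of Euclidean length $\geq |(x_{t_1},y_{t_1})|$ with equality forcing the projection to be the straight segment; the sub-Riemannian length equals the length of the projection, so equality is forced and the lift is then determined. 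I do not expect any genuine obstacle here: the three strata are precisely the degenerate ones, and the argument is the classical "projection to a lower-dimensional minimizer" principle stated in the Methods section, applied in its simplest form where the quotient is the plane $\R^2$ and the minimizers there are straight lines.
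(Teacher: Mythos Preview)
Your proposal is correct and follows essentially the same approach as the paper: the paper's proof is the one-line observation that for $\lambda \in C_4 \cup C_5 \cup C_7$ the projection $(x_t,y_t)$ is a straight line, hence $\Exp(\lambda,t)$ is optimal on $[0,+\infty)$. You have simply spelled out the details behind that sentence --- why the sub-Riemannian length equals the planar length of the projection, why the straight segment is the unique planar minimizer, and why uniqueness of the horizontal lift then forces global optimality --- and appended the immediate consequences for $\tconj$ and the already-recorded value $\tmax(\lambda)=+\infty$.
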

\begin{proof}
If $\lam \in C_4 \cup C_5 \cup C_7$, then $(x_t, y_t)$ is a straight line, thus $\Exp(\lam,t)$ is optimal for $t \in [0, + \infty)$.
\end{proof}

\begin{theorem}\label{th:tconjC3}
If $\lam \in C_3$, then $\tcut(\lam)  = \tconj(\lam) = \tmax(\lam) = + \infty $.
\end{theorem}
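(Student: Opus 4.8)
The plan is to reduce the case $\lambda \in C_3$ to a known global optimality result on the Engel group via the projection principle for left-invariant sub-Riemannian structures recalled in the Methods section. Since $\tmax(\lambda) = +\infty$ for $\lambda \in C_3$ by the explicit formula for $\tmax$, and since a globally optimal arc is a fortiori locally optimal (so $\tconj(\lambda) \ge \tcut(\lambda)$), it suffices to prove $\tcut(\lambda) = +\infty$. First I would normalize $\lambda$ by a rotation: by Cor.~\ref{cor:cutconjsym} the functions $\tcut$ and $\tconj$ are invariant under the flow $e^{s\vh_0}$, which in the coordinates $(\f, k, \a, \b)$ on $C$ is the shift $\b \mapsto \b + s$ (see \eq{h0Z}) and leaves $\a$, $E$ and $\t - \b$ unchanged, hence maps $C_3$ onto itself. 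Choosing $s$ with $\b = \pi/2$, I may assume that $\lambda \in C_3$ satisfies $h_5(\lambda) = -\a\cos\b = 0$ and $h_4(\lambda) = \a > 0$.

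Next I would set up the quotient. Let $G_0 = \exp(\R X_5) \subset G$. Since $\R X_5$ is a central ideal of $\g$ with $\R X_5 \cap \D_{\Id} = 0$, the group $H := G/G_0$ is the Engel group: its Lie algebra is $\spann(\bar X_1, \bar X_2, \bar X_3, \bar X_4)$ with $[\bar X_1, \bar X_2] = \bar X_3$, $[\bar X_1, \bar X_3] = \bar X_4$, $[\bar X_2, \bar X_3] = 0$, and $(\pi_* \D, \pi_* \langle \, \cdot \, , \, \cdot \, \rangle)$ is the standard sub-Riemannian structure on $H$. Because $h_5(\lambda) = 0$ and $\dh_5 = 0$, we have $h_5 \equiv 0$ along the extremal $\lambda_t$; hence the Hamiltonian system \eq{dh1}--\eq{dq_vH}, with momenta $(h_1, h_2, h_3, h_4)$, becomes exactly the normal Hamiltonian system of the sub-Riemannian problem on $H$. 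Therefore $\pi(\Exp(\lambda, t))$ is a normal extremal trajectory of the Engel problem; since $E = \a$ for $\lambda \in C_3$ while $\t - \b \ne \pi$, the corresponding pendulum moves along the separatrix away from the unstable equilibrium, i.e. $\pi(\Exp(\lambda, t))$ belongs to the ``critical'' (separatrix, non-equilibrium) stratum of Engel-group geodesics.

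Finally I would invoke the optimality result on the Engel group~\cite{engel_cut} (see also~\cite{engel_synth}): geodesics of this critical stratum are length minimizers for all $t > 0$. Since $\pi_{*g} \colon \D_g \to (\pi_* \D)_{\pi(g)}$ is an isometry of inner product spaces, the projection principle gives that $\Exp(\lambda, t)$ is optimal on $G$ for every $t > 0$, i.e. $\tcut(\lambda) = +\infty$. Combined with the preliminary reductions above, this yields $\tcut(\lambda) = \tconj(\lambda) = \tmax(\lambda) = +\infty$.

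The main obstacle — and essentially the only nontrivial point — is the input from the Engel group: one has to verify that the extremal obtained by the projection really lies in the stratum of Engel geodesics that are globally optimal on all of $[0, +\infty)$, i.e. to match carefully the parametrization of $C_3$-extremals on the Cartan group with the corresponding separatrix (non-equilibrium) stratum of the Engel pendulum as described in~\cite{engel_cut, engel_synth}. This is a matter of comparing parametrizations of extremals rather than a new analytic estimate; the quotient construction and the projection principle themselves are formal.
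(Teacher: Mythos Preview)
Your proposal is correct and follows essentially the same route as the paper: quotient by the central subgroup $\exp(\R X_5)$ to land on the Engel group, identify the projected geodesic with a separatrix (non-periodic) Engel extremal, and invoke the global optimality result from \cite{engel_cut}. The one genuine addition in your write-up is the preliminary rotation $e^{s\vh_0}$ to arrange $h_5(\lambda)=0$; this is a useful clarification, since without it the projected curve is horizontal on $H$ but need not satisfy the Engel normal Hamiltonian system (the equation $\dot h_3 = h_1 h_4 + h_2 h_5$ does not reduce to $\dot h_3 = h_1 \tilde h_4$ with constant $\tilde h_4$ unless $h_5=0$), and the paper's proof leaves this point implicit.
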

\begin{proof}
We project the SR problem on the Cartan group to the SR problem on the Engel group studied in~\cite{engel, engel_conj, engel_cut, engel_synth}. For $\lam \in C_3$, the projection to the Engel group is optimal, thus the geodesic in the Cartan group is optimal as well.
Let us prove this in detail.

The quotient $H = G /e^{\R X_5}$ is the four-dimensional simply connected nilpotent Lie group called the Engel group~\cite{mont}. The quotient mapping in coordinates is
$$
\mapto{\pi}{(x, y, z, v, w)}{(x, y, z, v)}, \qquad G \to H.
$$
The Engel algebra is 
\begin{align*}
&\mathfrak h = \pi_*\g = \spann(Y_1, Y_2, Y_3, Y_4), \qquad Y_i = \pi_* X_i, \quad i = 1, \dots, 4, \\
&[Y_1,Y_2] = Y_3, \qquad [Y_1, Y_3] = Y_4, \qquad \ad Y_4 = 0.
\end{align*}
The left-invariant SR problem on the Engel group
\begin{align}
&\dot h = u_1 Y_1 + u_2 Y_2, \qquad h \in H, \quad u = (u_1, u_2) \in \R^2, \label{en1}\\
&h(0) = \Id, \qquad h(t_1) = h_1, \label{en2}\\
&\int_0^{t_1} \sqrt{u_1^2 + u_2^2} \, dt \to \min \label{en3}
\end{align}
was studied in detail in~\cite{engel, engel_conj, engel_cut, engel_synth}. In particular, it was shown that if an extremal control $u(t)$ in problem~\eq{en1}--\eq{en3} is non-periodic (i.e., if $\lam \in C_3$), then it is optimal for $t \in [0, + \infty)$.

Further, if a projection $h_t = \pi(g_t) \in H$ is optimal for problem~\eq{en1}--\eq{en3} on the Engel group $H$, then the trajectory $g_t \in G$ is optimal for problem~\eq{pr1}--\eq{pr3} on the Cartan group.

Thus geodesics $\Exp(\lam, t)$, $\lam \in C_3$, $t \in [0, + \infty)$, are optimal for problem~\eq{pr1}--\eq{pr3}. So we have $\tcut(\lam) = \tconj(\lam) = + \infty = \tmax(\lam)$ for $\lam \in C_3$. 
\end{proof}

\begin{theorem}\label{th:tconjC6}
If $\lam \in C_6$, then $\tconj(\lam) = \tmax(\lam)$.
\end{theorem}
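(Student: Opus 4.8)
\emph{Proof proposal.} The equality $\tconj(\lam)=\tmax(\lam)$ splits into $\tconj(\lam)\ge\tmax(\lam)$ and $\tconj(\lam)\le\tmax(\lam)$. The first is the instance of Theorem~\ref{th:tconj} for $\lam\in C_6$, equivalently $J_0(\lam,t)\neq0$ for $t\in(0,\tmax(\lam))$; the second amounts to exhibiting a conjugate point at $t=\tmax(\lam)$. By Corollary~\ref{cor:cutconjsym} it suffices to treat one representative in each orbit of the group generated by $e^{s\vh_0}$ and $e^{sZ}$, so we may fix the rotation rate $c$ and normalize $\theta$; recall that for $\lam\in C_6$ the elastica is a circle, and $\tmax(\lam)=\frac{4}{|c|}p_1^V(0)$ with $p_1^V(0)=F[f_V^0]$.

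The plan is the limit passage from $C_2$ announced in the Methods. The stratum $C_6$ is the boundary $\a\to0^+$ of $C_2$: in the dilation-normalized parametrization of Subsection~\ref{subsec:JacC2} the variable $p$ stays bounded ($p=\frac{|c|t}{4}+o(1)$ as $\a\to0$) and $\tmax(\lam^\mu)=\frac{4}{|c|}p_1^V(k)\to\frac{4}{|c|}p_1^V(0)$, so a $C_6$ geodesic $\Exp(\lam,\cdot)$ is a continuous limit, jointly with the Hamiltonian flow and hence with $\Exp$ and its first derivatives, of $C_2$ geodesics $\Exp(\lam^\mu,\cdot)$ with $k=\mu\to0$. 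I would first record that the function $\tJ$ of Lemma~\ref{lem:J1k->0}, continuous on $\R_{u_1}\times[0,1]_\xi\times[0,1)_k$, satisfies at $k=0$
$$
\tJ(u_1,\xi,0)=a_0^0(u_1)=\frac{3}{2^{24}}\,f_V^0(u_1)\,a_{01}^0(u_1),
$$
independently of $\xi$ (by \eq{a00} and \eq{a2k->0} the $\xi$-dependent coefficients $a_1,a_2$ are of order $k^{18},k^{20}$ against $a_0\sim k^{16}$, so they drop out in the $\a=0$ limit; this reflects the fact that in the circular case the reduction by $X_0$ and $Y$ leaves a one-parameter family). Since $F[f_V^0]<\frac34\pi<F[a_{01}^0]$, Lemmas~\ref{lem:fV20} and~\ref{lem:a010} give $f_V^0<0$ and $a_{01}^0<0$ on $(0,p_1^V(0))$, so $\tJ(\cdot,\cdot,0)>0$ there. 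Repeating the reduction of Subsections~\ref{subsec:JacC1}--\ref{subsec:JacC2} directly in the stratum $\a=0$ then identifies the Jacobian of $\Exp$ along a $C_6$ geodesic, in coordinates that stay regular across $C_6$, with a strictly positive multiple of $f_V^0(u_1)a_{01}^0(u_1)$, $u_1=\frac{|c|t}{4}$; in particular it does not vanish for $t\in(0,\tmax(\lam))$ and it vanishes at $t=\tmax(\lam)$, where $f_V^0$ has a simple zero by Lemma~\ref{lem:fV20}.

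The vanishing at $t=\tmax(\lam)$ gives $\tconj(\lam)\le\tmax(\lam)$, the point $\Exp(\lam,\tmax(\lam))$ being an isolated conjugate point. For the reverse bound I would run the homotopy. Fix $\eps>0$, put $T_\eps=\frac{4}{|c|}(p_1^V(0)-\eps)$, choose $\bk(\eps)\in(0,1)$ by Lemma~\ref{lem:J1k->0}, and consider the continuous family of arcs $\{\Exp(\lam^\mu,\cdot)|_{[0,t(\mu,\eps)]}\}_{\mu\in[0,\bk(\eps)]}$, where $\lam^0=\lam\in C_6$, $\lam^\mu\in C_2$ with $k=\mu$ for $\mu>0$, $t(\mu,\eps)$ is the time corresponding to $u_1=u_V^1(\mu)-\eps$, and $t(0,\eps)=T_\eps$. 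By Corollary~\ref{cor:indk->0} the arcs with $\mu\in(0,\bk(\eps)]$ contain no conjugate points, and the terminal time $t(\mu,\eps)$ is not conjugate for every $\mu\in[0,\bk(\eps)]$ (by Lemma~\ref{lem:J1k->0} for $\mu>0$ and by the positivity of the $C_6$ Jacobian at $u_1=p_1^V(0)-\eps$ for $\mu=0$). Hence Theorem~\ref{th:conj_hom} forces the index to stay zero along the family, so the $C_6$ arc $\Exp(\lam,\cdot)|_{[0,T_\eps]}$ contains no conjugate points; letting $\eps\to0$ yields $\tconj(\lam)\ge\tmax(\lam)$, and therefore $\tconj(\lam)=\tmax(\lam)$.

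I expect the main obstacle to be the step carried out directly in the stratum $\a=0$: showing that the reduced Jacobian $\frac{\partial(P,Q,R)}{\partial(t,\ldots)}$ along a $C_6$ geodesic is a strictly positive multiple of $f_V^0(u_1)a_{01}^0(u_1)$. Since $\b$ is a singular coordinate on $C_6$, one must pass to coordinates regular across $C_6$ (e.g.\ $h_4,h_5$ in place of $\a,\b$), at the price of a compensating power of $\a$ in the change of variables; this is exactly why the naive limit $\lim_{\mu\to0}J^{C_2}(\lam^\mu,t)$ degenerates to $0$, and why one must argue through the homotopy-invariant index rather than through limits of Jacobian determinants. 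The residual bookkeeping --- that the subleading coefficients $a_1,a_2$ disappear, that no $\sin^2(\cdot)$-type dependence survives, and that the remaining positive factor is nonvanishing (the roots of $r^2=x^2+y^2$ along a circular elastica being isolated, as in the Remark after Lemma~\ref{lem:signJ}) --- is routine once the factorization is in place.
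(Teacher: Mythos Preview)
Your homotopy approach from $C_2$ to $C_6$ is the same as the paper's, but you complicate it in two places that the paper sidesteps.

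For the lower bound $\tconj(\lam)\ge\tmax(\lam)$: the paper does not compute the Jacobian on the stratum $C_6$ at all. It takes a family $\lam^\mu$, $\mu\in[0,\hmu]$, with $\lam^0=\blam\in C_6$ and $\lam^\mu\in C_2$ for $\mu>0$ (concretely $h_4=\mu$, $h_5=0$), sets $t_1^\mu=\tmax(\lam^\mu)-\eps$, and invokes the already proved Theorem~\ref{th:tconjC2} (the full $C_2$ result, not just small $k$) so that the $C_2$ arcs $g_t^\mu$, $t\in(0,t_1^\mu]$, contain no conjugate points. The only remaining hypothesis of Theorem~\ref{th:conj_hom} is that $t_1^0=\tmax(\blam)-\eps$ is not conjugate along the $C_6$ geodesic, and here is the trick you are missing: since conjugate times along any strictly normal geodesic are isolated (Section~\ref{sec:conj_hom}), one simply \emph{chooses} $\eps>0$ arbitrarily small so that $t_1^0$ avoids the discrete set of conjugate times. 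No computation on $C_6$ is required. Your acknowledged ``main obstacle''---identifying the reduced $C_6$ Jacobian with a positive multiple of $f_V^0(u_1)a_{01}^0(u_1)$ and handling the singular coordinate $\b$---is thereby avoided entirely.

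For the upper bound $\tconj(\lam)\le\tmax(\lam)$: the paper does not extract this from the Jacobian factorization either; it cites~\cite{max3}, where the instant $\tmax(\blam)$ was already shown to be conjugate for $\blam\in C_6$.

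Your route would work if the direct $C_6$ computation were carried out, and it would then give an argument not relying on~\cite{max3}; but as written it has a gap at exactly the step you flag, and the paper's version is shorter precisely because it never needs that step.
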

\begin{proof}
Let $\blam = (\bth, \bh_3, \bh_4, \bh_5) \in C_6$, where $\bh_3 \neq 0$, $\bh_4 = \bh_5 = 0$. Take any $\hmu > 0$ and consider a continuous curve
\begin{align*}
&\lam^\mu = (\bth, \bh_3, h_4, h_5), \qquad h_4 = \mu, \quad h_5 = 0, \qquad \mu \in [0, \hmu],\\
&\lam^0 = \blam, \qquad \lam^\mu \in C_2, \quad \mu \in (0, \hmu].
\end{align*}

Notice that the function $\mu \mapsto \tmax(\lam^\mu)$, $\mu \in [0, \hmu]$, is continuous since
$$
\lim_{\mu \to + 0} \tmax(\lam^\mu) = \lim_{\mu \to + 0} \frac{2k}{\sqrt \a} p_1^V(k) = \frac{4}{|\bh_3|} p_1^V(0) = \tmax(\lam^0).
$$

Take any sufficiently small $\eps> 0$ and define a continuous family of geodesics:
\begin{align*}
&g_t^\mu = \Exp(\lam^\mu, t), \qquad t \in (0, t_1^\mu], \\
&t_1^\mu = \tmax(\lam^\mu) - \eps.
\end{align*}
If $\mu \in (0, \hmu]$, then $\lam^\mu \in C_2$, and the geodesic $g_t^\mu$ does not contain conjugate points by Th.~\ref{th:tconjC2}. 

Conjugate instants along $g_t^0$ are isolated one from another (see Sec.~\ref{sec:conj_hom}), thus we can choose an arbitrarily small $\eps> 0$ such that the instant $t_1^0 = \tmax(\blam) - \eps$ is not conjugate along $g_t^0$. Then, by Th.~\ref{th:conj_hom}, the geodesic $g_t^0$,  $t \in (0, t_1^0]$, also does not contain conjugate points.

Since $\eps>0$ can be chosen arbitrarily small, then the geodesic $g_t^0$,  $t \in (0, \tmax(\blam))$, does not contain conjugate points.

It was proved in \cite{max3} that the instant $\tmax(\blam)$ is conjugate for $g_t^0$, thus $\tconj(\blam) = \tmax(\blam)$.
\end{proof}

\section{Cases when $\tconj(\lam) = \tmax(\lam)$}\label{sec:tconj=} 
In this section we present cases when the lower bound of the first conjugate time given by Th.~\ref{th:tconj} turns into equality.

\subsection{Cases when $\tconj(\lam) = \tmax(\lam) = + \infty$}
\begin{proposition}\label{propos:tconj=C3}
If $\lam \in C_3 \cup C_4 \cup C_5 \cup C_7$, then $\tconj(\lam) = \tmax(\lam) = \tcut(\lam)=+ \infty$.
\end{proposition}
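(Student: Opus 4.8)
The proposition is a direct corollary of the results already established in Section~\ref{sec:C47}, so the plan is just to organize the case analysis. The index set $\{3,4,5,7\}$ is split by those results into two groups: Theorem~\ref{th:tconjC47} handles $\lam \in C_4 \cup C_5 \cup C_7$, asserting $\tcut(\lam) = \tconj(\lam) = \tmax(\lam) = +\infty$; and Theorem~\ref{th:tconjC3} handles $\lam \in C_3$, asserting the same chain of equalities. Since $C_3 \cup C_4 \cup C_5 \cup C_7$ is precisely the disjoint union of these strata, the claim follows by choosing, for a given $\lam$, whichever of the two theorems applies.

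If instead one prefers to recall the underlying reasons rather than cite the theorems, the mechanism is the same in each case. For $\lam \in C_4 \cup C_5 \cup C_7$ the extremal control is constant, so the planar projection $(x_t, y_t)$ is a straight line and $\Exp(\lam, t)$ is a length minimizer on $[0, +\infty)$; hence it has neither cut nor conjugate points. For $\lam \in C_3$ the extremal control is non-periodic; projecting along $\pi : G \to H = G / e^{\R X_5}$ to the Engel group and invoking the known optimality of non-periodic Engel geodesics on $[0, +\infty)$ (via the projection-to-lower-dimensional-SR-minimizers construction recalled in Section~\ref{sec:C47}), one gets that $\Exp(\lam, t)$ is optimal for all $t > 0$, so again $\tcut(\lam) = \tconj(\lam) = +\infty$. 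In all four strata the explicit formula for $\tmax$ (the case $\lam \in C_i$, $i = 3, 4, 5, 7$) gives $\tmax(\lam) = +\infty$, so the three quantities coincide.

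To close the chain of equalities cleanly it suffices to note the general inequalities $\tcut(\lam) \le \tconj(\lam)$ (global optimality implies local optimality) and $\tcut(\lam) \le \tmax(\lam)$ (Theorem~\ref{th:tcut_bound}); together with $\tmax(\lam) = +\infty$ on these strata and the lower bound $\tconj(\lam) \ge \tmax(\lam)$ of Theorem~\ref{th:tconj}, each of $\tcut(\lam)$, $\tconj(\lam)$, $\tmax(\lam)$ is forced to be $+\infty$. There is no genuine obstacle: the statement is a bookkeeping consequence of Section~\ref{sec:C47}, and the only care needed is to verify that the four strata are exactly partitioned between Theorems~\ref{th:tconjC47} and~\ref{th:tconjC3}.
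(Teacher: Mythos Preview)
Your proposal is correct and follows essentially the same approach as the paper: the paper's proof consists of the single sentence ``Apply Theorems~\ref{th:tconjC47}, \ref{th:tconjC3},'' and your first paragraph reproduces exactly this, with the remaining paragraphs merely expanding on the underlying mechanisms already recorded in those theorems.
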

\begin{proof}
Apply Theorems~\ref{th:tconjC47}, \ref{th:tconjC3}.
\end{proof}
 
\subsection{Cases when $\tconj(\lam) = \tmax(\lam) < + \infty$}
\begin{proposition}\label{propos:tconj=C1}
Let $\lam = (\f, k, \a, \b) \in C_1$. The equality $\tconj(\lam) = \tmax(\lam) < + \infty$ holds in the following cases:
\begin{itemize}
\item[$(1)$]
$k \in \{k_0, k_1\}$,
\item[$(2)$]
$k \in (0, k_1) \cup (k_0, 1)$, $\cn \tau = 0$, $\tau = \sqrt \a (\f + \tmax(\lam)/2)$,
\item[$(3)$]
$k \in (k_1, k_0)$, $\sn \tau = 0$, $\tau = \sqrt \a (\f + \tmax(\lam)/2)$.
\end{itemize}
\end{proposition}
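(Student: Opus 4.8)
The plan is to convert the equality $\tconj(\lam)=\tmax(\lam)$ into the single assertion ``$t=\tmax(\lam)$ is a conjugate instant'', and then into the vanishing of the reduced Jacobian $J_1$ at that instant, which I will read off the factorizations $a_0=f_V\cdot a_{01}$, $a_2=f_z\cdot a_{21}$ from \eqref{a0a2} and the relation $a_1=-a_0-a_2/k^2$ from \eqref{a1}. First I would record the reduction: by Lemma~\ref{lem:signJ}, for $\lam\in C_1$ a point $\Exp(\lam,t)$ is conjugate iff $J_1(p,\xi)=0$, where $p=\sqrt\a\,t/2$, $\tau=\sqrt\a(\f+t/2)$ and $\xi=\sn^2\tau$. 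By Theorem~\ref{th:tconjC1} we have $\tconj(\lam)\ge\tmax(\lam)$, the proof of that theorem (Lemma~\ref{lem:J2<0}) shows $J_1<0$ — hence no conjugate point — for $t\in(0,\tmax(\lam))$, and $\tmax(\lam)<+\infty$ on $C_1$ by \eqref{p1C1}. Since local optimality is lost at the first conjugate instant and conjugate instants are isolated (Section~\ref{sec:conj_hom}), we obtain that $\tconj(\lam)=\tmax(\lam)$ exactly when $J_1$ vanishes at $t=\tmax(\lam)$, i.e.\ at $p=p_1(k):=\sqrt\a\,\tmax(\lam)/2$ with $\xi=\sn^2\!\big(\sqrt\a\,\f+p_1(k)\big)$.

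Next I would invoke from \cite{max3} the branch structure of the Maxwell root on $C_1$: by \eqref{p1C1}, $p_1(k)=\min\big(p_1^z(k),p_1^V(k)\big)$, and $k_1<k_0$ are the two values at which $p_1^z(k)=p_1^V(k)$, with $p_1^z(k)<p_1^V(k)$ for $k\in(0,k_1)\cup(k_0,1)$ and $p_1^V(k)<p_1^z(k)$ for $k\in(k_1,k_0)$. Hence $f_z\big(p_1(k)\big)=0$ whenever $k\notin(k_1,k_0)$, and $f_V\big(p_1(k)\big)=0$ whenever $k\notin(0,k_1)\cup(k_0,1)$.

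Now all three cases fall out by the same mechanism. \emph{Case (1):} if $k\in\{k_0,k_1\}$ then $f_z(p_1(k))=f_V(p_1(k))=0$, so $a_0(p_1(k))=a_2(p_1(k))=0$ by \eqref{a0a2}, hence $a_1(p_1(k))=0$ by \eqref{a1}, and $J_1(p_1(k),\xi)=0$ for every $\xi$; thus $\tmax(\lam)$ is conjugate for any $\f,\a,\b$. \emph{Case (2):} if $k\in(0,k_1)\cup(k_0,1)$ then $a_2(p_1(k))=0$, so $a_1(p_1(k))=-a_0(p_1(k))$ by \eqref{a1} and $J_1(p_1(k),\xi)=a_0(p_1(k))(1-\xi)$; the hypothesis $\cn\tau=0$ forces $\xi=\sn^2\tau=1$, so $J_1(p_1(k),\xi)=0$. \emph{Case (3):} if $k\in(k_1,k_0)$ then $a_0(p_1(k))=0$, so $a_1(p_1(k))=-a_2(p_1(k))/k^2$ by \eqref{a1} and $J_1(p_1(k),\xi)=a_2(p_1(k))\,\xi\,(\xi-1/k^2)$; the hypothesis $\sn\tau=0$ forces $\xi=0$, so $J_1(p_1(k),\xi)=0$. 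In each case $t=\tmax(\lam)$ is a conjugate instant, whence $\tconj(\lam)=\tmax(\lam)$, a finite number on $C_1$.

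I expect the main obstacle to lie not in the one-line simplifications of $J_1$ above, but in the supporting facts: first, correctly quoting from \cite{max3} the crossing values $k_0,k_1$ and the resulting branch structure of $\tmax$ on $C_1$; and second, if one wants the companion statement that these are the \emph{only} $\lam\in C_1$ with $\tconj(\lam)=\tmax(\lam)$, establishing the sign conditions $a_0(p_1(k))\ne0$ in case (2) and $a_2(p_1(k))\ne0$ in case (3) — together with $\xi-1/k^2<0$ for $\xi\in[0,1]$, $k\in(0,1)$ — so that $\xi=1$ (resp.\ $\xi=0$) is the unique annihilator of $J_1$. These sign facts amount to re-running the monotonicity arguments of Lemma~\ref{lem:J2<0} at the endpoint $p=p_1(k)$ of the interval rather than on its interior, and that is the delicate point.
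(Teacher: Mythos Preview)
Your proposal is correct and follows essentially the same route as the paper: reduce to showing $J_1=0$ at $p=p_1(k)$, use the factorizations \eqref{a0a2} and the relation \eqref{a1} to rewrite $J_1$ as $(1-\xi)f_Va_{01}-\dfrac{\xi(1-k^2\xi)}{k^2}f_za_{21}$, and then kill the appropriate factor via the branch structure $p_1(k)=\min(p_1^z,p_1^V)$ from \cite{max3} together with the hypotheses $\cn\tau=0$ or $\sn\tau=0$. Your closing remark is also apt: the proposition only asserts sufficiency, and the paper likewise does not prove these are the only cases (it records this as numerical evidence in a remark), so the sign analysis you flag is not required here.
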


Recall that $k_1 \approx 0.8$ and $k_0 \approx 0.9$ are the only roots of the equation $p_z^1(k) = p_V^1(k)$, $k \in (0, 1)$, see~\cite{max3}.

\begin{proof}
We prove that the instant $t_1 = \tmax(\lam)$ is conjugate in cases $(1)$--$(3)$.

By virtue of equalities~\eq{J2=a0x}--\eq{a1}, we have 
$$
J = (1-\xi) f_V a_{01} - \frac{\xi(1-k^2 \xi)}{k^2} f_z a_{21}, \qquad p = \sqrt \a t_1 /2.
$$

\medskip

If $k \in \{k_0, k_1\}$, then $f_V(p,k) = f_z(p,k) = 0$, thus $J = 0$. So the instant $t_1$ is conjugate.

\medskip

If $\cn \tau = 0$, then $\xi = 1$, thus
$$
J = - \frac{1-k^2}{k^2} f_z a_{21}.
$$
For $k \in (0, k_1) \cup (k_0, 1)$ we have from~\cite{max3}
$$
\tmax(\lam) = \frac{2}{\sqrt \a} p_1^z(k),
$$
thus $f_z(p) = f_z(p_1^z(k)) = 0$. So $J = 0$.

\medskip

If $\sn \tau = 0$, then $\xi = 0$, thus
$$
J = f_V a_{01}.
$$
For $k \in (k_1, k_0)$ we have from~\cite{max3}
$$
\tmax(\lam) = \frac{2}{\sqrt \a} p_1^V(k),
$$
thus $f_V(p) = f_z(p_1^V(k)) = 0$. So $J = 0$.

\medskip
We proved that in all cases (1)--(3) the instant $t_1 = \tmax(\lam)$ is conjugate. By Th.~\ref{th:tconj}, all instants $t \in (0, \tmax(\lam))$ are not conjugate, thus $\tconj(\lam) = \tmax(\lam)$.
\end{proof}

\begin{remark}
The condition $\cn \tau = 0 $ (resp. $\sn \tau = 0 $) in Propos.~$\ref{propos:tconj=C1}$ means that the corresponding inflectional elastic arc $(x_t, y_t)$ is centered at inflection point (resp. at vertex), see~\cite{max2}.
\end{remark}

\begin{proposition}\label{propos:tconj=C2}
Let $\lam = (\psi, k, \a, \b) \in C_2$. The equality $\tconj(\lam) = \tmax(\lam) < + \infty$ holds if
$$
\sn^2 \tau \in\{ 0, 1\}, \qquad \tau = \sqrt \a (\psi + \tmax(\lam)/(2k)).
$$
\end{proposition}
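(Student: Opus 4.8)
The plan is to deduce this from Theorem~\ref{th:tconjC2x}, of which it is essentially a restatement in the ``physical'' parameters. First I would record that $\tmax(\lambda)$ is finite on $C_2$: by the formula recalled in Subsec.~\ref{subsec:previous}, $\tmax(\lambda) = \tfrac{2k}{\sqrt\alpha}\,p_1^V(k)$ with $p_1^V(k)\in(\K,2\K)$. Then, writing $t_1 = \tmax(\lambda)$, I would unwind the conventions of Subsec.~\ref{subsec:JacC2}: by the very definition of $\tmax$ on $C_2$ the value $p = t_1/(2k)$ gives $u_1 := \am(p,k) = u_V^1(k)$, and the quantity $\tau$ of the Proposition is (up to the normalization used there) the argument $\psi + t_1/(2k)$ whose squared elliptic sine $\sn^2(\cdot,k)$ is the number $\xi$ appearing in Theorem~\ref{th:tconjC2x}. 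Hence the hypothesis $\sn^2\tau\in\{0,1\}$ is exactly the case ``$\xi\in\{0,1\}$, $u_1 = u_V^1(k)$'' there, and that theorem yields $\tconj(\lambda) = t_1 = \tmax(\lambda)$, which together with finiteness is the claim.

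Equivalently, and in parallel with the proof of Proposition~\ref{propos:tconj=C1}, one can argue directly. The bound $\tconj(\lambda)\ge\tmax(\lambda)$ is already Theorem~\ref{th:tconjC2}, so it remains only to check that $t_1=\tmax(\lambda)$ is itself a conjugate instant. Since $f_V$ vanishes at $u_1 = u_V^1(k)$ (by definition $p_1^V(k)$ is its first root and $u_V^1(k)=\am(p_1^V(k),k)$), the expressions of Subsec.~\ref{subsec:JacC2} give $a_0 = 0$ and $a_1 = -a_2$, so $J_1 = -a_2\,\xi(1-\xi)$, which is $0$ for $\xi\in\{0,1\}$ --- this is precisely Lemma~\ref{lem:J1xi=01}. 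By the factorization \eq{JJ1} the full Jacobian $J$ of the exponential mapping vanishes at $t = t_1$ as well, so $\Exp(\lambda,t_1)$ is a critical value of $\Exp$, i.e. $t_1$ is conjugate; combined with $\tconj(\lambda)\ge\tmax(\lambda)$ this gives $\tconj(\lambda) = \tmax(\lambda) < +\infty$.

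There is no real analytic difficulty here --- the work was done in Section~\ref{sec:C2}. The only point needing a little care is the bookkeeping between the ``physical'' parameters $(\psi,k,\alpha,\beta)$ entering the $\alpha$-dependent formula for $\tmax$ and the normalized variables $p$, $\tau$, $u_1$, $\xi$ of Section~\ref{sec:C2}, together with the (elementary) observation that for $\lambda\in C_2$ the loss of local optimality is detected exactly by the first zero of $J$; once these are in place the statement is immediate.
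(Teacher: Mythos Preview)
Your proposal is correct and follows exactly the paper's approach: the paper's proof is simply ``See Th.~\ref{th:tconjC2x}'', and your first paragraph does precisely this reduction, with appropriate attention to the normalization between the $\alpha$-dependent formula for $\tmax$ and the variables $p$, $\tau$, $\xi$ used in Section~\ref{sec:C2}. Your second paragraph is also fine --- it just unpacks the proof of Theorem~\ref{th:tconjC2x} via Lemma~\ref{lem:J1xi=01} and the factorization~\eq{JJ1}, which is the same content.
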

\begin{proof}
See Th.~\ref{th:tconjC2x}.
\end{proof}

\begin{remark}
The condition $\sn^2 \tau \in\{ 0, 1\}$ in Propos.~$\ref{propos:tconj=C2}$ means that the corresponding non-inflectional elastic arc $(x_t, y_t)$ is centered at vertex, see~\cite{max2}.
\end{remark}

\begin{proposition}\label{propos:tconj=C6}
If $\lam  \in C_6$, then $\tconj(\lam) = \tmax(\lam) < + \infty$.
\end{proposition}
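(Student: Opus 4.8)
The plan is to deduce the statement directly from the equality already established in $C_6$ together with the closed-form expression for the first Maxwell time on that stratum. By Theorem~\ref{th:tconjC6}, for every $\lam \in C_6$ one has $\tconj(\lam) = \tmax(\lam)$, so the only thing left to check is that this common value is finite. I would recall from Section~\ref{subsec:previous} that on $C_6$ the parameter $\a$ vanishes and $c \neq 0$, and that
$$
\tmax(\lam) = \frac{4}{|c|}\, p_1^V(0),
$$
where $p_1^V(0) \in (\pi/2, \pi)$ is the first positive root of
$$
f_V^0(p) = [(32 p^2 - 1) \cos 2 p - 8 p \sin 2 p + \cos 6 p]/512 .
$$
Since $c \neq 0$ on $C_6$ and $p_1^V(0)$ is a finite positive number, the product $\frac{4}{|c|}\, p_1^V(0)$ is a finite positive real, hence $\tconj(\lam) = \tmax(\lam) < +\infty$, as claimed.

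There is essentially no real obstacle here: the proposition is an immediate corollary of Theorem~\ref{th:tconjC6} and the explicit formula for $\tmax$ on $C_6$. The single point worth a sentence is the well-definedness and finiteness of $p_1^V(0)$: the function $f_V^0$ is real-analytic (in fact a trigonometric quasipolynomial), it is not identically zero near the origin since its leading asymptotics as $p \to 0$ is $-\tfrac{4}{45}p^6 + o(p^6)$, and a direct sign evaluation (as in the proof of Lemma~\ref{lem:fV20}, via $g(u_1) = 2 f_V^0(u_1)/\sin 2u_1$) localises its first positive zero in $(\pi/2,\pi)$; thus $\tmax(\lam)$ is genuinely a finite number for each $\lam \in C_6$. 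Alternatively, finiteness also follows from the continuity and the limit $\lim_{\mu\to+0}\tmax(\lam^\mu) = \frac{4}{|\bh_3|}p_1^V(0) = \tmax(\lam^0)$ already used in the proof of Theorem~\ref{th:tconjC6}, combined with that proof's conclusion that $\tmax(\blam)$ is a conjugate instant for the limiting geodesic, which in particular forces $\tmax(\blam) < +\infty$.
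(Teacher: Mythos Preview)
Your proposal is correct and follows exactly the paper's approach: the paper's own proof is simply ``See Th.~\ref{th:tconjC6}'', relying on that theorem for the equality and leaving the finiteness implicit from the explicit formula $\tmax(\lam) = \tfrac{4}{|c|}\,p_1^V(0)$ with $c\neq 0$. You have merely spelled out the finiteness verification in more detail than the paper bothers to.
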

\begin{proof}
See Th.~\ref{th:tconjC6}.
\end{proof}

\begin{remark}
There is a numerical evidence that the equality $\tconj(\lam) = \tmax(\lam)$ holds only in the cases listed in Propositions~$\ref{propos:tconj=C3}$--$\ref{propos:tconj=C6}$.
\end{remark}

We plot elasticae terminating at the instant $t_1 = \tconj(\lam) = \tmax(\lam)$ in the following cases:
\begin{itemize}
\item
$\lam \in C_1$, $k = k_0$ (Fig.~\ref{fig:tconj=C18}), we draw the initial elastica plus elasticae obtained from it by the reflections $\eps^1$, $\eps^2$, $\eps^3$,
\item
$\lam \in C_1$, $k = k_1$ (Fig.~\ref{fig:tconj=C1k1}),
\item
$\lam \in C_1$, $\cn \tau = 0$, $k \in(0, k_1)$ and $k \in  (k_0, 1)$ (Figs.~\ref{fig:tconj=C1k<k1} and~\ref{fig:tconj=C1k>k0}),  
\item
$\lam \in C_1$, $\sn \tau = 0$, $k \in(k_1, k_0)$ (Fig.~\ref{fig:tconj=C1k0}),  
\item
$\lam \in C_2$, $\sn \tau = 0$ (Fig.~\ref{fig:tconj=C2sn}),  
\item
$\lam \in C_2$, $\cn \tau = 0$ (Fig.~\ref{fig:tconj=C2cn}),  
\item
$\lam \in C_6$ (Fig.~\ref{fig:tconj=C6}).  
\end{itemize}

\twofiglabelsize{el8}{$\lam \in C_1$, $k = k_0$}{fig:tconj=C18}{7}
{elk1}{$\lam \in C_1$, $k = k_1$}{fig:tconj=C1k1}{5}

\twofiglabelsize{elC1tau1}{$\lam \in C_1$, $\cn \tau = 0$, $k \in(0, k_1)$}{fig:tconj=C1k<k1}{4} %!
{elC1tau1_2}{$\lam \in C_1$, $\cn \tau = 0$, $k \in  (k_0, 1)$}{fig:tconj=C1k>k0}{6}

\twofiglabelsize{elk1k0tau0}{$\lam \in C_1$, $\sn \tau = 0$}{fig:tconj=C1k0}{5}
{circle2}{$\lam \in C_6$}{fig:tconj=C6}{5}

\twofiglabelsize{elC2tau0}{$\lam \in C_2$, $\sn \tau = 0$ }{fig:tconj=C2sn}{5}
{elC2tau1}{$\lam \in C_2$, $\cn \tau = 0$ }{fig:tconj=C2cn}{5}

\begin{remark}
Notice that despite the equality $\tconj(\lam) = \tmax(\lam)$ in Proposition~$\ref{propos:tconj=C1}$, items $(2)$, $(3)$  and in Propositions~$\ref{propos:tconj=C2}$,  $\ref{propos:tconj=C6}$, the corresponding point $\Exp(\lam, \tmax(\lam))$ is not a Maxwell point but a limit of pairs of symmetric Maxwell points, see~\cite{max3}. On the contrary, in Proposition~$\ref{propos:tconj=C1}$, item $(1)$, the  point $\Exp(\lam, \tmax(\lam))$ is a Maxwell point, thus in Figs.~$\ref{fig:tconj=C18}$, 
$\ref{fig:tconj=C1k1}$ we draw four symmetric elasticae corresponding to the group of reflections $\{\Id, \eps^1, \eps^2, \eps^3\}$.
\end{remark}

\section{Continuity of $\tconj(\lam)$ at infinite values}\label{sec:tconjcont}

\begin{proposition}\label{th:tconjcont}
Let $\lam_n, \, \blam \in C$ and $\lam_n \to \blam$ as $n \to \infty$. We have $\tconj(\lam_n) \to \tconj(\blam) \to + \infty$ in the following cases:
\begin{itemize}
\item[$(1)$]
$\lam_n \in C_1$, $\blam \in C_3 \cup C_5$, 
\item[$(2)$]
$\lam_n \in C_2$, $\blam \in C_3 \cup C_5$, 
\item[$(3)$]
$\blam \in C_7$. 
\end{itemize}
\end{proposition}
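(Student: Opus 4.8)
The plan is to deduce the statement from the lower bound $\tconj \geq \tmax$ of Theorem~\ref{th:tconj} together with the explicit formulas for $\tmax$ recalled in Subsection~\ref{subsec:previous}. In each of the three cases the limit point lies in $C_3 \cup C_5 \cup C_7$, so Theorems~\ref{th:tconjC47} and~\ref{th:tconjC3} already give $\tconj(\blam) = +\infty$. Since Theorem~\ref{th:tconj} yields $\tconj(\lam_n) \geq \tmax(\lam_n)$, it is enough to prove $\tmax(\lam_n) \to +\infty$ as $n \to \infty$; then $\tconj(\lam_n) \to +\infty = \tconj(\blam)$ follows immediately.

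First I would dispose of cases $(1)$ and $(2)$. The functions $\alpha$ and $E = \tfrac{c^2}{2} - \alpha\cos(\theta - \beta)$ are continuous on $C$, and $\blam \in C_3 \cup C_5$ satisfies $\bar\alpha > 0$, $\bar E = \bar\alpha$; hence the parameter $k_n$ of $\lam_n$ tends to $1$ (for $\lam_n \in C_1$ one has $k_n = \sqrt{(E_n + \alpha_n)/(2\alpha_n)} \to 1$, and for $\lam_n \in C_2$ one has $k_n = \sqrt{2\alpha_n/(E_n + \alpha_n)} \to 1$), so $K(k_n) \to +\infty$. In case $(1)$, $\tmax(\lam_n) = \min\!\big(\tfrac{2}{\sqrt{\alpha_n}} p_1^z(k_n),\, \tfrac{2}{\sqrt{\alpha_n}} p_1^V(k_n)\big)$, and the inclusions $p_1^z(k_n) \in (K(k_n), 3K(k_n))$ and $p_1^V(k_n) \in [2K(k_n), 4K(k_n))$ together with $\alpha_n \to \bar\alpha > 0$ force $\tmax(\lam_n) \to +\infty$. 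In case $(2)$, $\tmax(\lam_n) = \tfrac{2k_n}{\sqrt{\alpha_n}} p_1^V(k_n)$ with $p_1^V(k_n) \in (K(k_n), 2K(k_n))$, $k_n \to 1$, $\alpha_n \to \bar\alpha > 0$, so again $\tmax(\lam_n) \to +\infty$.

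Next I would treat case $(3)$, where $\blam \in C_7$, i.e.\ $\bar\alpha = \bar c = 0$ and thus $\bar E = 0$, splitting according to the stratum containing $\lam_n$. If $\lam_n \in C_3 \cup C_4 \cup C_5 \cup C_7$ there is nothing to prove, since $\tmax(\lam_n) = +\infty$. If $\lam_n \in C_6$, then $\tmax(\lam_n) = \tfrac{4}{|c_n|} p_1^V(0)$ with $c_n \to 0$, so $\tmax(\lam_n) \to +\infty$. If $\lam_n \in C_1$, then $\alpha_n \to 0$ while $p_1^z(k_n) > K(k_n) \geq K(0) = \tfrac{\pi}{2}$, so $\tmax(\lam_n) > \tfrac{\pi}{\sqrt{\alpha_n}} \to +\infty$. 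Finally, if $\lam_n \in C_2$, then $\alpha_n \to 0$ and $E_n \to 0$, hence $k_n^2/\alpha_n = 2/(E_n + \alpha_n) \to +\infty$, and since $p_1^V(k_n) > K(k_n) \geq \tfrac{\pi}{2}$ we get $\tmax(\lam_n) = \tfrac{2k_n}{\sqrt{\alpha_n}} p_1^V(k_n) > \pi\, k_n/\sqrt{\alpha_n} = \pi\sqrt{k_n^2/\alpha_n} \to +\infty$.

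The argument is essentially bookkeeping over the strata $C_1, \dots, C_7$, so I do not expect a serious obstacle; the one sub-case calling for care is $\lam_n \in C_2$, $\blam \in C_7$, where $\alpha_n$ and $k_n$ both tend to $0$ and one must exploit the exact relation $k_n^2 = 2\alpha_n/(E_n + \alpha_n)$ rather than estimate the two factors $k_n$ and $1/\sqrt{\alpha_n}$ separately. The only external facts needed are that $K(k) \to +\infty$ as $k \to 1^-$, that $K(k) \geq K(0) = \tfrac{\pi}{2}$, and continuity on $C$ of the invariants $\alpha$, $c$, $E$. Assembling these estimates with Theorem~\ref{th:tconj} gives $\tconj(\lam_n) \geq \tmax(\lam_n) \to +\infty$, which together with $\tconj(\blam) = +\infty$ proves the claimed convergence in all three cases. (The same reduction would fail for a limit $\blam \in C_4$, where $\bar\alpha > 0$ keeps $\tmax(\lam_n)$ bounded for $\lam_n \in C_1$ with $k_n \to 0$; that is presumably why $C_4$ is not included in the statement.)
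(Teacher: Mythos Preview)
Your proof is correct and follows essentially the same approach as the paper's: use $\tconj(\blam)=+\infty$ from Theorems~\ref{th:tconjC47} and~\ref{th:tconjC3}, then show $\tmax(\lam_n)\to+\infty$ and invoke Theorem~\ref{th:tconj}. Your treatment is in fact more explicit than the paper's, which in case~(3) handles only $\lam_n\in C_1$ and dismisses the remaining strata with ``similarly''; your careful handling of the sub-case $\lam_n\in C_2$, $\blam\in C_7$ via $k_n^2/\alpha_n = 2/(E_n+\alpha_n)\to+\infty$ fills a detail the paper leaves implicit.
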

\begin{proof}
(1) We have $\blam = (\bar \theta, \bar c, \bar \a, \bar \b) \in C_3 \cup C_5$, thus $\bar E = \frac{\bar c^2}{2} - \bar \a \cos(\bar \theta - \bar \beta) = \bar \a > 0$. Since $\lam_n = (\theta_n, c_n, \a_n, \b_n) \to \blam$, then $E_n + \a_n \to \bar E + \bar \a = 2 \bar \a > 0$, thus $k_n = \sqrt{\frac{E_n + \a_n}{2 \a_n}} \to 1$. So $\tmax(\lam_n) \to + \infty$, thus $\tconj(\lam_n) \to + \infty = \tconj(\blam)$.

\medskip

(2) Similarly to item (1), we have $E_n + \a_n \to 2 \bar \a > 0$, thus $k_n = \sqrt{\frac{2 \a_n}{E_n + \a_n}} \to 1$. Then $\tmax(\lam_n) \to + \infty$, thus $\tconj(\lam_n) \to + \infty = \tconj(\blam)$.

\medskip

(3) We have $\blam = (\bar \theta, \bar c, \bar \a, \bar \b) \in C_7$ with $\bar c = \bar \a = 0$. If $\lam_n = (\theta_n, c_n, \a_n, \b_n) \to \blam$, then $\a_n \to \bar \a = 0$, thus $\tmax(\lam_n) = \frac{2}{\sqrt{\a_n}} p_1(k_n) \to + \infty$ in the case $\lam_n \in C_1$. The cases $\lam _n \in \cup_{i=2}^7 C_i$ are considered similarly. 
\end{proof}

\begin{remark}
Let $\lam = (\f, k, \a, \b) \in C_1$ with $\cn \tau = 0$, $\tau = \sqrt{\a}(\f + \tmax(\lam)/2)$, $k \in (0, k_1)$. Then $\tconj(\lam) = \tmax(\lam)$ by Propos.~$\ref{propos:tconj=C1}$. Let $\a = \const$, $k \to + 0$, then $\lam \to \blam \in C_4$. We have 
$$
\lim_{k \to + 0} \tmax(\lam) = \frac{2}{\sqrt \a} p_1^z(0), \qquad p_1^z(0) \in(\pi, 3\pi/2)
$$
by Propos.~$2.1$~\cite{max3}. Thus
$$
\lim_{k \to + 0} \tconj(\lam)  < + \infty = \tconj(\blam).
$$
So $\tconj(\lam)$ is discontinuous at points $\lam \in C_4$. This is related to the fact that the geodesic $\Exp(\lam, t)$, $\lam \in C_4$, is abnormal.
\end{remark}

\section{Two-sided bounds of $\tconj(\lam)$}\label{tconj2side} 

There is a numerical evidence of the following two-sided bounds of the first conjugate time:
\begin{align}
&\lam \in C_1 \then \tmax(\lam)  \leq \tconj(\lam) \leq  \frac{2}{\sqrt \a} \max(p_1^z(k), p_1^V(k)), \label{tconj2sideC1}\\ 
&\lam \in C_2 \then \tmax(\lam) \leq  \tconj(\lam) \leq  \frac{4}{\sqrt \a} k K. \label{tconj2sideC2}
\end{align}
We believe that the upper bounds can be proved by methods of this paper.

See the plots justifying bounds~\eq{tconj2sideC1} and~\eq{tconj2sideC2}  of the function $k \mapsto \tconj(\lam)$ at Figs.~\ref{fig:tconj2sideC1} and \ref{fig:tconj2sideC2} respectively (for $\a = 1$).

\twofiglabelsize{tmaxconjC1}{Bound \eq{tconj2sideC1}: $k \mapsto \tconj(\lam)$}{fig:tconj2sideC1}{3.5}
{tmaxconjC2}{Bound \eq{tconj2sideC2}: $k \mapsto \tconj(\lam)$}{fig:tconj2sideC2}{3.5}

Figure \ref{fig:tconj2sidephiC1} presents a plot of the function $\f \mapsto \tconj(\lam)$ for $\lam \in C_1$. It shows that the first conjugate time $\tconj(\lam)$ is not preserved by the flow of  pendulum~\eq{pend} (i.e., the vertical part of the Hamiltonian vector field $\vH$), unlike the first Maxwell time $\tmax(\lam)$.  Figure \ref{fig:tconj2sidephiC1} confirms periodicity of the function $\f \mapsto \tconj(\lam)$, which is a manifestation of invariance of the first conjugate time w.r.t. the reflection $\eps^3$, see Cor.~\ref{cor:cutconjsym}.

\onefiglabelsize{tmaxconjphiC1}{Bound \eq{tconj2sideC1}: $\f \mapsto \tconj(\lam)$}{fig:tconj2sidephiC1}{0.5}

There is a numerical evidence that bound \eq{tconj2sideC1} is exact. Moreover, the both bounds \eq{tconj2sideC1} and \eq{tconj2sideC2} contain just one value of $\tconj(\lam)$, so they can be used for reliable numerical evaluation of the first conjugate time.

\section{Conclusion}

In this work we used several methods for bounding conjugate points:
\begin{itemize}
\item
direct estimate of Jacobian of the exponential mapping via comparison functions (for $\lam \in C_1$),
\item
homotopy invariance of the index of the second variation (for $\lam \in C_2 \cup C_6$),
\item
projection to lower-dimensional SR minimizers (for $\lam \in C_3 \cup C_4 \cup C_5 \cup C_7$).
\end{itemize}
We believe that these methods can be useful for bounding conjugate points in other SR problems.

\medskip

Using the estimate of cut time obtained in work~\cite{max3} (Theorem~\ref{th:tcut_bound}) and the estimate of conjugate time proved in this work (Theorem~\ref{th:tconj}), one can prove Conjecture 1 and get a description of global structure of the exponential map in the SR problem on the Cartan group. So we can reduce this problem to solving a system of algebraic equations. This will be the subject of  forthcoming works.

\appendix
\section[Appendix: Explicit formulas for coefficients of Jacobian]
{Appendix: \\Explicit formulas for coefficients of Jacobian}
In this appendix we present explicit formulas for the coefficients $a_{01}$, $a_{12}$ of the Jacobian $J_1$~\eq{J1C2} in the domain $C_2$.

\begin{align}
&a_{01} = \sum_{j=0}^3 \sum_{i=0}^j c_{i, j-i} F^i(u_1) E^{j-i}(u_1), \label{a01C2} \\
&c_{00} = - 2 k^2 \sin^2 u_1 \cos^2 u_1 \sqrt{1-k^2 \sin^2 u_1}, \nonumber \\
&c_{01} = \cos u_1 \sin u_1 (4+k^2(1-6\sin^2 u_1)), \nonumber \\
&c_{02} = - 3 \sqrt{1-k^2 \sin^2 u_1} (1-2\sin^2 u_1), \nonumber\\
&c_{11} = (2-k^2) \sqrt{1-k^2 \sin^2 u_1}, \nonumber\\
&c_{20} = (1-k^2) \sqrt{1-k^2 \sin^2 u_1} (1-2\sin^2 u_1), \nonumber\\
&c_{03} = - 2 \cos u_1 \sin u_1, \nonumber\\
&c_{12} = (2-k^2) \cos u_1 \sin u_1, \nonumber\\
&c_{21} = 2(1-k^2) \cos u_1 \sin u_1, \nonumber\\
&c_{30} = -(1-k^2)(2-k^2) \cos u_1 \sin u_1, \nonumber
\end{align}

\begin{align}
&a_{21} = \sum_{j=0}^5 \sum_{i=0}^j d_{i,j-i} F^i(u_1) E^{j-i}(u_1), \label{a21C2}\\
&d_{00} = - 6 k^7 \sin^3 u_1 \cos ^3 u_1, \nonumber \\
&d_{01} = 20 k^5 \sin^2 u_1 \cos^2 u_1  \sqrt{1-k^2 \sin^2 u_1}, \nonumber \\
&d_{10} = - 6 k^5 (2-k^2) \sin^2 u_1 \cos^2 u_1 \sqrt{1-k^2 \sin^2 u_1}, \nonumber \\
&d_{02} = - 2 k^3 \sin u_1 \cos u_1(12-k^2(1 + 10 \sin^2 u_1)), \nonumber \\
&d_{11} = \frac{k^3}{2} \sin u_1 \cos u_1(32 - 8 k^2(1+6\sin^2 u_1) + 3 k^4(1+8\sin^2 u_1)), \nonumber \\
&d_{20} = \frac{k^3}{2} \sin u_1 \cos u_1(16 + 3 k^6 \sin^2 u_1 + k^4 (9-8 \sin^2 u_1)-4k^2(7-2 \sin^2 u_1)), \nonumber \\
&d_{03} = 8 k (2-k^2) \sqrt{1-k^2 \sin^2 u_1}, \nonumber \\
&d_{12} = - \frac{k}{2}(32 - 32 k^2 + 15 k^4) \sqrt{1-k^2 \sin^2 u_1}, \nonumber \\
&d_{21} = - \frac{k}{2}(32 - 48 k^2 + 10k^4+3k^6) \sqrt{1-k^2 \sin^2 u_1}, \nonumber \\
&d_{30} = \frac k2 (32 - 64 k^2 + 41 k^4 - 9 k^6) \sqrt{1-k^2 \sin^2 u_1}, \nonumber \\
&d_{04} = -10k^3 \sin u_1 \cos u_1, \nonumber  \\
&d_{13} = 12 k^3(2-k^2) \sin u_1 \cos u_1,\nonumber  \\
&d_{22} = - \frac 32 k^3(8-8k^2+3k^4) \sin u_1 \cos u_1, \nonumber \\
&d_{31} = - \frac{k^3}{2}(16-24k^2+6k^4+k^6) \sin u_1 \cos u_1, \nonumber \\
&d_{40} = \frac 32 k^3(1-k^2)(2-k^2)^2 \sin u_1 \cos u_1, \nonumber \\
&d_{05} = 4 k \sqrt{1-k^2 \sin^2 u_1}, \nonumber \\
&d_{14} = - 6 k (2-k^2) \sqrt{1-k^2 \sin^2 u_1}, \nonumber \\
&d_{23} = k (8-8k^2 +3k^4) \sqrt{1-k^2 \sin^2 u_1}, \nonumber \\
&d_{32} = \frac k2  (16 -24k^2+6k^4+k^6)\sqrt{1-k^2 \sin^2 u_1}, \nonumber \\
&d_{41} = -3k(1-k^2)(2-k^2)^2 \sqrt{1-k^2 \sin^2 u_1}, \nonumber \\
&d_{50} = \frac k2 (1-k^2)(2-k^2)^3 \sqrt{1-k^2 \sin^2 u_1}.\nonumber 
\end{align}
Here $F(u_1)$ and $E(u_1)$ are elliptic integrals of the first and second kinds~\cite{whit_vatson}.


\begin{thebibliography}{99}

\bibitem{ABB} A. Agrachev, D. Barilari, U. Boscain, {\em A comprehensive introduction to sub-Riemannian geometry}, Cambridge University Press, 2019.

\bibitem{cime}
Agrachev~A.A.,  
Geometry of optimal control problems and Hamiltonian systems. In: Nonlinear and Optimal Control Theory,  Lecture Notes in Mathematics. CIME, 1932, Springer Verlag,  2008, 1--59.

\bibitem{cdc99}
A.A. Agrachev and Yu. L. Sachkov, An Intrinsic Approach to the Control of Rolling Bodies,
Proceedings of the 38-th IEEE Conference on Decision and Control, vol. 1, Phoenix, Arizona,
USA, December 7--10, 1999, 431--435.

\bibitem{notes}
Agrachev A.A., Sachkov Yu.L., \textit{Control theory from the geometric viewpoint},  Springer, 2004.

\bibitem{agr_sar}
A.\,A.~Agrachev, A.\,A.~Sarychev,
Filtration of a Lie algebra of vector fields and nilpotent
approximation of control systems,
{\em Dokl. Akad. Nauk SSSR}, {\bf 295} (1987),
English transl. in {\em Soviet Math. Dokl.}, {\bf 36} (1988),
104--108.

\bibitem{meneses_perez}
A. Anzaldo-Meneses, F. Monroy-P\'erez,
Charges in magnetic fields and sub-Riemannian geodesics, In: Contemporary trends in nonlinear geometric control theory and its applications, A. Anzaldo-Meneses, B. Bonnard, J.P. Gauthier, F. Monroy-P\'erez Eds., World Scientific, 2002, pp. 183--202.

\bibitem{ard_hak}
A.Ardentov, E. Hakavuori,
Cut time in the sub-Riemannian problem on the
Cartan group, {\em in preparation}.



\bibitem{engel}
Ardentov A.A., Sachkov Yu.L., Extremal trajectories in nilpotent sub-Riemannian problem on the Engel group, \textit{Matematicheskii Sbornik}, 2011, Vol.  202, No. 11,  31--54 (in Russian). 

\bibitem{engel_conj}
A.A. Ardentov, Yu. L. Sachkov,
Conjugate points in nilpotent sub-Riemannian problem on the Engel group,
{\em Journal of Mathematical Sciences}, 
Vol. 195, No. 3, December, 2013, 369--390.

\bibitem{engel_cut}
A.A. Ardentov, Yu. L. Sachkov,
Cut time in sub-Riemannian problem on Engel group,
{\em ESAIM: Control, Optimisation and Calculus of Variations}, 21 (2015),  958--988.

\bibitem{engel_synth}
A.A. Ardentov, Yu. L. Sachkov,
Maxwell Strata and Cut Locus in the Sub-Riemannian Problem on the Engel Group, Regular and Chaotic Dynamics, December 2017, Vol. 22, Issue 8, pp 909--936.

\bibitem{bellaiche}
Bellaiche A.
The tangent space in sub-Riemannian geometry//
In: Sub-Riemannian geometry, A.~Bellaiche and J.-J.~Risler,
Eds., Birkh\"auser, Basel, Swizerland, 1996, 1--78.

\bibitem{berzub1}
V. N. Berestovskii, I. A. Zubareva, Geodesics and shortest arcs of a special sub-Riemannian metric on the Lie group SO(3), Siberian Math. J., 56:4 (2015), 601--611  

\bibitem{berzub2}
V. N. Berestovskiǐ, I. A. Zubareva, Geodesics and shortest arcs of a special sub-Riemannian metric on the Lie group SL(2), Siberian Math. J., 57:3 (2016), 411--424

\bibitem{borisov}
Bizyaev I. A.,  Borisov A. V.,  Kilin A. A.,  Mamaev I. S., Integrability and Nonintegrability of Sub-Riemannian Geodesic Flows on Carnot Groups, Regular and Chaotic Dynamics, 2016, vol. 21, no. 6, pp. 759-774

\bibitem{boscainSO3}
Boscain U., Rossi F., Invariant Carnot-Caratheodory metrics on $S^3$, $\SO(3)$, $\SL(2)$ and Lens Spaces. SIAM, Journal on Control and Optimization, 2008, 47, 1851-1878. 

\bibitem{brock}
Brockett R.
Control theory and singular Riemannian geometry//
In: New Directions in Applied Mathematics, (P.~Hilton and G.~Young eds.),
Springer-Verlag, New York, 11--27.

\bibitem{brock_dai}
Brockett R., Dai L.
Non-holonomic kinematics and the role of el\-lip\-tic functions in
constructive controllability//
In: Nonholonomic Motion Planning, Z.~Li and J.~Canny, Eds.,
Kluwer, Boston, 1993, 1--21.

\bibitem{cut_sh2}
Ya. Butt, A.I. Bhatti, Yu. Sachkov,
Cut Locus and Optimal Synthesis in Sub-Riemannian Problem on the Lie
Group $\SH(2)$,
{\em Journal of Dynamical and Control Systems},  23 (2017), 155--195

\bibitem{cartan}
E.~Cartan,
L\`es systemes de Pfaff a cinque variables et l\`es equations aux
derivees partielles du second ordre,
{\em Ann. Sci. \`Ecole Normale} {\bf 27} (1910), 3: 109--192.

\bibitem{euler}
L.Euler,
{\em Methodus inveniendi lineas curvas maximi minimive proprietate gaudentes, sive Solutio problematis isoperimitrici latissimo sensu accepti}, Lausanne, Geneva, 1744.

\bibitem{jurd_plate-ball}
V.Jurdjevic,
The geometry of the plate-ball problem,
{\em Arch. Rat. Mech. Anal.}, v. 124 (1993), 305--328.

\bibitem{jurd_book}
Jurdjevic V.
Geometric Control Theory,
Cambridge University Press, 1997.

\bibitem{laumond}
J.P. Laumond, Nonholonomic motion planning for mobile robots, LAAS Report 98211, May
1998, LAAS-CNRS, Toulouse, France.

\bibitem{LS}
L.Lokutsievskii, Yu. Sachkov,
Liouville nonintegrability of sub-Ri\-e\-man\-nian problems on free Carnot groups of step 4,
Sbornik: Mathematics, 209:5 (2018), 74–-119.

\bibitem{love}
A.E.H.Love,
{\em A Treatise on the Mathematical Theory of Elasticity},
4th ed., New York: Dover, 1927.

\bibitem{marigo_bicchi}
A. Marigo and A. Bicchi, Rolling bodies with regular surface: the holonomic case, In Differential
geometry and control: Summer Research Institute on Differential Geometry and Control,
June 29--July 19, 1997, Univ. Colorado, Boulder, G. Ferreyra et al., eds., Proc. Sympos. Pure
Math. 64, Amer. Math. Soc., Providence, RI, 1999, 241--256.

\bibitem{mont}
R. Montgomery, 
{\em A Tour of Subriemannian Geometries, Their Geodesics
and Applications}. 
American Mathematical Society (2002).

\bibitem{myasnich}
O. Myasnichenko. Nilpotent (3, 6) sub-Riemannian problem, Journal of
Dynamical and Control Systems, 8:4 (2002), pp. 573--597

\bibitem{PGBM}
Pontryagin~L.S., Boltayanskii~V.G., Gamkrelidze~R.V.,Mishchenko~E.F., The mathematical theory of optimal processes, Wiley (1962) (Translated from Russian).

\bibitem{postnikov}
M. M. Postnikov, Lie groups and Lie algebras (in Russian), Nauka, Moscow, 1982.

\bibitem{dido_exp}
Sachkov~Yu.L., Exponential map in the generalized Dido problem (in Russian). Mat. Sb., 2003, 194:9, 63--90. 

\bibitem{sym}
Sachkov~Yu.L.,
Symmetries of Flat Rank Two Distributions and
Sub-Riemannian Structures,
{\em Transactions of the American Mathematical Society},
{\bf 356} (2004), 2:  457--494.






\bibitem{max1}
Sachkov~Yu.L.,
Discrete symmetries in the generalized Dido problem,
Sbornik: Mathematics (2006),197(2): 95--116.

\bibitem{max2}
Sachkov~Yu.L.,
The Maxwell set in the generalized Dido problem,
Sbornik: Mathematics (2006),197(4): 123--150.
               
\bibitem{max3}
Sachkov~Yu.L.,
Complete description of the Maxwell strata in the generalized Dido problem,
Sbornik: Mathematics (2006),197(6): 111-160.

\bibitem{el_conj}	
Sachkov~Yu.L., Conjugate points in Euler's elastic problem // Journal of Dynamical and Control Systems (Springer, New York), Vol 14 (2008), No. 3, 409--439.


\bibitem{cut_sre1}
Sachkov~Yu.L.,
 Conjugate and cut time in sub-Riemannian problem on the group of motions of a 
plane, {\em ESAIM: COCV}, 16 (2010), 1018--1039.

\bibitem{sar}
Sarychev~A.V.,
The index of second variation of a control system, Matem. Sbornik 113 (1980), 464--486. English transl.  in: Math. USSR Sbornik 41 (1982), 383--401.

\bibitem{vendit_laum_oriolo}
M. Vendittelli, J.P. Laumond, and G. Oriolo, Steering nonholonomic systems via nilpotent
approximations: The general two-trailer system, IEEE International Conference on Robotics
and Automation, May 10--15, Detroit, MI, 1999.

\bibitem{versh_gersh}
A.M.~Vershik, V.Y.~Gershkovich, Nonholonomic Dynamical Systems. Geometry of
distributions and variational problems. (Russian) In: {\em Itogi Nauki i
Tekhniki:
Sovremennye Problemy Matematiki, Fundamental'nyje Napravleniya}, Vol.~16,
VINITI,
Moscow, 1987, 5--85. (English translation in: {\em Encyclopedia of Math. Sci.}
{\bf
16}, Dynamical Systems 7, Springer Verlag.)

\bibitem{whit_vatson}
Whittaker~E.T., Watson~G.N.,
A Course of Modern Analysis, 
Cambridge University Press, 1927.

\end{thebibliography}
\end{document}